\documentclass[10-pt]{article}
\usepackage{fancyhdr}
\usepackage[british]{babel}

\usepackage[letterpaper, top=4cm,bottom=3.7cm,left=4cm,right=4cm,marginparwidth=2cm]{geometry}

\usepackage{amsmath}
\usepackage{amssymb}
\usepackage{amsthm}
\usepackage{graphicx}
\usepackage{xspace}
\usepackage[T1]{fontenc}
\usepackage[utf8]{inputenc}
\usepackage{float}
\usepackage{caption}
\usepackage{quiver}
\usepackage{yhmath}
\usepackage[nottoc,numbib]{tocbibind}
\usepackage{scalerel}
\usepackage{footmisc}
\usepackage[useregional]{datetime2}
\DTMlangsetup[en-GB]{showdayofmonth=false}

\usepackage[maxbibnames=99, style=numeric,backend=biber, url=false, doi=false, isbn=false, date=year]{biblatex}

\usepackage{filecontents}
\begin{filecontents}{bib.bib}

@article{achar_calculations_2019,
	title = {Calculations with graded perverse-coherent sheaves},
	volume = {70},
	issn = {0033-5606},
	url = {https://doi.org/10.1093/qmath/haz016},
	doi = {10.1093/qmath/haz016},
	pages = {1327--1352},
	number = {4},
	journaltitle = {The Quarterly Journal of Mathematics},
	shortjournal = {The Quarterly Journal of Mathematics},
	author = {Achar, Pramod N and Hardesty, William D},
	urldate = {2024-05-11},
	date = {2019-12-18},
}

@misc{zhou_fourier_2024,
      title={The Fourier Transform and Characteristic Cycles of Monodromic $\ell$-adic Sheaves}, 
      author={Zhou, Tong},
      date={2024},
      eprint={2404.01621},
      archivePrefix={arXiv},
      primaryClass={math}
}

@incollection{ben-zvi_betti_2018,
	location = {Providence, Rhode Island},
	title = {Betti Geometric Langlands},
	isbn = {9781470427542 9781470435776 9781470435783},
	series = {Proceedings of symposia in pure mathematics},
	shorttitle = {Algebraic geometry},
	booktitle = {Algebraic geometry: Salt Lake City 2015},
	publisher = {American Mathematical Society},
	author = {Ben-Zvi, David and Nadler, David},
	date = {2018},
	keywords = {Conference papers and proceedings, Congresses, Geometry, Algebraic},
}

@misc{arinkin_stack_2022,
      title={The stack of local systems with restricted variation and geometric Langlands theory with nilpotent singular support}, 
      author={Dima Arinkin and Dennis Gaitsgory and David Kazhdan and Sam Raskin and Nick Rozenblyum and Yakov Varshavsky},
      year={2022},
      eprint={2010.01906},
      archivePrefix={arXiv},
      primaryClass={math.AG}
}

@mvbook{deligne_singularites_2007,
	location = {Paris},
	title = {Singularités irrégulières :  correspondance et documents},
	isbn = {9782856292419},
	series = {Documents mathématiques},
	shorttitle = {Singularités irrégulières},
	volumes = {1},
	pagetotal = {188},
	number = {5},
	publisher = {Société mathématique de France},
	author = {Deligne, Pierre and Malgrange, Bernard and Ramis, Jean-Pierre},
	date = {2007},
	note = {ill., couv. ill. 25 cm. Notes bibliogr.},
	keywords = {Équations différentielles linéaires},
}

@misc{zhou_character_2024,
 title = {Character Sheaves on Reductive Lie Algebras in Positive
Characteristic, in preparation}, 
 author = {Zhou, Tong},
 date = {2024},
}

@article{hansen_relative_2023,
	title = {Relative perversity},
	volume = {3},
	issn = {2692-3688},
	url = {https://www.ams.org/cams/2023-03-09/S2692-3688-2023-00021-9/},
	doi = {10.1090/cams/21},
	pages = {631--668},
	number = {9},
	journaltitle = {Communications of the American Mathematical Society},
	shortjournal = {Comm. Amer. Math. Soc.},
	author = {Hansen, David and Scholze, Peter},
	urldate = {2024-03-20},
	date = {2023},
	langid = {english},
}

@article{steinberg_torsion_1975,
	title = {Torsion in reductive groups},
	volume = {15},
	issn = {0001-8708},
	url = {https://www.sciencedirect.com/science/article/pii/0001870875901255},
	doi = {10.1016/0001-8708(75)90125-5},
	pages = {63--92},
	number = {1},
	journaltitle = {Advances in Mathematics},
	shortjournal = {Advances in Mathematics},
	author = {Steinberg, Robert},
	urldate = {2024-03-20},
	date = {1975-01-01},
}

@article{drinfeld_compact_2015,
	title = {Compact generation of the category of $\mathrm{D}$-modules on the stack of $G$-bundles on a curve},
	volume = {3},
	issn = {2168-0949},
	url = {https://www.intlpress.com/site/pub/pages/journals/items/cjm/content/vols/0003/0001/a002/abstract.php},
	doi = {10.4310/CJM.2015.v3.n1.a2},
	pages = {19--125},
	number = {1},
	journaltitle = {Cambridge Journal of Mathematics},
	shortjournal = {Cambridge J. Math.},
	author = {Drinfeld, Vladimir and Gaitsgory, Dennis},
	urldate = {2024-03-19},
	date = {2015},
}

@misc{psaromiligkos_character_2023,
      title={Character sheaves in characteristic $p$ have nilpotent singular support}, 
      author={Kostas Psaromiligkos},
      year={2023},
      eprint={2211.11126},
      archivePrefix={arXiv},
      primaryClass={math.RT}
}

@incollection{ginzburg_induction_1993,
	title = {Induction and restriction of character sheaves},
	volume = {16.1},
	series = {{ADVSOV}},
	pages = {149--167},
	booktitle = {I. M. Gelfand Seminar, Part 1},
	publisher = {American Mathematical Society},
	author = {Ginzburg, Victor},
	date = {1993-08-18},
}

@incollection{jantzen_nilpotent_2004,
 location = {Boston, {MA}},
 title = {Nilpotent Orbits in Representation Theory},
 isbn = {9780817681920},
 url = {https://doi.org/10.1007/978-0-8176-8192-0_1},
 series = {Progress in Mathematics},
 pages = {1--211},
 booktitle = {Lie Theory: Lie Algebras and Representations},
 publisher = {Birkhäuser},
 author = {Jantzen, Jens},
 editor = {Jantzen, Jens Carsten and Neeb, Karl-Hermann and Anker, Jean-Philippe and Orsted, Bent},
 urldate = {2024-03-09},
 date = {2004},
 langid = {english},
 doi = {10.1007/978-0-8176-8192-0_1},
 keywords = {Algebraic Group, Irreducible Component, Maximal Torus, Nilpotent Element, Parabolic Subgroup},
}

@article{bezrukavnikov_parabolic_2021,
 title = {On parabolic restriction of perverse sheaves},
 volume = {57},
 issn = {0034-5318},
 url = {https://cris.iucc.ac.il/en/publications/on-parabolic-restriction-of-perverse-sheaves},
 doi = {10.4171/PRIMS/57-3-12},
 pages = {1089--1107},
 number = {3},
 journaltitle = {Publications of the Research Institute for Mathematical Sciences},
 shortjournal = {publications of the research institute for mathematical sciences},
 author = {Bezrukavnikov, Roman and Din, Alexander Yom},
 urldate = {2024-03-07},
 date = {2021},
}

@book{letellier_fourier_2005,
 location = {Berlin, Heidelberg},
 title = {Fourier Transforms of Invariant Functions on Finite Reductive Lie Algebras},
 volume = {1859},
 isbn = {9783540240204 9783540315612},
 url = {http://link.springer.com/10.1007/b104209},
 series = {Lecture Notes in Mathematics},
 publisher = {Springer},
 author = {Letellier, Emmanuel},
 urldate = {2024-03-06},
 date = {2005},
 doi = {10.1007/b104209},
 keywords = {Deligne-Lusztig induction, Fourier transforms, Lie Algebras, Lie algebra, character-sheaves, trigonometric sums},
}

@article{lusztig_induced_1979,
 title = {Induced Unipotent Classes},
 volume = {s2-19},
 issn = {0024-6107},
 url = {https://doi.org/10.1112/jlms/s2-19.1.41},
 doi = {10.1112/jlms/s2-19.1.41},
 pages = {41--52},
 number = {1},
 journaltitle = {Journal of the London Mathematical Society},
 shortjournal = {Journal of the London Mathematical Society},
 author = {Lusztig, George and Spaltenstein, Nicolas},
 urldate = {2024-03-06},
 date = {1979-02-01},
}

@misc{zhou_character_2024,
 title = {Character Sheaves on Reductive Lie Algebras in Positive
Characteristic}, 
 author = {Zhou, Tong},
 date = {2024},
 eprint={2404.19210},
 archivePrefix={arXiv},
 primaryClass={math},
}

@misc{zhou_stability_2023,
 title = {On the stability of vanishing cycles of \etale sheaves in positive characteristic},
 author = {Zhou, Tong},
 date = {2023},
 eprint = {2307.00416},
 archivePrefix={arXiv},
 primaryClass={math},
}

@inproceedings{lusztig_cuspidal_1995,
 location = {Banff, Alberta, Canada},
 title = {Cuspidal local systems and graded Hecke algebras, {II}},
 volume = {16},
 eventtitle = {Canadian Mathematical Society Annual Seminar, June 15-24, 1994},
 pages = {217--275},
 booktitle = {Representations of groups},
 publisher = {American Mathematical Society},
 author = {Lusztig, George},
 date = {1995},
 keywords = {Cohomology Complex, Intersection Cohomology, Intersection Cohomology Complex, Reductive Group},
}

@article{mirkovic_characteristic_1988,
 title = {Characteristic varieties of character sheaves},
 volume = {93},
 issn = {1432-1297},
 url = {https://doi.org/10.1007/BF01394339},
 doi = {10.1007/BF01394339},
 pages = {405--418},
 number = {2},
 journaltitle = {Inventiones mathematicae},
 shortjournal = {Invent Math},
 author = {Mirković, Ivan and Vilonen, Kari},
 urldate = {2023-09-08},
 date = {1988-06-01},
 langid = {english},
 keywords = {Character Sheave, Characteristic Variety},
}

@article{lusztig_intersection_1984,
 title = {Intersection cohomology complexes on a reductive group},
 volume = {75},
 issn = {1432-1297},
 url = {https://doi.org/10.1007/BF01388564},
 doi = {10.1007/BF01388564},
 pages = {205--272},
 number = {2},
 journaltitle = {Inventiones mathematicae},
 shortjournal = {Invent Math},
 author = {Lusztig, George},
 urldate = {2024-03-06},
 date = {1984-06-01},
 langid = {english},
 keywords = {Cohomology Complex, Intersection Cohomology, Intersection Cohomology Complex, Reductive Group},
}

@article{lusztig_character_1985,
 title = {Character sheaves I-V},
 volume = {56, 57, 59, 61},
 journaltitle = {Advances in Mathematics},
 shortjournal = {Advances in Mathematics},
 author = {Lusztig, George},
 date = {1985},
}

@inproceedings{lusztig_fourier_1987,
 location = {Berlin, Heidelberg},
 title = {Fourier transforms on a semisimple Lie algebra over Fq},
 isbn = {9783540478348},
 doi = {10.1007/BFb0079237},
 series = {Lecture Notes in Mathematics},
 pages = {177--188},
 booktitle = {Algebraic Groups Utrecht 1986},
 publisher = {Springer},
 author = {Lusztig, George},
 editor = {Cohen, Arjeh M. and Hesselink, Wim H. and van der Kallen, Wilberd L. J. and Strooker, Jan R.},
 date = {1987},
 langid = {english},
 keywords = {Direct Summand, Levi Subgroup, Nilpotent Element, Nilpotent Orbit, Parabolic Subgroup},
}

@article{mirkovic_character_2004,
 title = {Character Sheaves on Reductive Lie Algebras},
 volume = {4},
 issn = {16093321, 16094514},
 url = {http://www.ams.org/distribution/mmj/vol4-4-2004/mirkovic.pdf},
 doi = {10.17323/1609-4514-2004-4-4-897-910},
 pages = {897--910},
 number = {4},
 journaltitle = {Moscow Mathematical Journal},
 shortjournal = {{MMJ}},
 author = {Mirkovic, Ivan},
 urldate = {2023-09-08},
 date = {2004},
}

@incollection{illusie_autour_1994,
     author = {Illusie, Luc},
     title = {Expos\'e {I~:} {Autour} du th\'eor\`eme de monodromie locale},
     booktitle = {P\'eriodes $p$-adiques - S\'eminaire de Bures, 1988},
     editor = {Fontaine, Jean-Marc},
     series = {Ast\'erisque},
     note = {talk:1},
     pages = {9--57},
     publisher = {Soci\'et\'e math\'ematique de France},
     number = {223},
     year = {1994},
     mrnumber = {1293970},
     zbl = {0837.14013},
     language = {fr},
     url = {http://www.numdam.org/item/AST_1994__223__9_0/}
}

@article{saito_characteristic_ext_2017,
 title = {Characteristic cycle of the external product of constructible sheaves},
 volume = {154},
 issn = {1432-1785},
 url = {https://doi.org/10.1007/s00229-016-0913-4},
 doi = {10.1007/s00229-016-0913-4},
 pages = {1--12},
 number = {1},
 journaltitle = {manuscripta mathematica},
 shortjournal = {manuscripta math.},
 author = {Saito, Takeshi},
 urldate = {2024-02-28},
 date = {2017-09-01},
 langid = {english},
 keywords = {Primary 14F20, Secondary 14C17},
}

@article{katz_transformation_1985,
 title = {Transformation de Fourier et majoration de sommes exponentielles},
 volume = {62},
 issn = {1618-1913},
 url = {http://www.numdam.org/item/PMIHES_1985__62__145_0/},
 pages = {145--202},
 journaltitle = {Publications Mathématiques de l'{IHÉS}},
 author = {Katz, Nicholas and Laumon, Gérard},
 urldate = {2024-02-19},
 date = {1985},
 langid = {english},
}

@article{zheng_six_2015,
 title = {Six operations and Lefschetz-Verdier formula for Deligne-Mumford stacks},
 volume = {58},
 issn = {1869-1862},
 url = {https://doi.org/10.1007/s11425-015-4970-z},
 doi = {10.1007/s11425-015-4970-z},
 pages = {565--632},
 number = {3},
 journaltitle = {Science China Mathematics},
 shortjournal = {Sci. China Math.},
 author = {Zheng, Weizhe},
 urldate = {2024-01-31},
 date = {2015-03-01},
 langid = {english},
 keywords = {14A20, 14F20, 18D05, Deligne-Mumford stack, Lefschetz-Verdier formula, pseudofunctor, six operations, étale cohomology},
}

@article{weil_II,
 title = {La conjecture de Weil : {II}},
 volume = {52},
 issn = {1618-1913},
 url = {http://www.numdam.org/item/PMIHES_1980__52__137_0/},
 shorttitle = {La conjecture de Weil},
 pages = {137--252},
 journaltitle = {Publications Mathématiques de l'{IHÉS}},
 author = {Deligne, Pierre},
 urldate = {2024-01-25},
 date = {1980},
 langid = {english},
}

@book{achar_perverse_2021,
 location = {Providence, Rhode Island},
 title = {Perverse Sheaves and Applications to Representation Theory},
 volume = {258},
 isbn = {978-1-4704-5597-2},
 url = {https://bookstore.ams.org/surv-258/},
 series = {Mathematical Surveys and Monographs},
 pagetotal = {562},
 publisher = {American Mathematical Society},
 author = {Achar, Pramod},
 urldate = {2024-01-23},
 date = {2021},
}

@misc{barrett_singular_2023,
 title = {The singular support of an $\ell$-adic sheaf},
 url = {http://arxiv.org/abs/2309.02587},
 doi = {10.48550/arXiv.2309.02587},
 number = {{arXiv}:2309.02587},
 publisher = {{arXiv}},
 author = {Barrett, Owen},
 urldate = {2023-10-01},
 date = {2023-09-26},
 eprinttype = {arxiv},
 eprint = {2309.02587 [math]},
 keywords = {Mathematics - Algebraic Geometry},
}

@article{saito_wild_2017,
 title = {Wild ramification and the cotangent bundle},
 volume = {26},
 pages = {399-473},
 journaltitle = {Journal of Algebraic Geometry},
 shortjournal = {J. Algebraic Geom.},
 author = {Saito, Takeshi},
 year = {2017},
}

@article{yatagawa_characteristic_2020,
	title = {Characteristic cycle of a rank one sheaf and ramification theory},
	volume = {29},
	issn = {1056-3911, 1534-7486},
	url = {https://www.ams.org/jag/2020-29-03/S1056-3911-2020-00758-5/},
	doi = {10.1090/jag/758},
	pages = {471--545},
	number = {3},
	journaltitle = {Journal of Algebraic Geometry},
	shortjournal = {J. Algebraic Geom.},
	author = {Yatagawa, Yuri},
	urldate = {2025-10-13},
	date = {2020-07},
	langid = {english},
}

@article{kato_wild_2018,
 title = {Wild ramification and restrictions to curves},
 volume = {29},
 issn = {0129-167X},
 url = {https://worldscientific.com/doi/abs/10.1142/S0129167X18500520},
 doi = {10.1142/S0129167X18500520},
 pages = {1850052},
 number = {8},
 journaltitle = {International Journal of Mathematics},
 shortjournal = {Int. J. Math.},
 author = {Kato, Hiroki},
 urldate = {2024-01-22},
 date = {2018-07},
 keywords = {Swan conductor, Wild ramification, restrictions to curves, étale cohomology},
}

@article{umezaki_characteristic_2020,
 title = {Characteristic class and the $\epsilon$-factor of an étale sheaf},
 volume = {373},
 issn = {0002-9947, 1088-6850},
 url = {https://www.ams.org/tran/2020-373-10/S0002-9947-2020-08187-2/},
 doi = {10.1090/tran/8187},
 number = {10},
 urldate = {2023-09-08},
 journal = {Transactions of the American Mathematical Society},
 author = {Umezaki, Naoya and Yang, Enlin and Zhao, Yigeng},
 month = oct,
 year = {2020},
 pages = {6887--6927},
}

@article{kato_wild_2021,
 title = {Wild ramification, nearby cycle complexes, and characteristic cycles of $\ell$-adic sheaves},
 volume = {15},
 issn = {1944-7833},
 url = {https://msp.org/ant/2021/15-6/p05.xhtml},
 doi = {10.2140/ant.2021.15.1523},
 pages = {1523--1564},
 number = {6},
 journaltitle = {Algebra \& Number Theory},
 author = {Kato, Hiroki},
 urldate = {2024-01-22},
 date = {2021-10-16},
}

@article{laumon_transformation_1987,
 title = {Transformation de Fourier, constantes d'équations fonctionnelles et conjecture de Weil},
 volume = {65},
 issn = {1618-1913},
 url = {http://www.numdam.org/item/PMIHES_1987__65__131_0/},
 pages = {131--210},
 journaltitle = {Publications Mathématiques de l'{IHÉS}},
 author = {Laumon, Gérard},
 urldate = {2023-09-09},
 date = {1987},
 langid = {english},
}

@book{swanson_integral_2006,
 title = {Integral Closure of Ideals, Rings, and Modules},
 isbn = {9780521688604},
 url = {https://www.cambridge.org/us/universitypress/subjects/mathematics/algebra/integral-closure-ideals-rings-an$D$-modules?format=PB&isbn=9780521688604},
 series = {London Mathematical Society Lecture Note Series},
 pagetotal = {448},
 number = {336},
 publisher = {Cambridge University Press},
 author = {Swanson, Irena and Huneke, Craig},
 date = {2006-10},
}

@article{northcott_notion_1957,
 title = {On the notion of a first neighbourhood ring with an application to the {AF} + {B$\Phi$} theorem},
 volume = {53},
 issn = {1469-8064, 0305-0041},
 url = {https://www.cambridge.org/core/journals/mathematical-proceedings-of-the-cambridge-philosophical-society/article/on-the-notion-of-a-first-neighbourhood-ring-with-an-application-to-the-af-b-theorem/7E79478898E41C51283A8DAA4A665933},
 doi = {10.1017/S0305004100031972},
 pages = {43--56},
 number = {1},
 journaltitle = {Mathematical Proceedings of the Cambridge Philosophical Society},
 author = {Northcott, Douglas},
 urldate = {2023-12-15},
 date = {1957-01},
 langid = {english},
}

@book{kunz_regular_1988,
 title = {Regular Differential Forms},
 volume = {79},
 isbn = {9780821850855 9780821854167 9780821876671},
 url = {https://www.ams.org/conm/079/},
 series = {Contemporary Mathematics},
 publisher = {American Mathematical Society},
 author = {Kunz, Ernst and Waldi, Rolf},
 urldate = {2023-12-15},
 date = {1988},
 langid = {english},
 doi = {10.1090/conm/079},
}

@book{matsumura_commutative_1980,
 title = {Commutative Algebra},
 isbn = {9780805370263},
 pagetotal = {338},
 publisher = {Benjamin/Cummings Publishing Company},
 author = {Matsumura, Hideyuki},
 date = {1980},
 langid = {english},
}

@misc{Stacks,
    shorthand = {Stacks},
    title = {The Stacks Project, https://stacks.math.columbia.edu/},
    date = {2024},
}

@article{EGAIV,
    shorthand = {EGAIV},
 title = {Éléments de géométrie algébrique : {IV}. Étude locale des schémas et des morphismes de schémas, Quatrième partie},
 volume = {32},
 issn = {1618-1913},
 url = {http://www.numdam.org/item/PMIHES_1967__32__5_0/},
 shorttitle = {Éléments de géométrie algébrique},
 pages = {5--361},
 journaltitle = {Publications Mathématiques de l'{IHÉS}},
 author = {Grothendieck, Alexander},
 urldate = {2023-05-24},
 date = {1967},
}

@book{SGA7,
    shorthand = {SGA7},
 location = {Berlin, Heidelberg},
 title = {{SGA}7 {II}, Groupes de Monodromie en Géométrie Algébrique},
 volume = {340},
 isbn = {9783540064336 9783540377641},
 url = {http://link.springer.com/10.1007/BFb0060505},
 series = {Lecture Notes in Mathematics},
 publisher = {Springer},
 author = {Deligne, Pierre and Katz, Nicholas},
 urldate = {2023-05-24},
 date = {1973},
 doi = {10.1007/BFb0060505},
 keywords = {Algebraische Geometrie, Geometrie, Monodromiegruppe, groupes},
}

@book{SGA1,
    shorthand = {SGA1},
 location = {Berlin, Heidelberg},
 title = {Revêtements Etales et Groupe Fondamental},
 volume = {224},
 isbn = {9783540056140 9783540369103},
 url = {http://link.springer.com/10.1007/BFb0058656},
 series = {Lecture Notes in Mathematics},
 publisher = {Springer},
 author = {Grothendieck, Alexander and Raynaud, Michèle},
 urldate = {2023-09-13},
 date = {1971},
 langid = {french},
 doi = {10.1007/BFb0058656},
 keywords = {Geometrie},
}

@book{SGA5,
    shorthand = {SGA5},
 address = {Berlin, Heidelberg, 1977},
 series = {Lecture {Notes} in {Mathematics}},
 title = {Séminaire de {Géométrie} {Algébrique} du {Bois}-{Marie} 1965–66},
 volume = {589},
 isbn = {9783540082484 9783540373599},
 url = {http://link.springer.com/10.1007/BFb0096802},
 urldate = {2023-09-13},
 publisher = {Springer},
 editor = {Illusie, Luc},
 year = {1977},
 doi = {10.1007/BFb0096802},
 keywords = {Geometrie, Kohomologie, L-Funktion, fonctions},
}

@article{abbes_analyse_2009,
	title = {Analyse Micro-Locale $\ell$-Adique en Caractéristique $p>0$ le Cas d’un Trait},
	volume = {45},
	issn = {0034-5318},
	url = {https://ems.press/journals/prims/articles/2125},
	doi = {10.2977/prims/1234361154},
	abstract = {Ahmed Abbes, Takeshi Saito},
	pages = {25--74},
	number = {1},
	journaltitle = {Publications of the Research Institute for Mathematical Sciences},
	author = {Abbes, Ahmed and Saito, Takeshi},
	urldate = {2025-10-06},
	date = {2009-03-31},
	langid = {english},
}

@article{abbes_characteristic_2007,
 title = {The characteristic class and ramification of an $\ell$-adic étale sheaf},
 volume = {168},
 issn = {1432-1297},
 url = {https://doi.org/10.1007/s00222-007-0040-7},
 doi = {10.1007/s00222-007-0040-7},
 language = {en},
 number = {3},
 urldate = {2023-05-24},
 journal = {Inventiones mathematicae},
 author = {Abbes, Ahmed and Saito, Takeshi},
 month = jun,
 year = {2007},
 keywords = {Cartier Divisor, Characteristic Class, Closed Subscheme, Commutative Diagram, Distinguished Triangle},
 pages = {567--612},
}

@article{sawin_geometric_2021,
 title = {A geometric approach to the sup-norm problem for automorphic forms: the case of newforms on {GL2}({Fq}({T})) with squarefree level},
 volume = {123},
 issn = {1460-244X},
 shorttitle = {A geometric approach to the sup-norm problem for automorphic forms},
 url = {https://onlinelibrary.wiley.com/doi/abs/10.1112/plms.12389},
 doi = {10.1112/plms.12389},
 language = {en},
 number = {1},
 urldate = {2023-06-29},
 journal = {Proceedings of the London Mathematical Society},
 author = {Sawin, Will},
 year = {2021},
 keywords = {11F41, 11T55, 14F20, 14G15 (primary)},
 pages = {1--56},
}

@misc{deligne_letter_2015,
 title = {Letter to {A}. {Beilinson}},
 url = {https://math.uchicago.edu/~drinfeld/Deligne's_letter_SingSupp.pdf},
 author = {Pierre Deligne},
 month = jul,
 year = {2015},
}

@article{kato_ramification_2008,
 title = {Ramification {Theory} for {Varieties} over a {Perfect} {Field}},
 volume = {168},
 issn = {0003-486X},
 url = {https://www.jstor.org/stable/40345408},
 number = {1},
 urldate = {2023-06-02},
 journal = {Annals of Mathematics},
 author = {Kato, Kazuya and Saito, Takeshi},
 year = {2008},
 pages = {33--96},
}

@book{kashiwara_sheaves_1990,
 title = {Sheaves on {Manifolds}: {With} a {Short} {History}. «{Les} débuts de la théorie des faisceaux». {By} {Christian} {Houzel}},
 isbn = {9783540518617},
 shorttitle = {Sheaves on {Manifolds}},
 publisher = {Springer Science \& Business Media},
 author = {Kashiwara, Masaki and Schapira, Pierre},
 year = {1990},
 keywords = {Mathematics / Calculus, Mathematics / Geometry / Algebraic, Mathematics / Geometry / General, Mathematics / Mathematical Analysis, Mathematics / Topology},
}

@book{serre_local_1979,
 address = {New York, NY},
 series = {Graduate {Texts} in {Mathematics}},
 title = {Local {Fields}},
 volume = {67},
 isbn = {9781475756753 9781475756739},
 url = {http://link.springer.com/10.1007/978-1-4757-5673-9},
 urldate = {2023-05-24},
 publisher = {Springer},
 author = {Serre, Jean-Pierre},
 year = {1979},
 doi = {10.1007/978-1-4757-5673-9},
 keywords = {Fields, Lokaler Körper, algebra, algebraic geometry, algebraic number field, arithmetic, cohomology, field, finite group, polynomial},
}

@book{schurmann_topology_2003,
 address = {Basel},
 title = {Topology of {Singular} {Spaces} and {Constructible} {Sheaves}},
 isbn = {9783034894241 9783034880619},
 url = {http://link.springer.com/10.1007/978-3-0348-8061-9},
 urldate = {2023-05-24},
 publisher = {Birkhäuser},
 author = {Schürmann, Jörg},
 year = {2003},
 doi = {10.1007/978-3-0348-8061-9},
 keywords = {Algabraic topology, Algebraic geometry, Category theory, Cohomology, Derived category, Localization, Monodromy, Morse theory, Sheaves, Singular spaces, Triangulation, homology},
}

@article{saito_characteristic_2017,
 title = {The characteristic cycle and the singular support of a constructible sheaf},
 volume = {207},
 issn = {1432-1297},
 url = {https://doi.org/10.1007/s00222-016-0675-3},
 doi = {10.1007/s00222-016-0675-3},
 number = {2},
 urldate = {2023-05-24},
 journal = {Inventiones mathematicae},
 author = {Saito, Takeshi},
 month = feb,
 year = {2017},
 pages = {597--695},
}

@article{massey_sebastianithom_2001,
 title = {The {Sebastiani}–{Thom} {Isomorphism} in the {Derived} {Category}},
 volume = {125},
 issn = {1570-5846, 0010-437X},
 url = {https://www.cambridge.org/core/journals/compositio-mathematica/article/sebastianithom-isomorphism-in-the-derived-category/F027E315E39FD4C9D66EC756F41FD8DC},
 doi = {10.1023/A:1002608716514},
 number = {3},
 urldate = {2023-05-24},
 journal = {Compositio Mathematica},
 author = {Massey, David B.},
 month = feb,
 year = {2001},
 keywords = {Milnor fibre, Sebastiani–Thom, derived category, perverse sheaves, vanishing cycles},
 pages = {353--362},
}

@article{kerz_different_2010,
 title = {On different notions of tameness in arithmetic geometry},
 volume = {346},
 issn = {1432-1807},
 url = {https://doi.org/10.1007/s00208-009-0409-6},
 doi = {10.1007/s00208-009-0409-6},
 number = {3},
 urldate = {2023-05-24},
 journal = {Mathematische Annalen},
 author = {Kerz, Moritz and Schmidt, Alexander},
 month = mar,
 year = {2010},
 keywords = {Discrete Valuation Ring, Galois Covering, Inertia Group, Regular Curve, Tame},
 pages = {641--668},
}

@article{katz_calculation_1987,
 title = {On the calculation of some differential galois groups},
 volume = {87},
 issn = {1432-1297},
 url = {https://doi.org/10.1007/BF01389152},
 doi = {10.1007/BF01389152},
 number = {1},
 urldate = {2023-05-24},
 journal = {Inventiones mathematicae},
 author = {Katz, Nicholas},
 month = feb,
 year = {1987},
 keywords = {Differential Galois Group, Galois Group},
 pages = {13--61},
}

@article{katz_local--global_1986,
 title = {Local-to-global extensions of representations of fundamental groups},
 volume = {36},
 issn = {1777-5310},
 url = {http://www.numdam.org/item/?id=AIF_1986__36_4_69_0},
 doi = {10.5802/aif.1069},
 number = {4},
 urldate = {2023-05-24},
 journal = {Annales de l'Institut Fourier},
 author = {Katz, Nicholas},
 year = {1986},
 pages = {69--106},
}

@article{illusie_around_2017,
 title = {Around the {Thom}–{Sebastiani} theorem, with an appendix by {Weizhe} {Zheng}},
 volume = {152},
 issn = {1432-1785},
 url = {https://doi.org/10.1007/s00229-016-0852-0},
 doi = {10.1007/s00229-016-0852-0},
 number = {1},
 urldate = {2023-05-24},
 journal = {manuscripta mathematica},
 author = {Illusie, Luc},
 collaborator = {Zheng, Weizhe},
 month = jan,
 year = {2017},
 keywords = {18F10, 32S30, 32S40, Primary: 14F20, Secondary: 11T23},
 pages = {61--125},
}

@article{fujiwara_hausdorff_2011,
 title = {On {Hausdorff} completions of commutative rings in rigid geometry},
 volume = {332},
 issn = {0021-8693},
 url = {https://www.sciencedirect.com/science/article/pii/S0021869311000524},
 doi = {10.1016/j.jalgebra.2011.02.001},
 number = {1},
 urldate = {2023-05-24},
 journal = {Journal of Algebra},
 author = {Fujiwara, Kazuhiro and Gabber, Ofer and Kato, Fumiharu},
 month = apr,
 year = {2011},
 keywords = {Adic topology, Rigid analytic geometry},
 pages = {293--321},
}

@article{fu_thom-sebastiani_2014,
 title = {A {Thom}-{Sebastiani} theorem in characteristic $p$},
 volume = {21},
 issn = {1073-2780, 1945-001X},
 url = {https://content.intlpress.com/journal/MRL/article/7236},
 doi = {10.4310/MRL.2014.v21.n1.a8},
 number = {1},
 urldate = {2023-05-24},
 journal = {Mathematical Research Letters},
 author = {Fu, Lei},
 month = jul,
 year = {2014},
 pages = {101--119},
}

@article{beilinson_constructible_2016,
 title = {Constructible sheaves are holonomic},
 volume = {22},
 issn = {1420-9020},
 url = {https://doi.org/10.1007/s00029-016-0260-z},
 doi = {10.1007/s00029-016-0260-z},
 number = {4},
 urldate = {2023-05-24},
 journal = {Selecta Mathematica},
 author = {Beilinson, Alexander},
 month = oct,
 year = {2016},
 keywords = {Primary 4F05, Radon transform, Secondary 14C21, Singular support},
 pages = {1797--1819},
}

@book{atiyah_introduction_1969,
 title = {Introduction {To} {Commutative} {Algebra}},
 isbn = {9780201003611},
 publisher = {Basic Books},
 author = {Atiyah, Michael and Macdonald, Ian},
 month = jan,
 year = {1969},
 keywords = {Mathematics / General},
}

@article{BBDG,
    shorthand = {BBDG},
 title = {Faisceaux pervers},
 number = {100},
 journal = {Astérisque},
 author = {Beilinson, Alexander and Bernstein, Joseph and Deligne, Pierre and Gabber, Ofer},
 year = {2008},
}

@article{brylinski_transformations_1986,
 title = {Transformations canoniques, dualité projective, théorie de {Lefschetz}, transformations de {Fourier} et sommes trigonométriques, in {Géométrie} et analyse microlocales},
 url = {http://www.numdam.org/item/?id=AST_1986__140-141__3_0},
 number = {140-141},
 journal = {Astérisque},
 author = {Brylinski, Jean-Luc},
 year = {1986},
 pages = {3--134},
}

@article{verdier_specialisation_1983,
 title = {Spécialisation de faisceaux et monodromie modérée, in {Analyse} et topologie sur les espaces singuliers ({II}-{III}) - 6 - 10 juillet 1981},
 url = {http://www.numdam.org/item/AST_1981__82-83__173_0/},
 number = {101-102},
 journal = {Astérisque},
 author = {Verdier, Jean-Louis},
 year = {1983},
 pages = {332--364},
}

@article{laumon_semi-continuite_1981,
 title = {Semi-continuité du conducteur de {Swan} (d'après {P}. {Deligne}), in {Caractéristique} d'{Euler}-{Poincaré} - {Séminaire} {E}.{N}.{S}. 1978-1979},
 number = {82-83},
 journal = {Astérisque},
 author = {Laumon, Gérard},
 year = {1981},
 pages = {173--219},
}

@article{kashiwara_microlocal_1985,
 title = {Microlocal study of sheaves},
 number = {128},
 journal = {Astérisque},
 author = {Kashiwara, Masaki and Schapira, Pierre},
 year = {1985},
}

@book{kollar_lectures_2007,
 title = {Lectures on {Resolution} of {Singularities} ({AM}-166)},
 isbn = {9780691129235},
 url = {https://www.jstor.org/stable/j.ctt7rptq},
 urldate = {2023-05-24},
 publisher = {Princeton University Press},
 author = {Kollár, János},
 year = {2007},
}

@article{saito_characteristic_2015,
 title = {Characteristic cycle and the {Euler} number of a constructible sheaf on a surface},
 volume = {22},
 issn = {1340-5705},
 url = {https://mathscinet.ams.org/mathscinet-getitem?mr=3329201},
 number = {1},
 urldate = {2023-05-24},
 journal = {The University of Tokyo. Journal of Mathematical Sciences},
 author = {Saito, Takeshi},
 year = {2015},
 mrnumber = {3329201},
 pages = {387--441},
}

@article{orgogozo_modifications_2006,
 title = {Modifications et cycles proches sur une base générale},
 volume = {2006},
 issn = {1073-7928},
 url = {https://doi.org/10.1155/IMRN/2006/25315},
 doi = {10.1155/IMRN/2006/25315},
 urldate = {2023-05-24},
 journal = {International Mathematics Research Notices},
 author = {Orgogozo, Fabrice},
 month = jan,
 year = {2006},
 pages = {25315},
}

@article{lu_duality_2019,
 title = {Duality and nearby cycles over general bases},
 volume = {168},
 issn = {0012-7094, 1547-7398},
 url = {https://projecteuclid.org/journals/duke-mathematical-journal/volume-168/issue-16/Duality-and-nearby-cycles-over-general-bases/10.1215/00127094-2019-0057.full},
 doi = {10.1215/00127094-2019-0057},
 number = {16},
 urldate = {2023-05-24},
 journal = {Duke Mathematical Journal},
 author = {Lu, Qing and Zheng, Weizhe},
 month = nov,
 year = {2019},
 keywords = {14F20, 18F10, 32S30, Duality, local acyclicity, nearby cycles, vanishing cycles, vanishing topos},
 pages = {3135--3213},
}

@book{liu_algebraic_2002,
 address = {Oxford, New York},
 series = {Oxford {Graduate} {Texts} in {Mathematics}},
 title = {Algebraic {Geometry} and {Arithmetic} {Curves}},
 isbn = {9780198502845},
 publisher = {Oxford University Press},
 author = {Liu, Qing},
 month = jul,
 year = {2002},
}

\end{filecontents}

\addbibresource{bib.bib}

\newtheorem{proposition}{Proposition}[section]
\newtheorem{theorem}[proposition]{Theorem}
\newtheorem{lemma}[proposition]{Lemma}
\newtheorem{example}[proposition]{Example}
\newtheorem{definition}[proposition]{Definition}
\newtheorem{remark}[proposition]{Remark}

\newtheorem{corollary}[proposition]{Corollary}
\newtheorem{question}[proposition]{Question}
\newtheorem{radonsetup}[proposition]{Radon setup}
\newtheorem{facts}[proposition]{Facts}

\newtheorem{conjecture}[proposition]{Conjecture}

\newtheorem{terminology}[proposition]{Terminology}

\newcommand\doverline[1]{\ThisStyle{%
  \setbox0=\hbox{$\SavedStyle\overline{#1}$}%
  \ht0=\dimexpr\ht0-.15ex\relax
  \overline{\copy0}%
}}
\newcommand*{\sqihat}{\skew{15}{\hat}{\sqrt{I}}}
\newcommand{\congto}{\xrightarrow{\raisebox{-0.5ex}[0ex][0ex]{$\sim$}}}
\makeatletter
\newcommand{\etale}{\'etal\@ifstar{\'e}{e\xspace}}
\makeatother
\newcommand\blfootnote[1]{%
  \begingroup
  \renewcommand\thefootnote{}\footnote{#1}%
  \addtocounter{footnote}{-1}%
  \endgroup
}

\newcommand{\Addresses}{{
  \bigskip
\noindent\textsc{Department of Mathematics, Massachusetts Institute of Technology, Cambridge, USA,}\par\nopagebreak
\noindent Email: \texttt{tong.g.h.zhou@gmail.com}
  }}

\usepackage[colorlinks=true, allcolors=blue]{hyperref}

\newcommand{\p}{\mathbf{p}}
\newcommand{\q}{\mathbf{q}}

\title{On the stability of vanishing cycles of \etale sheaves in positive characteristic}
\author{Tong Zhou}
\date{}

\begin{document}
\maketitle
\begin{abstract}\blfootnote{February 2026}
In positive characteristic, in contrast to the complex analytic case, vanishing cycles are highly sensitive to test functions (the maps to the henselian traits). We study this dependence and show that on a smooth surface, this dependence is generically only up to a finite jet of the test functions. We conjecture that this continues to hold in higher dimensions. We also study the class of sheaves whose vanishing cycles have the strongest stability. Among other things, we show that tame simple normal crossing sheaves belong to this class, and this class is stable under the Radon transform.
\end{abstract}

\tableofcontents

\section{Introduction}
The microlocal point of view of studying objects on a manifold via constructions on the cotangent bundle was introduced by Mikio Sato in the field of partial differential equations. This idea led to the birth of microlocal analysis and spread out to other fields of mathematics. In \cite{kashiwara_sheaves_1990}, Masaki Kashiwara and Pierre Schapira systematically developed the theory of sheaves on real and complex analytic manifolds from this point of view.\\

Question: what does the theory look like for $\ell$-adic sheaves on schemes in positive characteristic? In this context, among many other works, we point out: Jean-Louis Verdier defined the specialisation (\cite{verdier_specialisation_1983}), which, however, kills wild ramifications, Ahmed Abbes and Takeshi Saito defined the characteristic class (\cite{abbes_characteristic_2007}), and considered the microlocal analysis for \etale sheaves in a local version which goes beyond tame ramifications (\cite{abbes_analyse_2009}), and Kazuya Kato and Saito defined the Swan class (\cite{kato_ramification_2008}). A recent breakthrough is from Alexander Beilinson (\cite{beilinson_constructible_2016}) who successfully defined the singular support ($SS$), and Saito (\cite{saito_characteristic_2017}) defined the characteristic cycle ($CC$) based on that.\\

The starting point of this paper is the following: apart from the $SS$ and the $CC$, another key notion in microlocal sheaf theory is the microstalk. The microstalk plays the same role in microlocal sheaf theory as the stalk in usual sheaf theory. Is there a similar notion in the positive characteristic algebraic context? In the complex analytic context, one definition of the microstalk is via the vanishing cycles functor. We introduce the following notions before discussing further. Let $\mathcal{F}$ be a sheaf (see Conventions) on a complex analytic manifold $X$.

\begin{definition}[transverse test function (analytic)]\label{ttfun}
    A \underline{transverse test function (ttfun)} of $\mathcal{F}$ at a \emph{smooth} point $(x, \xi)$ of $SS\mathcal{F}$ is a complex analytic function $f$ defined on an open neighbourhood U of $x$ such that the following are satisfied:\\
(i) $f(x)=0$;\\
(ii) the graph $\Gamma_{df}$ of the differential of $f$ intersects $SS(
\mathcal{F})$ only at $(x, \xi)$, and the intersection is transverse.
\end{definition}

\begin{definition}[transverse test family (analytic)]\label{Cttfamdef}
    A \underline{transverse test family (ttfam)} of $\mathcal{F}$ at a \emph{smooth} point $(x, \xi)$ of $SS\mathcal{F}$, denoted by $(T,U,V,f)$, is the following data (here $\mathbb{A}^1$ denotes $\mathbb{C}$ viewed as a complex analytic manifold):\\
\[\begin{tikzcd}
	{} & {U\times T} & V & {x_T:=x\times T} & {} \\
	&& {\mathbb{A}^1_T:=\mathbb{A}^1\times T} \\
	&& {}
	\arrow[hook', from=1-3, to=1-2]
	\arrow[hook', from=1-4, to=1-3]
	\arrow["f", from=1-3, to=2-3]
\end{tikzcd}\]
where:\\
(i) T is a connected complex analytic manifold, serving as the parameter space of the family. We will often identify T with $0\times T\subseteq\mathbb{A}^1\times T$.\\
(ii) U is an open neighbourhood of x, V is an open of $U\times T$ containing $x_T$.\\
(iii) f is a complex analytic map such that, for all $s\in T$, the \underline{s-slice} $f_s: V_s$ $(:= V\times_{\mathbb{A}^1_T}\mathbb{A}^1_s)\rightarrow \mathbb{A}^1_s$ is a ttfun with respect to $\mathcal{F}$ at $(x,\xi)$.
\end{definition}

\begin{remark}
    Later in the introduction, we will also use the notions of ttfun and ttfam in the algebraic context, which are the obvious analogues of their analytic counterparts. See Definition \ref{ttfun-alg} and Definition \ref{ttfamdef} for the precise definitions.
\end{remark}

Now, for $(x,\xi)$ a smooth point of $SS\mathcal{F}$, the \underline{microstalk} of $\mathcal{F}$ at $(x,\xi)$ is defined to be $\phi_f(\mathcal{F})_x$, where $f$ is any transverse test function at $(x,\xi)$. It is well-defined because of the crucial fact that, in the complex analytic context, vanishing cycles have strong stability with respect to the variation of the ttfun. More precisely:

\begin{theorem}[{\cite[\nopp 7.2.4]{kashiwara_microlocal_1985}}, Theorem \ref{phistability/C}]
Let $X$ be a complex analytic manifold, $\mathcal{F}\in D(X)$, and $(x, \xi)$ a smooth point of $ SS\mathcal{F}$. Then:\\
(i) For every two ttfun's $f$ and $g$ of $\mathcal{F}$ at $(x, \xi)$, there exists (noncanonically) an isomorphism $\phi_f(\mathcal{F})_x$ $\cong \phi_g(\mathcal{F})_x$ in $D^b_c(\mathbb{C}[\mathbb{Z}])$.\\
(ii) For every ttfam $(T,U,V,f)$ of $\mathcal{F}$ at $(x, \xi)$, $\phi_{pf}(\mathcal{F}_V)$ is a local system on $x_T$, with the stalk at $x\times s$ canonically isomorphic to $\phi_{f_s}(\mathcal{F})_x$, for all $s\in T$ $(=0\times T\subseteq \mathbb{A}^1\times T)$. Here $p$ is the projection $\mathbb{A}^1\times T\rightarrow\mathbb{A}^1$, $\mathcal{F}_V$ is the pullback of $\mathcal{F}$ to $V$.
\end{theorem}

Statement (i) says that the vanishing cycles, as vector spaces with monodromy actions, are independent of the choice of the ttfun. The stronger statement (ii) is a family version of (i).\\

These fail completely in the positive characteristic algebraic context because of wild ramifications. Here is an example (see §\ref{sectiononfailure} for details): consider $\mathbb{A}^2$ over an algebraically closed field of characteristic $p>3$. Let $D$ be the $y$-axis and $U$ be the complement. Let $\mathcal{F}$ be the !-extension to $\mathbb{A}^2$ of the Artin-Schreier sheaf on $U$ determined by the equation $t^p-t=y/x^p$ and a nontrivial character of its covering group $\mathbb{Z}/p$. One can show $SS\mathcal{F}=T^*_XX \cup \langle dy\rangle_D$, where $\langle dy\rangle_D$ denotes the subspace of $D\times_X T^*X$ consisting of covectors proportional to $dy$. Consider the vanishing cycles with respect to the following two functions: $f_0(x,y)=\frac{y}{1+x}$, $f_1(x,y)=\frac{y}{1+x}+x^3$. It is easily checked that $f_0$ and $f_1$ are ttfun's at $((0,0),dy)$. However, using a theorem of Deligne-Laumon (Theorem \ref{DLthm}), one computes: $\mathrm{dim}(\phi_{f_0}(\mathcal{F})_a)=-(p-1)$, while $\mathrm{dim}(\phi_{f_1}(\mathcal{F})_a)=-2$.

\begin{question}
    What stability do vanishing cycles have in the algebraic context?
\end{question}

One expects that simple normal crossing tame sheaves have similar stability as in the complex analytic context. This turns out to be true (see Theorem \ref{intro_thm_2}). In general, one expects the dependence of vanishing cycles on the ttfun to be only up to a finite jet (as is suggested by a computation similar to the above with $f_1=\frac{y}{1+x}+x^N$ for a big $N$). In the first part of this paper, we show that this is generically true on a smooth surface. This result is inspired by a result of Saito (\cite[\nopp 2.14]{saito_characteristic_2015}). To state it precisely, we first introduce the following notion and some preliminary notations.

\begin{definition}[depth of $\mathcal{F}$]
    Let $\mathcal{F}$ be a sheaf on a smooth variety over a field of characteristic $p>2$. For a smooth point $(x,\xi)$ of $SS\mathcal{F}$, the \underline{depth of $\mathcal{F}$ at $(x,\xi)$} is the smallest $N\geq 2$ such that $\phi_f(\mathcal{F})$ is a local system for every ttfam $(T,U,V,f)$ at $(x,\xi)$ satisfying the following condition: $f_s\equiv f_{s'}$ mod $\mathfrak{m}_x^N$, for all closed points $s, s'$ of $T$. If such an $N$ does not exist, we say the depth is $\infty$.\\

    Here $\phi_f(\mathcal{F})$ is defined to be $\Phi_f(\mathcal{F}_V)|_{x_T\overleftarrow{\times}_{\mathbb{A}^1_T} (\mathbb{A}^1_T-T)}$, the restriction of the vanishing cycle of the pullback $\mathcal{F}_V$ of $\mathcal{F}$ to $V$ with respect to the map $f: V\rightarrow \mathbb{A}^1_T$ to the oriented product topos $x_T\overleftarrow{\times}_{\mathbb{A}^1_T} (\mathbb{A}^1_T-T)$.
\end{definition}

\begin{remark}
    (i) As the transversality of a test function depends on the quadratic terms of the function (in some local coordinate), we exclude $p=2$ throughout the text to avoid unnecessary complications. Also note that when $p\neq 2$, there is enough supply of ttfun's, in the sense that they always exist for every smooth point $(x,\xi)$ of $SS\mathcal{F}$ (\cite[\nopp 4.12]{beilinson_constructible_2016}).\\
    (ii) We refer to \cite{orgogozo_modifications_2006, illusie_around_2017} for details on oriented products and vanishing cycle over general bases, and to Remark \ref{rmkttfam} for further discussion of this definition.
\end{remark}

Let $X$ be a smooth surface over an algebraically closed field $k$ of characteristic $p>2$, and $\mathcal{F}\in D_{ctf}(X)$. Let $D$ be the ramification divisor of $\mathcal{F}$ (assumed to be non-empty), and $U=X-D$ its complement. Let $\overline{U}\rightarrow U$ be the minimal \etale Galois covering trivialising $\mathcal{F}$,\footnote{I.e., the covering corresponding to the quotient $\pi_1(U,\overline{\eta}_U)\twoheadrightarrow G$, where $G$ is the image of $\pi_1(U,\overline{\eta}_U)$ in $\mathrm{Aut}_{\mathbb{Z}/\ell^n}(\mathcal{F}_{\overline{\eta}_U})$.} with Galois group $G$. Let $\overline{X}\rightarrow X$ be the normalisation of $X$ in $\overline{U}$. We use the following notations (see §\ref{subsection on high-jet} for more details): $I_{\sigma, \overline{X}}$ denotes the ideal corresponding to the subscheme of fixed points of $\sigma$ acting on $\overline{X}$; $ep(I_{\sigma, \overline{X}})$ denotes $\mathrm{the\,\, smallest}\,\, a \in \mathbb{N}$ such that $(\sqrt{I_{\sigma, \overline{X}}})^a\subseteq{I_{\sigma, \overline{X}}}$; $i_x$ denotes the intersection number at $x$ of the zero locus of a ttfun at $(x,\xi)$ with $D$ (which is either 1 or 2 for a general $(x,\xi)$).

\begin{theorem}[Theorem \ref{thmstabbeau}, c.f. Theorem \ref{thmstab}]
    Let $X$ be a smooth surface over an algebraically closed field $k$ of characteristic $p>2$, and $\mathcal{F}\in D_{ctf}(X)$. Then, there exists a Zariski open dense $V=X-$\{finitely many closed points\} and a Zariski open dense $S\subseteq SS(\mathcal{F}|_V)$ such that for every closed point $(x,\xi)\in S$, there exists an integer $N\geq2$ such that the depth of $\mathcal{F}$ at $(x,\xi)$ is $\leq N$. Moreover, we have an explicit upper bound: if $\mathcal{F}$ is locally constant in some punctured neighbourhood of $x$, then $N=2$; if $x$ lies in a ramification divisor of $\mathcal{F}$, then $N\leq2^{M-1}\cdot i_x\cdot |G|+(2p+1)^M\cdot\max_{\sigma\neq \mathrm{id}\in G}\{ep(I_{\sigma, \overline{X}})\}\cdot i_x\cdot|G|$. Here $M$ is a uniform bound for the number of blowups needed to resolve the singularities of the curve $f^{-1}(0)\times_X \overline{X}\hookrightarrow\overline{X}$, as $f$ ranges through all ttfun's at $(x,\xi)$. It is part of the claim that this bound exists and can be made explicit.
\end{theorem}

\begin{remark}
    The subsets $V$ and $S$ can be made explicit, see Theorems \ref{thmstab}. The statement is expected to still hold outside of $V$ and $S$ (see Conjecture \ref{intro_conj}), but our method does not apply directly. This requires further investigation.
\end{remark}

Here is an outline of the proof. By dévissage one reduces to the case of a local system !-extended along a divisor. Using a distinguished triangle of Saito (Remark \ref{rmkttfam}.(iv)), one can rephrase the local constancy of $\phi_f(\mathcal{F})$ as the pair $(f_T: V_T\rightarrow T, \mathcal{F}_T)$ being universally locally acyclic (ULA), where $V_T\rightarrow T$ is the family of zero loci of this ttfam, and $\mathcal{F}_T$ is the pullback of $\mathcal{F}$ to $V_T$. Applying the theorem of Deligne-Laumon, we then translate the ULA condition to the Swan conductor on each fibre being constant along the family. This reduces the question to the stability of Swan conductors of restrictions to curves. As the Swan conductor can be computed in terms of intersection numbers and representation-theoretic data (§\ref{subsec_swan}), we further reduce the question to a purely geometric one. We analyse the geometric question by studying how various invariants change under blowups, and finally reduce the question to bounding the blowup number with respect to the variation of the curve. An argument of Bernd Ulrich utilising Dedekind codifferents then shows that this number is indeed bounded.\\

We conjecture that the theorem holds in greater generality. The positive answer to this conjecture will enable one to define microstalks (potentially even microlocalisation) in positive characteristic, on a high jet bundle.
\begin{conjecture}[Conjectures \ref{finitedepth}, \ref{finitedepthrep}]\label{intro_conj}
     Let $X$ be a smooth variety over an algebraically closed field $k$ of characteristic $p\neq 2$. Then every $\mathcal{F}\in D(X)$ has finite depth at all smooth points of $SS\mathcal{F}$.
\end{conjecture}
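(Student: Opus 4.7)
The plan is to extend the surface argument to arbitrary dimension by following its overall structure while replacing the two-dimensional geometric analysis with higher-dimensional resolution-theoretic input. First, by dévissage, one reduces to the case where $\mathcal{F}$ is the !-extension of a locally constant sheaf from the complement of a divisor $D\subseteq X$. Then, as in the surface case, Saito's distinguished triangle identifies the local constancy of $\phi_f(\mathcal{F})$ along a transverse test family $(T,U,V,f)$ with the universal local acyclicity of the pair $(f_T\colon V_T\rightarrow T,\,\mathcal{F}_T)$, where $V_T\rightarrow T$ is the family of zero loci cut out by the ttfam.

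The next step is to transport the ULA condition into a numerical ramification statement. For this I would combine a higher-dimensional Deligne--Laumon-style semi-continuity (in the spirit of Saito's semi-continuity of total dimension used in \cite{saito_characteristic_2017}) with a generic slicing step: intersect $V_T$ with generic codimension-$(d-2)$ linear pencils, where $d=\dim X$, to reduce to a family of curves. Generic slices are transversal to $SS\mathcal{F}$, and the Swan conductor along the resulting curve family should compute the relevant microlocal invariant fibrewise. The problem then becomes: for any two ttfun's $f_0,f_1$ at $(x,\xi)$ agreeing modulo a sufficiently high power of $\mathfrak{m}_x$, show that the pulled-back Swan conductor on a generic curve slice is constant along the pencil connecting them.

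The heart of the argument, as in the surface case, is a geometric bound on how much this Swan conductor can jump when $f$ is perturbed by an element of $\mathfrak{m}_x^N$. Via the Kato--Saito intersection-theoretic computation of Swan, this reduces to a uniform bound on the number of blowups needed to resolve the pullback curve $f^{-1}(0)\times_X \overline{X}$ on the Galois cover $\overline{X}\rightarrow X$. The main obstacle is supplying this uniform bound in dimension $\geq 3$: Bernd Ulrich's Dedekind-codifferent argument is specific to surfaces, and resolution of singularities in positive characteristic is not yet known in dimension $\geq 4$. A plausible substitute is to replace resolution by de Jong alterations (which exist in all dimensions) together with an alteration-theoretic analogue of the codifferent filtration; alternatively, one may hope that after the slicing reduction, the effectively two-dimensional total space (the curve slices fibred over $T$) allows the surface argument to apply almost verbatim. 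Finally, to promote the conclusion from a generic dense open of $SS\mathcal{F}$ to all smooth points, one would run a stratification and noetherian induction on the bad locus, using that the uniform blowup bound persists on each stratum.
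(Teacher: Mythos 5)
The statement you are proving is Conjecture \ref{finitedepth} of the paper: it is explicitly left open, and the paper only establishes a \emph{generic} version on surfaces (Theorems \ref{thmstab} and \ref{thmstabbeau}, which exclude finitely many closed points and the ``exceptional'' points of $SS\mathcal{F}$). So there is no proof in the paper to compare against, and your proposal, read as a proof, has genuine gaps that you partly acknowledge yourself. The most serious one is the translation of ULA into a numerical condition. The surface proof works because $V_T\rightarrow T$ has relative dimension $1$, so Deligne--Laumon applies verbatim and ULA is equivalent to constancy of the Swan conductor on the curve fibres. In dimension $d\geq 3$ the fibres of $V_T\rightarrow T$ have dimension $d-1\geq 2$, and no Deligne--Laumon-type criterion is available: ULA of a higher-relative-dimension family is not characterized by the constancy of a single conductor-like invariant. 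Your proposed fix --- generic codimension-$(d-2)$ slicing --- does not obviously repair this: ULA is not detected by generic curve slices, and the slices relevant here must pass through the fixed characteristic point $x$, so they are not generic in the sense needed for transversality to $SS\mathcal{F}$ and for the slice to compute the vanishing cycles at $(x,\xi)$. You would need a precise statement (and proof) that ULA of $(f_T,\mathcal{F}_T)$ follows from constancy of Swan conductors on a suitable family of curve slices; no such statement is currently known.

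The second gap is the uniform resolution bound. Ulrich's argument (Proposition \ref{prop_boundonBUstages}) bounds the number of blowup stages via $M_C\leq\dim_k(\overline{R}'/R')$ and the codifferent $\mathfrak{C}(R/A)$, and this chain of inequalities uses specifically that $R'$ is a one-dimensional ring (Northcott's first neighbourhood ring results) sitting inside a two-dimensional normal, hence Cohen--Macaulay, $R$. There is no analogue in higher dimension, and substituting de Jong alterations is not innocuous: the geometric computation of $i_G(\sigma)$ as an intersection number on the normalization is what feeds into the Swan conductor, and an alteration changes the covering and hence the ramification data being measured. Finally, your last step --- promoting the generic statement to all smooth points by stratification and noetherian induction --- is precisely what the paper cannot do even on surfaces; the passage from ``finite depth at generic points of $SS\mathcal{F}$'' to ``finite depth at all smooth points'' is part of what makes the conjecture open. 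Your outline is a reasonable research program, and correctly identifies the surface proof's architecture, but each of the three substitutions you propose (higher-dimensional Deligne--Laumon, slicing, alteration-theoretic codifferents) is itself an unproven claim of roughly the same difficulty as the conjecture.
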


The second part of this paper studies the class of sheaves whose vanishing cycles have the strongest stability, as well as certain functorialities of the depth. We call a sheaf $\mu c$ if it has depth 2 at all smooth points in its $SS$, and $\mu c^s$ if it is $\mu c$ after all smooth pullbacks (Definition \ref{defmucsheaves}). The stability of vanishing cycles for $\mu c$ and $\mu c^s$ sheaves is similar to that in the complex analytic context. More precisely, among other things, we show:

\begin{theorem}[Proposition \ref{tamearemuc}, Lemma \ref{phiindepofttfun}.(ii), Corollary \ref{Radoncompat}]\label{intro_thm_2}~\\
    (i) Let $X$ be a smooth variety over an algebraically closed field of characteristic $p>2$, $D\hookrightarrow X$ a simple normal crossing divisor (allowed to be empty), and $j: U\hookrightarrow X$ its complement. Then, for every local system $\mathcal{L}$ on $U$, $j_!\mathcal{L}$ is $\mu c^s$.\\
    (ii) Let $X$ be a smooth variety over an algebraically closed field of characteristic $p>2$,  $\mathcal{F}$ a $\mu c$ sheaf on $X$, and $(x,\xi)$ a smooth point in $SS\mathcal{F}$. Then, for every two ttfun's $f$ and $g$ at $(x, \xi)$, there exists (noncanonically) an isomorphism $\phi_f(\mathcal{F})_x\cong \phi_g(\mathcal{F})_x$ as objects in $D^b_c(\mathbb{Z}/\ell^n)$. We call this isomorphism class the microstalk of $\mathcal{F}$ at $(x,\xi)$.\\
    (iii) Being $\mu c^s$ is preserved under the Radon transform. Moreover, their microstalks are invariant under the Radon transform.
\end{theorem}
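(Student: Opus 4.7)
The plan is to treat the three claims in turn, in each case leveraging the fact that the depth~$2$ condition packages the stability of vanishing cycles under families of test functions that agree only to first order at $x$.

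For part i), I first reduce $\mu c^s$ to $\mu c$: if $g:Y\to X$ is a smooth morphism, then $g^*j_!\mathcal{F}\cong j'_!(g|_{g^{-1}(U)})^*\mathcal{F}$, and $g^{-1}(D)\subseteq Y$ is still SNC because smoothness preserves the SNC property, while $(g|_{g^{-1}(U)})^*\mathcal{F}$ is still tame. So it suffices to show $j_!\mathcal{F}$ itself is $\mu c$. At a smooth point $(x,\xi)$ of $SS(j_!\mathcal{F})$, $\xi$ is conormal to a unique stratum of the SNC stratification, and any ttfun at $(x,\xi)$ is transverse to that stratum at $x$. Given a ttfam $(T,U,V,f)$ whose slices agree modulo $\mathfrak{m}_x^{2}$, every $f_{s}$ shares the first jet of $f$ at $x$, so the zero loci $f_{s}^{-1}(0)$ are all transverse to the stratum at $x$ in a uniform way. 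I then pull back along the minimal tame covering trivialising $\mathcal{F}$, which by the SNC hypothesis is étale-locally a product of Kummer coverings; the sheaf becomes a direct sum of tame characters, and for each character the vanishing cycles with respect to $f_{s}$ depend only on the first jet of $f_{s}$ at $x$. This follows either from a direct local computation in Kummer coordinates or, more conceptually, from the Thom--Sebastiani / Künneth formula of \cite{illusie_around_2017} applied after further reducing to a product situation. Constancy of the first jet along $T$ therefore yields that $\phi_{f}(j_!\mathcal{F})$ is a local system on $x_T$.

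For part ii), I interpolate between $f$ and $g$. Since both are ttfuns at $(x,\xi)$, we have $f(x)=g(x)=0$ and $df(x)=dg(x)=\xi$; in particular $f\equiv g\bmod \mathfrak{m}_x^{2}$. Take $T\subseteq\mathbb{A}^{1}$ a connected open containing $0$ and $1$ and set $h_{s}:=(1-s)f+sg$. Then each $h_{s}$ satisfies $h_{s}(x)=0$ and $dh_{s}(x)=\xi$, and moreover $h_{s}\equiv h_{s'}\bmod \mathfrak{m}_x^{2}$ for all $s,s'\in T$. Transversality of $\Gamma_{dh_{s}}$ to $SS\mathcal{F}$ away from $x$ is an open condition, so after shrinking $V$ to a Zariski neighbourhood of $x_T$ in $U\times T$, the datum $(T,U,V,h)$ is a ttfam at $(x,\xi)$. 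The $\mu c$ hypothesis now forces $\phi_{h}(\mathcal{F})$ to be a local system on the connected $x_T$, and its stalks at $s=0$ and $s=1$ are canonically $\phi_{f}(\mathcal{F})_x$ and $\phi_{g}(\mathcal{F})_x$; any path in $x_T$ between these two points yields the desired (noncanonical) isomorphism.

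For part iii), I use the geometric Radon setup with the incidence correspondence $Q\subseteq\mathbb{P}^n\times\check{\mathbb{P}}^{n}$, so $R\mathcal{F}=p_{2!}p_{1}^{*}\mathcal{F}[n-1]$. By Beilinson's theorem, $SS$ transforms under the Radon transform via projective Legendre duality, matching smooth points with smooth points. Smooth pullback on the dual side commutes with $R$ up to base change of the incidence correspondence, so preservation of $\mu c^{s}$ reduces to preservation of $\mu c$. Fix a smooth point $(\check x,\check\xi)\in SS(R\mathcal{F})$, corresponding to a smooth point $(x,\xi)\in SS\mathcal{F}$ via the Legendre duality. A ttfun $\check f$ at $(\check x,\check\xi)$ pulls back along $p_{2}$ to a function whose vanishing cycles on $p_{1}^{*}\mathcal{F}$ can, by proper base change for $p_{2!}$ and the Künneth formula for vanishing cycles over general bases \cite{illusie_around_2017,lu_duality_2019}, be canonically identified with $\phi_{f}(\mathcal{F})_x$ for the ttfun $f$ at $(x,\xi)$ cut out by the incidence data. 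The same argument applied slice-wise to a ttfam on the Radon side produces a ttfam on the original side with constant first-jet condition preserved, and the $\mu c$ hypothesis on $\mathcal{F}$ forces the family vanishing cycles to be local, giving $\mu c$ for $R\mathcal{F}$. The resulting canonical identification of stalks, combined with part ii) to kill the dependence on the chosen ttfun on each side, proves the invariance of the microstalk under the Radon transform.

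The main obstacle is the local matching of vanishing cycles in parts i) and iii). In i) one must control vanishing cycles of a tame SNC sheaf with respect to a family of functions constrained only by their first jet; the reduction to Kummer coverings followed by Thom--Sebastiani is technically involved, especially at a point lying on an intersection of divisor components. In iii) the explicit identification of $\phi_{\check f}(R\mathcal{F})_{\check x}$ with $\phi_{f}(\mathcal{F})_x$ must correctly thread through nearby cycles over the general base $\mathbb{A}^{1}\times T$, and one must carefully check that the Legendre-dual data indeed produces a ttfun (resp.\ ttfam) at $(x,\xi)$ on the source side.
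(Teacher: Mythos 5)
Your part ii) is essentially the paper's argument (connect $f$ and $g$ by a ttfam and invoke the $\mu c$ hypothesis), but with one oversight: the linear interpolation $h_s=(1-s)f+sg$ need not be transverse to $SS\mathcal{F}$ \emph{at} $(x,\xi)$ for intermediate $s$. Transversality at $(x,\xi)$ depends on the $2$-jet, and the set of admissible $2$-jets is open but not convex (already for $SS\mathcal{F}\supseteq T^*_xX$ it is the set of nondegenerate quadratic forms). You only checked openness of the condition \emph{away} from $x$. Over an algebraically closed field this is repairable — the bad locus is the vanishing set of a polynomial in $s$ that is nonzero at $s=0,1$, so deleting finitely many points leaves a connected $T$ — but it must be said. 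The paper sidesteps it by first contracting the higher-order terms (which does not move the $2$-jet) and then varying inside the irreducible open set $Q$ of admissible quadratic forms.

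For part i) there is a genuine gap. The assertion that, after passing to the Kummer covering, ``the vanishing cycles with respect to $f_s$ depend only on the first jet of $f_s$'' is the entire content of the proposition and is not proved. Two concrete obstacles: (a) the pullback of $f_s$ along the Kummer map $(z_i)\mapsto(z_i^{n_i})$ has a \emph{degenerate} critical point, and a general ttfun $\sum\xi_ix_i+\sum a_{ij}x_ix_j+\cdots$ does not split as a sum of functions in separate groups of variables, so Thom--Sebastiani does not apply as stated; (b) even if one showed all stalks $\phi_{f_s}(j_!\mathcal{F})_x$ isomorphic, that is strictly weaker than $\phi_f(j_!\mathcal{F})$ being a local system, which by Remark \ref{rmkttfam} v) is equivalent to $(f_T,\mathcal{F}_T)$ being ULA; outside relative dimension one there is no Deligne--Laumon-type numerical criterion reducing ULA to constancy of fibrewise invariants. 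The paper instead proves the ULA statement directly: it gives an explicit two-step simultaneous resolution of the embedded singularities $D_T\cap V_T\hookrightarrow V_T$ (blow up $x_T$, then the intersection of the exceptional divisor with the strict transform of $D_{1,T}\cap\cdots\cap D_{r-1,T}$), after which the pulled-back sheaf is again SNC tame with conormal $SS$ transverse to the composed map to $T$, and concludes by properness.

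In part iii) the logical structure is off. The Legendre-dual data does \emph{not} produce a ttfun or ttfam on $\mathbb{P}$ at $(x,\xi)$: the pullback $\check f\circ q$ is a ttfun for $p^*\mathcal{F}$ at the point $(z,\zeta)$ of $T^*Q$ (this is Lemma \ref{radonintersect}), and there is no induced test datum downstairs. For the same reason ``preservation of $\mu c^s$ reduces to preservation of $\mu c$'' is backwards: the $\mu c^s$ hypothesis is needed exactly so that $p^*\mathcal{F}$ is $\mu c$ on $Q$ (whether $\mu c$ is stable under smooth pullback is an open question in the paper), after which $q$ being ``special'' for $p^*\mathcal{F}$ gives that $R\mathcal{F}=q_!p^*\mathcal{F}$ is $\mu c$, and $\mu c^s$ follows because specialness is preserved by smooth base change. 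The identification of microstalks is then carried out on $Q$: one uses part ii) for $p^*\mathcal{F}$ to replace $\check f\circ q$ by $f+h$ with $h$ a fibrewise nondegenerate quadratic form, and only then applies Thom--Sebastiani to peel off the fibre direction and recover $\phi_f(\mathcal{F})_x$ up to shift.
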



We outline the proofs. For (i), as for the previous theorem, we first rephrase the question as showing $(f_T: V_T\rightarrow T, \mathcal{F}_T)$ being ULA. We then need to understand the singularities of the intersections of the zero loci of ttfun's and the simple normal crossing divisors. We give explicit resolutions of such singularities (§\ref{subsec_resolution}). Then, $\mathcal{F}_T$ can be written as the pushforward of its pullback via the resolution map $\pi$. The map $f_T\pi$ will be transversal to $SS(\pi^*\mathcal{F}_T)$, from which the ULA statement follows easily. Statement (ii) follows from the definition of $\mu c$ sheaves plus the lemma that every two ttfun's can be connected by some ttfam. For (iii), the argument is similar to the complex analytic case and essentially reduces to a detailed understanding of the geometry of the Radon transform. Actually, one can show the stability of $\mu c$ and $\mu c^s$ sheaves for all proper pushforwards which share similar geometric properties as (the pushforward part of) the Radon transform (Proposition \ref{misclemmaspepush}).\\ 

In the appendix, we list some analogies and contrasts among several sheaf theories from the microlocal point of view.


\subsection*{Conventions}\label{sec_conventions}
     All derived categories are in the triangulated sense. All functors are derived. A “sheaf” means an object of $D(X)$ (see below). A “local system” means an object of $D(X)$ whose cohomology sheaves are locally constant constructible. \\
     
     In the complex analytic context, $D(X)$ denotes $D^b_{\mathbb{C}-c}(X, \mathbb{C})$ in the sense of \cite[\nopp 8.5]{kashiwara_sheaves_1990}. We fix a generator of $\pi_1(\mathbb{C}^{\times},1)$ and identify it with $\mathbb{Z}$.\\
     
     In the algebraic context, we work with varieties (finite type reduced separated schemes over $k$) over an algebraically closed field $k$ of characteristic $p\geq 0$, $D(X)$ denotes $D^b_c(X,\mathbb{Z}/\ell^n)$ for a fixed prime $\ell\neq p$, and $D_{ctf}(X)\subseteq D(X)$ denotes the full subcategory of objects of finite tor-dimension. A “geometric point” means a map from the spectrum of a separably closed field. The fundamental group $\pi_1(\mathbb{A}^1_{k,(0)}-\{0\},\overline{\eta})$ is denoted by $G_\eta$, where $\mathbb{A}^1_{k,(0)}$ is the strict henselisation of $\mathbb{A}^1_k$ at the origin, $\overline{\eta}$ is a fixed geometric point over its generic point.\\
     
     The triangulated category of bounded complexes of $\mathbb{C}$-vector spaces (resp. $\mathbb{C}[\mathbb{Z}]$-modules) with finite dimensional cohomologies is denoted by $D^b_c(\mathbb{C})$ (resp. $D^b_c(\mathbb{C}[\mathbb{Z}])$). Similarly for $D^b_c(\mathbb{Z}/\ell^n)$, $D^b_c(\mathbb{Z}/\ell^n[G_{\eta}])$. For $M\in D^b_{ctf}(\mathbb{Z}/\ell^n[G_{\eta}])$, $\mathrm{sw}(M)$ (resp. $\mathrm{dim}(M)$) denotes the alternating sum of the Swan conductor (resp. the dimension) of its cohomologies, after applying $-\otimes_{\mathbb{Z}/\ell^n}\mathbb{F}_\ell$ (derived tensor).\\
     
     For $f: X\rightarrow Y$ a map of complex analytic manifolds (resp. smooth schemes over $k$), we have the correspondence $T^*X\leftarrow X\times_Y T^*Y\rightarrow T^*Y$. We use $df$ to denote the first map, unless otherwise stated in specific contexts. When $Y=\mathbb{A}^1$, we use $\Gamma_{df}$ to denote the image of $X\times_{\mathbb{A}^1}\{1\}$($\subseteq X\times_{\mathbb{A}^1} T^*\mathbb{A}^1\simeq X\times_{\mathbb{A}^1}\mathbb{A}^1$) in $T^*X$ (as a reduced closed subspace). For a closed conical subset $C\subseteq T^*X$, we refer to \cite{beilinson_constructible_2016} for the meaning of $f_{\circ}C$, $f^{\circ}C$, $C$-transversality, and related terminologies. When speaking of points in $C$, we always mean closed points.\\

     For two subvarieties $C, D$ intersecting at finitely many points in some ambient variety, we use $C\cdot D$ to denote their intersection, either as a subscheme or the intersection number, depending on the context.

\subsection*{Acknowledgement}
I would like to express my sincere gratitude to my PhD advisor David Nadler for his guidance, generosity, and encouragements.\\
I thank Owen Barrett, Sasha Beilinson, Mark Macerato, Martin Olsson, Takeshi Saito, Jeremy Taylor and Bernd Ulrich for valuable discussions, especially to Owen Barrett for Lemma \ref{owenretraction}, and to Bernd Ulrich for his crucial help in proving Proposition \ref{prop_boundonBUstages}. The latter allows to remove a genericity assumption in the stability theorem in a previous version of this article. I also thank Hélène Esnault for correcting a statement in the Appendix, Extension properties.\\
Finally, I heartily thank the referee for their careful reading, corrections, and numerous very helpful suggestions for improving the article. 

\section{Preliminary discussion on sheaf theory}\label{section-review}
In this section, we review some microlocal-sheaf-theoretic constructions in both complex analytic and algebraic contexts, and compare them. Except for the definitions of the ttfun and the Radon setup, §\ref{reviewcomplex} is logically independent of the rest of the paper, but serves as a motivation.
\subsection{Complex analytic context}\label{reviewcomplex}
The reference for this subsection is \cite{kashiwara_sheaves_1990}. Let $X$ be a complex analytic manifold, and $D(X)$ be the triangulated category of bounded $\mathbb{C}$-constructible complexes of sheaves of $\mathbb{C}$-vector spaces. The notion of the singular support (or the microsupport) $SS\mathcal{F}$ is defined for $\mathcal{F}\in D(X)$. It is a middle dimensional $\mathbb{C}^{\times}$-conical closed \emph{Lagrangian} subset in $T^*X$ which records the codirections in which $\mathcal{F}$ is not locally constant. More precisely, it equals the closure of all $(x,\xi)\in T^*X$ such that there exists some complex analytic function $f$ on some open neighbourhood of $x$ such that the stalk of the vanishing cycle $\phi_f(\mathcal{F})_x$ is nonzero. $SS\mathcal{F}$ is the 0-th order invariant (the locus) of the “singularities” of $\mathcal{F}$. Clearly, the vanishing cycle, viewed as an object in $D^b_c(\mathbb{C}[\mathbb{Z}])$ (bounded complexes of $\mathbb{C}[\mathbb{Z}]$-modules with finite dimensional cohomologies), is a much finer measurement of the sheaf. However, it depends on the choice of the test function $f$. It is a crucial fact that, when restricted to transverse test functions, $\phi_f(\mathcal{F})_x$ is essentially independent of $f$, in the precise sense below. We refer to Definitions \ref{ttfun}, \ref{Cttfamdef} for the definitions of ttfun and ttfam.

\begin{theorem}\label{phistability/C}
Let $X$ be a complex analytic manifold, $\mathcal{F}\in D(X)$, $(x, \xi)$ a smooth point of $ SS\mathcal{F}$. Then:\\
(i) For every two ttfun's $f$ and $g$ of $\mathcal{F}$ at $(x, \xi)$, there exists (noncanonically) an isomorphism $\phi_f(\mathcal{F})_x\cong \phi_g(\mathcal{F})_x$ in $D^b_c(\mathbb{C}[\mathbb{Z}])$.\\
(ii) For every ttfam $(T,U,V,f)$ of $\mathcal{F}$ at $(x, \xi)$, $\phi_{pf}(\mathcal{F}_V)$ is a local system on $x_T$, with the stalk at $x\times s$ canonically isomorphic to $\phi_{f_s}(\mathcal{F})_x$, for all $s\in T$ $(=0\times T\subseteq \mathbb{A}^1\times T)$. Here $p$ is the projection $\mathbb{A}^1\times T\rightarrow\mathbb{A}^1$, $\mathcal{F}_V$ is the pullback of $\mathcal{F}$ to $V$.
\end{theorem}

In the following discussion, we will need a variant of ttfam: in Definition \ref{Cttfamdef}, as $s$ varies, instead of requiring $f_s$ to be ttfun's at a fixed $\nu_0=(x,\xi)$, we allow $f_s$ to be ttfun's at $\nu(s)=(x(s),\xi(s))$ for varying smooth points $\nu(s)$ on $SS\mathcal{F}$, and require $\nu(s_0)=\nu_0$ for some $s_0$. We call such families \underline{weak transverse test families (wttfam)} at $(x,\xi)$.\\

The real analytic counterpart of Theorem \ref{phistability/C} is contained in the statement and proof of \cite[\nopp 7.2.4]{kashiwara_microlocal_1985}. The complex case can be easily deduced from it. We include a proof for completeness.\\

For a real analytic manifold $X$, $D(X)$ denotes the triangulated category of bounded $\mathbb{R}$-constructible sheaves of $\mathbb{C}$-vector spaces. For $f$ a real analytic function and $\mathcal{F}\in D(X)$, the vanishing cycle is defined as $\phi_f(\mathcal{F})=R\Gamma_{\{f\geq0\}}(\mathcal{F})|_H$, where $H=\{f=0\}$. If $H$ is smooth, it is also equal to $d_f^*\mu_Y(\mathcal{F})$,\footnote{We use the notation “$^*$” instead of “$^{-1}$” (as in \cite{kashiwara_sheaves_1990}) for the sheaf pullback.} where $\mu_Y$ is the microlocalisation along $Y$, $d_f$ is the map $Y\rightarrow T^*_YX$, $y\mapsto (y,df)$. The notions of ttfun, ttfam and wttfam have obvious analogues in the real analytic context: namely, one replaces the word “complex” (resp. “$\mathbb{C}$”) by “real” (resp. “$\mathbb{R}$”) in Definition \ref{ttfun} and Definition \ref{Cttfamdef}.

\begin{proposition}[{\cite[\nopp 7.2.4]{kashiwara_microlocal_1985}}]\label{KS85,7.2.4}
    Let $X$ be a real analytic manifold, $\mathcal{F}\in D(X)$, and $(x, \xi)$ a smooth point of $SS\mathcal{F}$. Then for every wttfam $(T,U,V,f)$ of $\mathcal{F}$ at $(x, \xi)$, $\phi_{pf}(\mathcal{F}_V)$ is a local system on $x_T$, with the stalk at $x\times s$ canonically isomorphic to $\phi_{f_s}(\mathcal{F})_x$, for all $s\in T$.
\end{proposition}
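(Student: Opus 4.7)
The plan is to convert the claim into a propagation statement for the microlocalisation $\mu_H(\mathcal{F}_V)$ along a smooth arc sitting inside the regular part of $SS(\mathcal{F}_V)$, and then invoke the Kashiwara--Schapira non-characteristic deformation lemma. Because every slice $f_s$ is a ttfun at $\nu(s) = (x(s), \xi(s))$, one has $df_s|_{x(s)} = \xi(s) \neq 0$, so $pf \colon V \to \mathbb{A}^1$ is a submersion along $x_T$ and $H := (pf)^{-1}(0)$ is smooth in a neighbourhood of $x_T$. On this open set the identification $\phi_{pf}(\mathcal{F}_V) \cong d_{pf}^{*} \mu_H(\mathcal{F}_V)$ applies, where $d_{pf} \colon H \to T^{*}_H V$ sends $y \mapsto (y, d(pf)|_y)$. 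This reduces the task to showing that $d_{pf}^{*} \mu_H(\mathcal{F}_V)$ is a local system along $x_T$ with the predicted stalks.

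Next I would analyse $SS(\mathcal{F}_V)$ near $x_T$. Using the open immersion $V \hookrightarrow U \times T$ followed by the smooth projection $U \times T \to U$, one gets $SS(\mathcal{F}_V) \subseteq SS\mathcal{F} \times T^{*}_T T$ near $x_T$. The arc $s \mapsto (x(s), \xi(s), s, 0) \in T^{*}V$, which is precisely the image of $x_T$ under $d_{pf}$, is smooth and, by the wttfam hypothesis, is contained in the regular locus of $SS(\mathcal{F}_V)$; the transversality of each $\Gamma_{df_s}$ to $SS\mathcal{F}$ at $\nu(s)$ ensures that this arc is transverse to the fibres of $T^{*}_H V \to H$ at every point.

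The key step is then to apply the microlocal propagation / non-characteristic deformation result of Kashiwara--Schapira (\cite{kashiwara_microlocal_1985}, as used in their proof of 7.2.4, whose book version is in \cite{kashiwara_sheaves_1990}): along a smooth arc lying in the smooth stratum of $SS(\mathcal{F}_V)$ and transverse to the cotangent fibres, the restriction of $\mu_H(\mathcal{F}_V)$ is a local system with canonical parallel-transport isomorphisms between its stalks. Pulling back by $d_{pf}$ yields that $\phi_{pf}(\mathcal{F}_V)|_{x_T}$ is a local system on $T$. For the stalk identification at $s \in T$, the slice inclusion $V_s \hookrightarrow V$ is non-characteristic for $\mathcal{F}_V$ near $\nu(s)$ by Step 2, so microlocalisation commutes with this restriction and the stalk coincides with that of $\mu_{H_s}(\mathcal{F})$ at $(x(s), df_s|_{x(s)})$, i.e.\ with $\phi_{f_s}(\mathcal{F})_x$.

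The main obstacle is verifying that the arc constructed above genuinely satisfies the hypotheses of the Kashiwara--Schapira propagation lemma in the family $V \to T$: one has to check that the microlocal transversality required by the lemma actually follows from the pointwise ttfun transversalities imposed at each slice, and that the resulting transport isomorphisms agree with the canonical ones coming from the vanishing-cycle description. The bookkeeping between the $T$-direction in $T^{*}V$ and the $\xi$-direction in $T^{*}X$ is the one delicate point; once it is settled, the conclusion is a formal consequence of the microlocal machinery of \cite{kashiwara_sheaves_1990}.
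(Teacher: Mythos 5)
Your argument for the case $\xi\neq 0$ is essentially the paper's: rewrite $\phi_{pf}(\mathcal{F}_V)$ as $d_{pf}^{*}\mu_H(\mathcal{F}_V)$, bound $SS(\mu_H(\mathcal{F}_V))$ so that its restriction to the arc $W=SS(\mathcal{F}_V)\cap T^{*}_HV$ is locally constant, and use functoriality of microlocalisation under non-characteristic pullback to identify the stalks with $\phi_{f_s}(\mathcal{F})_x$ (the paper cites \cite[\nopp 5.2.1 ii), 5.4.2]{kashiwara_microlocal_1985} for precisely these two steps, so the ``main obstacle'' you defer is handled there).

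There is, however, a genuine gap: you assume from the outset that $df_s|_{x(s)}=\xi(s)\neq 0$, but the statement allows $(x,\xi)$ to be a smooth point of $SS\mathcal{F}$ with $\xi=0$ (a point of the zero-section component, where a ttfun is a function with a nondegenerate critical point at $x$). In that case $pf$ is \emph{not} a submersion along $x_T$, the hypersurface $H=(pf)^{-1}(0)$ is singular there, the identification $\phi_{pf}(\mathcal{F}_V)\cong d_{pf}^{*}\mu_H(\mathcal{F}_V)$ is unavailable (it requires $H$ smooth), and your entire reduction collapses. The paper disposes of this case by a separate device: embed $i\colon X=X\times\{0\}\hookrightarrow X\times\mathbb{R}$, replace the family $\{f_s\}$ by $\{z-f_s\}$, which is a wttfam for $i_*\mathcal{F}$ at a \emph{nonzero} conormal covector $\xi'$ of $X$ in $X\times\mathbb{R}$, apply the $\xi\neq 0$ case there, and descend via compatibility of vanishing cycles with proper pushforward. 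You need to add this (or an equivalent) reduction for your proof to cover the full statement.
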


\begin{proof}
    The proof in \cite[\nopp 7.2.4]{kashiwara_microlocal_1985} works for $\xi\neq 0$. We sketch the argument. Let $H=\{pf=0\}, H_s=\{f_s=0\}$. We have $\phi_{pf}(\mathcal{F}_V)=d_{pf}^*\mu_H(\mathcal{F}_V), \phi_{f_s}(\mathcal{F})=d_{f_s}^*\mu_{H_s}(\mathcal{F})$. Let $W=SS\mathcal{F}_V\cap T^*_HV=\mathbb{R}_{> 0}\{(s,x,d(f_s))\}_{s\in T}, W_s=SS\mathcal{F}\cap T^*_{H_s}V_s=\mathbb{R}_{> 0}\{(s,x,d(f_s))\}$. By the estimate of $SS$ of microlocalisations (\cite[\nopp 5.2.1.(ii)]{kashiwara_microlocal_1985}), one checks that $SS(\mu_H(\mathcal{F}_V))\subseteq T^*_WT^*_HX$. This implies $\mu_H(\mathcal{F}_V)|_W$ is locally constant. The first statement follows. By functoriality of the microlocalisation under noncharacteristic pullbacks (\cite[\nopp 5.4.2]{kashiwara_microlocal_1985}), we get $\mu_H(\mathcal{F}_V)|_{W_s}\cong\mu_{H_s}(\mathcal{F})$. The second statement follows.\\
    
    For $\xi=0$. Consider the embedding $i: X=X\times \{0\}\hookrightarrow X\times \mathbb{R}$. Let $z$ be the standard coordinate on $\mathbb{R}$. One checks that the family of functions $\{z-f_s\}_{s\in T}$ gives a wttfam for $i_*\mathcal{F}$ at $(x,\xi')$, where $\xi'$ is any nonzero conormal vector at $x$ of $X$ in $X\times \mathbb{R}$. Then the previous case applies, and the compatibility of vanishing cycles and proper pushforwards implies this case.
\end{proof}

For a complex analytic manifold $Y$, denote by $Y^{\mathbb{R}}$ the underlying real analytic manifold, there is a canonical identification $(T^*Y)^{\mathbb{R}}=T^*Y^{\mathbb{R}}$ (see, e.g., \cite[\nopp 11.1]{kashiwara_sheaves_1990}). For a complex analytic function $h$, $(\Gamma_{dh})^{\mathbb{R}}=\Gamma_{\mathrm{Re}(h)}$ under this identification.  In particular, if $h$ is a ttfun for some $\mathcal{F}$ then so is $\mathrm{Re}(h)$. Furthermore, by \cite[\nopp 13]{kashiwara_sheaves_1990}, for a general $h$ we have a canonical isomorphism $\phi_{h}\cong \phi_{\mathrm{Re}(h)}|_H$ in $D(H)$, where $H=\{\mathrm{Re}(h)=0\}$.

\begin{proof}[Proof of Theorem \ref{phistability/C}]
     (ii) This is immediate from the above paragraph and Proposition \ref{KS85,7.2.4}: a ttfam on $X$ induces a ttfam on $X^{\mathbb{R}}$ whose vanishing cycle is a local system with stalks isomorphic to the vanishing cycles on the slices. Transfer back to complex vanishing cycles, we get the result.\\
     
     (i) This follows from the following observation: given any ttfam $(T,U,V,f)$ on $X$, consider the family $((T\times \mathbb{C}^{\times})^{\mathbb{R}},U^{\mathbb{R}},(V\times\mathbb{C}^{\times})^{\mathbb{R}},g)$ on $X^{\mathbb{R}}$, which on each slice $(s,\lambda)\in (T\times \mathbb{C}^{\times})^{\mathbb{R}}$ is given by $g_{(s,\lambda)}=\mathrm{Re}(\lambda f_s)$. One checks this is a wttfam. By Proposition \ref{KS85,7.2.4}, $\phi_{pg}(\mathcal{F}_{(V\times\mathbb{C}^{\times})^{\mathbb{R}}})$ is a local system on $(T\times \mathbb{C}^{\times})^{\mathbb{R}}$ with stalks at $(s,\lambda)$ isomorphic to $\phi_{\mathrm{Re}(\lambda f_s)}(\mathcal{F})_x$. Moreover, $\phi_{\mathrm{Re}(\lambda f_s)}(\mathcal{F})_x$ viewed as a local system with respect to $\lambda$ (i.e. $\phi_{pg}(\mathcal{F}_{(V\times\mathbb{C}^{\times})^{\mathbb{R}}})|_{s\times \mathbb{C}^{\times}}$) is exactly $\phi_{f_s}(\mathcal{F})_x$ viewed as a local system on $\mathbb{C}^{\times}$. This implies $\phi_{f_s}(\mathcal{F})_x\cong\phi_{f_s'}(\mathcal{F})_x$ (noncanonically) for any $s,s'\in T$.\\
     
     So, to show (i), it suffices to show that any two ttfun's can be connected by a ttfam. This is a simple exercise: fix a coordinate, expand a ttfun in power series, cut off degree $\geq 3$ terms with a ttfam\footnote{Note that being a ttfun only depends on degree $\leq 2$ terms.}, then observe that the space of all quadratic terms which makes the function a ttfun is a connected complex analytic manifold. (See proof of Lemma \ref{phiindepofttfun} (i) for a detailed argument in the algebraic context.)
\end{proof}

As mentioned in the introduction, Theorem \ref{phistability/C} is a fundamental fact underlying many microlocal-sheaf-theoretic constructions. In particular, to every smooth point $(x, \xi)$ in $SS\mathcal{F}$, this allows us to define the \underline{microstalk ($\mu$\textit{stalk})} of $\mathcal{F}$ at $(x,\xi)$: take $\phi_f(\mathcal{F})_x$ for any ttfun $f$ at $(x,\xi)$. It is an object in  $D^b_c(\mathbb{C}[\mathbb{Z}])$, independent of $f$ in the sense above.\\

Another (related) fundamental feature of real and complex analytic microlocal sheaf theory is its invariance under contact transformations, of which the Radon transform is the prototypical example. We will not discuss the full invariance, but focus on one aspect of it: how microstalks change under the Radon transform.
\begin{radonsetup}[for both complex analytic and algebraic contexts]\label{radonsetup} 
\[\begin{tikzcd}
	& Q \\
	{\mathbb{P}} && {\mathbb{P}^{\vee}}
	\arrow["\p"', from=1-2, to=2-1]
	\arrow["\q", from=1-2, to=2-3]
\end{tikzcd}\]
Here, $n\geq 1$, $\mathbb{P}$ is the abbreviation for $\mathbb{P}^n$ (over the base field), $\mathbb{P}^{\vee}$ is its dual, $Q$ is the universal incidence variety. Let $\mathcal{F}\in D(\mathbb{P})$. Its Radon transform is defined as $R\mathcal{F}=\q_!\p^*\mathcal{F}[n-1]$. Denote by $P(...)$ the projectivisation of $(...)$ after removing the zero section. We have the following facts (see \cite[\nopp 1.6, 3.3]{beilinson_constructible_2016} \footnote{Strictly speaking, \cite{beilinson_constructible_2016} is only in the algebraic context. But the facts also hold in the analytic context: this is clear for (i) and (ii), and the same proof as in \cite[\nopp 3.3]{beilinson_constructible_2016} gives (iii).}):\\
(i) $P(T^*\mathbb{P})\cong Q \cong P(T^*\mathbb{P}^{\vee})$;\\
(ii) Let $z$ be a point in $Q$, $x=\p(z), a=\q(z)$, and let $\xi, \alpha$ be nonzero covectors at $x, a$ which are conormal to the hyperplanes represented by $a, x$,  respectively. Then $z$ is the codirection represented by $\xi, \alpha$ under the identifications in (i). Furthermore, $T_z^*Q$ equals the pushout of $T_x^*\mathbb{P}$ and $T_a^*\mathbb{P}^{\vee}$ along $\langle\xi\rangle$ and $\langle\alpha\rangle$ (via $d\p_x$ and $d\q_a$). We say $(x, \xi)$ and $(a, \alpha)$ correspond to each other;\\
(iii) $SS^+(R\mathcal{F})=\q_{\circ}SS^+(\p^*\mathcal{F})=\q_{\circ}\p^{\circ}SS^+\mathcal{F}$, where $^+$ means adding the zero section.  $PSS\mathcal{F}=PSSR\mathcal{F}$ as subvarieties of $Q$.
\end{radonsetup}

\begin{proposition}\label{radonstability/C}
    With the complex analytic Radon setup \ref{radonsetup}, let $\nu$ be a smooth point in $PSS\mathcal{F}=PSSR\mathcal{F}$. Then, $\mu stalk(R\mathcal{F})_{\nu} \cong \mu stalk(\mathcal{F})_{\nu}$ if $n$ is odd, and $\mu stalk(R\mathcal{F})_{\nu} \cong \mu stalk(\mathcal{F})_{\nu} \otimes \mathcal{K}_2$ if $n$ is even. Here $\mathcal{K}_2 \in D^b_c(\mathbb{C}[\mathbb{Z}])$ is the vector space $\mathbb{C}$ concentrated in degree $0$, with $1\in\mathbb{Z}$ acting by multiplication by $-1$.
\end{proposition}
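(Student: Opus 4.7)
The plan is to compute $\mu stalk(R\mathcal{F})_\nu$ directly by proper base change along $q$, then identify the resulting microstalk of $p^*\mathcal{F}$ with one of $\mathcal{F}$ via Thom--Sebastiani in the fibre direction of $p$. The twist $\mathcal{K}_2^{n-1}$ (trivial for $n$ odd, equal to $\mathcal{K}_2$ for $n$ even) will emerge as the monodromy on the top cohomology of the Milnor fibre of a Morse singularity in $n-1$ variables, matching exactly the parity dichotomy in the statement.

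First I would set up and apply proper base change. Let $z\in Q$ represent $\nu$, with $x=p(z)$, $a=q(z)$, and let $(x,\xi)\in SS\mathcal{F}$, $(a,\alpha)\in SSR\mathcal{F}$ be the corresponding smooth points from Radon setup \ref{radonsetup} ii). Choose a ttfun $g$ of $R\mathcal{F}$ at $(a,\alpha)$, so $\mu stalk(R\mathcal{F})_\nu=\phi_g(R\mathcal{F})_a$, and since $q$ is proper, proper base change for vanishing cycles yields
\[
\mu stalk(R\mathcal{F})_\nu \;\cong\; R\Gamma\bigl(q^{-1}(a),\,\phi_{gq}(p^*\mathcal{F})\bigr)[n-1].
\]
By Radon setup \ref{radonsetup} ii), $d(gq)_z=dq^*\alpha=dp^*\xi$, and $(z,dp^*\xi)$ lies on the smooth locus of $SS(p^*\mathcal{F})=p^\circ SS\mathcal{F}$; the transversality of $\Gamma_{dg}$ with $SSR\mathcal{F}=q_\circ p^\circ SS\mathcal{F}$ at $(a,\alpha)$ transports through the correspondence into transversality of $\Gamma_{d(gq)}$ with $p^\circ SS\mathcal{F}$ at $(z,dp^*\xi)$, so that $gq$ is a ttfun for $p^*\mathcal{F}$ there. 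Consequently $\phi_{gq}(p^*\mathcal{F})$ is supported at $\{z\}$ in a Zariski neighbourhood of $z$. To globalise this on the whole fibre $q^{-1}(a)\cong\mathbb{P}^{n-1}$, I would specialise $g$ to the affine linear function $g_x$ whose zero hyperplane in $\mathbb{P}^\vee$ is the dual of $x\in\mathbb{P}$: a direct duality-pairing computation shows that for $z'=(y,a)\in q^{-1}(a)$ with $y\neq x$, the covector $dq^*(dg_x)_{z'}$ fails to lie in $dp^*(T^*_y\mathbb{P})$, so $\phi_{g_xq}(p^*\mathcal{F})$ must vanish at such $z'$; if $g_x$ happens to fail to be a ttfun at $(a,\alpha)$ because $SSR\mathcal{F}$ is tangent to the horizontal section there, I would perturb $g_x$ by a higher-order term that preserves the local vanishing argument.

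With single-point support secured, $\mu stalk(R\mathcal{F})_\nu \cong \phi_{gq}(p^*\mathcal{F})_z[n-1]$, and microstalk well-definedness lets me compute $\phi_{gq}(p^*\mathcal{F})_z=\mu stalk(p^*\mathcal{F})_{(z,dp^*\xi)}$ with any convenient second ttfun. Take $\tilde f=f\circ p+Q$ at $(z,dp^*\xi)$, where $f$ is a ttfun of $\mathcal{F}$ at $(x,\xi)$ and $Q$ is a non-degenerate quadratic in local fibre coordinates of $p$: this is a ttfun for $p^*\mathcal{F}$ because the Hessian contributed by $Q$ cures the fibre-direction degeneracy of $f\circ p$. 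Thom--Sebastiani (\cite{massey_sebastianithom_2001}) then gives
\[
\phi_{\tilde f}(p^*\mathcal{F})_z \;\cong\; \phi_f(\mathcal{F})_x \,\otimes\, \phi_Q(\mathbb{C})_0,
\]
and the second tensor factor is the vanishing cycle of a Morse singularity in $n-1$ variables: its Milnor fibre is homotopy equivalent to $S^{n-2}$ and its monodromy acts on the top cohomology by $(-1)^{n-1}$, so as an object of $D^b_c(\mathbb{C}[\mathbb{Z}])$ it equals $\mathcal{K}_2^{n-1}$ up to a degree shift which precisely cancels the outer $[n-1]$ together with the Thom--Sebastiani shift. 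This produces
\[
\mu stalk(R\mathcal{F})_\nu \;\cong\; \mu stalk(\mathcal{F})_\nu \,\otimes\, \mathcal{K}_2^{n-1},
\]
which is $\mu stalk(\mathcal{F})_\nu$ for $n$ odd and $\mu stalk(\mathcal{F})_\nu\otimes\mathcal{K}_2$ for $n$ even, as claimed.

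The main obstacle is the global single-point support step: smoothness of $\nu$ only provides this locally at $z$, and extra characteristic points of $gq$ on the hyperplane $a$ correspond to further points of $SS\mathcal{F}$ over $a$ whose conormal directions are dictated by $\alpha$. The linear choice $g_x$ kills these by a clean duality argument, but one must also verify (or arrange by perturbation) that $g_x$ itself is a ttfun for $R\mathcal{F}$ at $(a,\alpha)$; the remaining pieces (proper base change, Thom--Sebastiani, and the Morse monodromy computation) are standard once this localisation is in hand.
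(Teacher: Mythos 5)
Your proposal is correct and follows essentially the same route as the paper: compatibility of vanishing cycles with the proper pushforward $q_*$, localisation of $\phi_{gq}(p^*\mathcal{F})$ to the single point $z$ of $q^{-1}(a)$, independence of the microstalk of $p^*\mathcal{F}$ from the ttfun (Theorem \ref{phistability/C}), and Thom--Sebastiani with a nondegenerate quadratic in the $p$-fibre direction producing the Morse factor $\mathcal{K}_2^{\otimes(n-1)}$. The only (harmless) detour is your special linear choice $g_x$ for the single-point-support step: since every point of $q^{-1}(a)$ sees only $dq^*(dg_a)=dq^*\alpha$, the pushout description of $T^*_{z'}Q$ already rules out intersection with $p^{\circ}SS\mathcal{F}$ at any $z'\neq z$ for an \emph{arbitrary} ttfun $g$ at $(a,\alpha)$, which is exactly the content of the paper's Lemma \ref{radonintersect} i), so no perturbation discussion is needed.
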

In particular, as vector spaces (i.e. as objects in $D^b_c(\mathbb{C})$), microstalks are invariant for all $n$. We will later prove a similar result in the algebraic context (Proposition \ref{phistabradon}). The same argument proves Proposition \ref{radonstability/C}. More precisely, in the argument, one replaces Proposition \ref{dimtotstab} by Theorem \ref{phistability/C}, the Thom-Sebastiani Theorem by its complex analytic counterpart (see, e.g., \cite[Theorem 1.0.1]{schurmann_topology_2003}), and computes $\phi_{y_1^2+...+y_{n-1}^2}(\underline{\mathbb{C}})_0$ inductively using Thom-Sebastiani. We omit the details. This result is not used in the sequel.

\subsection{Algebraic context}\label{sectiononfailure}
We now consider the algebraic context. Let $X$ be a smooth variety over a field $k$ algebraically closed of characteristic $p\geq0$, and $D(X)$ the triangulated category of bounded constructible complexes of étale sheaves of $\mathbb{Z}/\ell^n$-modules. The notion of the singular support $SS\mathcal{F}$ is defined for $\mathcal{F}\in D(X)$ (\cite{beilinson_constructible_2016}). It is a middle dimensional conical closed subset in $T^*X$. As in the analytic case, it records the non-locally-acyclic codirections of $\mathcal{F}$, and has a similar description in terms of test functions and vanishing cycles. The notion of the ttfun (Definition \ref{ttfun}) has the following obvious analogue in this context (see Definition \ref{ttfamdef} for the notion of the ttfam, which is not used in this subsection):
\begin{definition}[transverse test function (algebraic)]\label{ttfun-alg}
    A \underline{transverse test function (ttfun)} of $\mathcal{F}$ at a \emph{smooth} point $(x, \xi)$ of $SS\mathcal{F}$ is a regular function $f$ defined on an open neighbourhood U of $x$ such that the following are satisfied:\\
(i) $f(x)=0$;\\
(ii) the graph $\Gamma_{df}$ of the differential of $f$ intersects $SS(
\mathcal{F})$ only at $(x, \xi)$, and the intersection is transverse.
\end{definition}

In the positive characteristic world, in contrast to the above, singular supports need not be Lagrangian\footnote{Actually, Deligne (\cite{deligne_letter_2015}) showed that on a smooth surface $X$, any middle dimensional conical closed subset in $T^*X$ can be generically realised as a component of some $SS\mathcal{F}$.}. We will later record more new phenomena (§\ref{subsection-example}). In this section, we discuss the failure of the analogue of Theorem \ref{phistability/C}.\\

We will use the following result of Deligne and Laumon to compute the dimensions of vanishing cycles (\cite[\nopp 2.1, 5.1]{laumon_semi-continuite_1981}, see \cite[\nopp 2.12]{saito_characteristic_2017} for a more general version):

\begin{theorem}[Deligne-Laumon]\label{DLthm}
    Let $S$ be a Noetherian excellent scheme, $f: X \rightarrow S$ a separated smooth morphism of relative dimension 1, and $Z$ a closed subscheme of $X$ finite flat over $S$ with a single point in each fibre. Let $\mathcal{F}\in D(X)$ be the !-extension of a tor-finite locally constant sheaf concentrated in degree 0 on $U=X-Z$. Define the $\mathbb{N}$-valued function $a_s$ on the points of $S$: \begin{equation}\label{DLeqn1}
    a_s:= \mathrm{dimtot}((\mathcal{F}|_{X_{\overline{s}}})_{\overline{x}_{\overline{z}}})
    \end{equation} 
    where $\overline{s}$ is a geometric point over $s$ with residue field an algebraic closure of the residue field of $s$, $\overline{z}$ is a geometric point of $Z$ above $\overline{s}$, $\overline{x}_{\overline{z}}$ is a geometric point over the generic point of 
    the strict henselisation of
    $X_{\overline{s}}$ at $\overline{z}$, $\mathrm{dimtot}$ means $\mathrm{sw}+\mathrm{dim}$ (see Conventions).\\\\
    Then:\\
    (i) $a_s$ is constructible, and $a_s\leq a_{\eta}$ if $\eta$ specialises to $s$.\\
    (ii) ($f$,$\mathcal{F}$) is universally locally acyclic (ULA) if and only if $a_s$ is locally constant.\\
    (iii) If $S$ is an excellent strict henselian trait, denote its closed and generic points by $s,\eta$ respectively, then
    \begin{equation}\label{DLeqn2}
        a_s-a_{\eta}=\mathrm{dim}(\phi_f(\mathcal{F}_{\overline{z}})).
    \end{equation}
 \end{theorem}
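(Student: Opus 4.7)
The three parts are interlinked, and I would prove (iii) first, then deduce (ii) from (iii) combined with standard facts about ULA, and prove (i) as a consequence of the Deligne--Laumon semicontinuity of Swan conductors for families of smooth curves.

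For (iii), the statement is local on $X$ near $\overline{z}$, so I would replace $X$ by its strict henselization $X_{(\overline{z})}$, which is a $2$-dimensional regular henselian local scheme mapping to the henselian trait $S$. Because $\mathcal{F} = j_!\mathcal{L}$ is !-extended from $U = X - Z$, the stalk $i^*\mathcal{F}|_{\overline{z}}$ vanishes, and the distinguished triangle $i^*\mathcal{F} \to R\psi_f\mathcal{F} \to R\phi_f\mathcal{F} \xrightarrow{+1}$ degenerates to an isomorphism $R\phi_f\mathcal{F}_{\overline{z}} \cong R\psi_f\mathcal{F}_{\overline{z}}$. The key identification is then $R\psi_f\mathcal{F}_{\overline{z}} = R\Gamma(Y,\mathcal{F}|_Y)$, where $Y := X_{(\overline{z})} \times_S \overline{\eta}$ is a $1$-dimensional affine regular henselian scheme over $\overline{\eta}$ --- a ``formal punctured disk'' whose boundary point is cut out by the generic trace of $Z$. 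A Grothendieck--Ogg--Shafarevich type formula in this punctured henselian disk expresses the dimension of this cohomology in terms of the total dimension (swan plus rank) of $\mathcal{L}$ viewed as a Galois representation of the local field of $Y$ at that boundary point, which is exactly $a_\eta$. A parallel computation on the closed fiber gives $a_s$, and taking the difference yields $\dim R\phi_f\mathcal{F}_{\overline{z}} = a_s - a_\eta$.

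For (i), since $f$ is smooth of relative dimension $1$ and the section $Z \to S$ presents a family of local fields parameterized by $S$, the classical Deligne--Laumon semicontinuity theorem asserts directly that the Swan conductor of $\mathcal{L}$ at $Z$ is a constructible, upper-semicontinuous function of $s \in S$. The rank contribution to $a_s$ is locally constant since $\mathcal{L}$ is locally constant of tor-finite amplitude on $U$. Summing the two gives the constructibility and semicontinuity of $a_s$. For (ii), note that $(f,\mathcal{F})$ is automatically ULA away from $Z$ (where $f$ is smooth and $\mathcal{F}$ locally constant), so ULA reduces to the vanishing of $R\phi_f\mathcal{F}$ at each $\overline{z} \in Z$ over every geometric point of $S$, and this vanishing is stable under base change because both sides of (iii) are. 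By (iii), the vanishing is equivalent to $a_s = a_\eta$ for all specializations $\eta \rightsquigarrow s$, which by (i) is in turn equivalent to $a_s$ being locally constant on $S$.

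The main obstacle is the computation in (iii): identifying $R\psi_f\mathcal{F}_{\overline{z}}$ with the cohomology of a punctured formal disk over $\overline{\eta}$ and applying the GOS formula requires care with the henselian, non-finite-type geometry of the Milnor fiber $Y$ and with matching conventions between the Galois-theoretic definition of $\mathrm{dimtot}$ used to set up $a_s$ and the cohomological computation that produces the dimension of the vanishing cycle.
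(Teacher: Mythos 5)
First, a point of comparison: the paper does not prove this statement at all. It is quoted verbatim as the Deligne--Laumon theorem, with references to Laumon's expos\'e \cite[2.1, 5.1]{laumon_semi-continuite_1981} and to \cite[2.12]{saito_characteristic_2017}, so there is no internal proof to measure your sketch against. Your proposal does reproduce the standard architecture of Deligne's argument (localize at $\overline{z}$, use $\mathcal{F}_{\overline{z}}=0$ to identify $\phi$ with $\psi$, compute the Milnor fibre cohomology, deduce local acyclicity from constancy of $a_s$), but as a proof it has real gaps rather than just unverified routine steps.

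The most serious gap is the step you yourself flag: the ``Grothendieck--Ogg--Shafarevich type formula in this punctured henselian disk.'' This is not an available black box --- it is the actual content of the theorem. The Milnor fibre $Y=X_{(\overline{z})}\times_S\overline{\eta}$ is a non-finite-type henselian curve with a ``boundary,'' and controlling the Euler characteristic of $R\Gamma(Y,j_!\mathcal{L})$ requires showing that no wild ramification contributes along that boundary; this is exactly where the hypothesis that $\mathcal{L}$ is lisse on all of $U$ (not merely on the generic fibre) enters, and the standard route (Laumon, \S\S4--5) proceeds by globalizing to a proper family of curves, applying the usual GOS formula on both fibres, and localizing via compatibility of $\psi$ with proper pushforward. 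Two further problems: (a) your treatment of (i) is circular --- you invoke ``the classical Deligne--Laumon semicontinuity theorem'' to prove part (i), but part (i) \emph{is} that theorem; in the standard proof the inequality $a_s\le a_\eta$ is instead \emph{deduced} from (iii) together with the fact that $\phi_f(j_!\mathcal{L})_{\overline{z}}$ is concentrated in degree $1$, so that $\dim\phi\le 0$. (b) The same concentration statement is needed, and missing, in your deduction of (ii) from (iii): equation (\ref{DLeqn2}) only controls the Euler characteristic of $\phi$, and $a_s=a_\eta$ gives $\dim\phi=0$, which does not imply $\phi=0$ unless one first shows $\phi$ lives in a single degree. (A minor point: the function $a_s$ here is lower, not upper, semicontinuous --- it drops under specialization, as your own reading of (i) requires.)
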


We now give examples showing the failure of the analogue of Theorem \ref{phistability/C}.
 
\begin{example}\label{ex1}
($p>2$, $\mathbb{Z}/\ell$-coefficient) Let $X=\mathbb{A}^2=\mathrm{Spec}( k[x,y])$. Let $\mathcal{F}$ be the Artin-Schreier sheaf determined by the equation $t^p-t=y/x^p$ and a nontrivial character $\mathbb{Z}/p\rightarrow(\mathbb{Z}/\ell)^{\times}$,\footnote{This equation determines a finite \etale Galois covering of $U=X-\{x=0\}$, with Galois group $\mathbb{Z}/p$, corresponding to a surjection $\pi_1(U,\overline{\eta}_U)\twoheadrightarrow\mathbb{Z}/p$. Composing with the character gives a representation of $\pi_1(U,\overline{\eta}_U)$, which is the same thing as a local system on $U$. (We assume such characters exist, that is, $p|(\ell-1)$).} !-extended along $D=\{x=0\}$. One can show $SS\mathcal{F}=T^*_XX \cup \langle dy\rangle_D$, where $\langle dy\rangle_D$ denotes the subspace of $D\times_X T^*X$ consisting of covectors proportional to $dy$ (see \cite[\nopp 3.6]{saito_wild_2017} or Example \ref{cancelss}). Consider the following family of ttfun's at $\nu=((0,0),dy)$: $f_s(x,y):=\frac{y}{1+x}+sx^N$, where $N$ is some integer $\geq$ 3, $s\in k$. It is simple to check that for each fixed $s$, $f_s$ (restricted to some Zariski neighbourhood of $(0,0)$) is indeed a ttfun.
\end{example}

For a fixed $s$, apply Deligne-Laumon to $f_s: U \rightarrow \mathbb{A}^1$, where $U$ is some Zariski open neighbourhood of $(0,0)$ on which $f_s$ is defined. Let $\rho$ be the standard coordinate on $\mathbb{A}^1$. The fibre $f_s^{-1}(\rho)$ is locally isomorphic to $\mathbb{A}^1$ with $x$ as a coordinate. $\mathcal{F}|_{f_s^{-1}(\rho)}$ is the Artin-Schreier sheaf determined by $t^p-t=(\rho-sx^N)(1+x)/x^p$ (!-extended at $x=0$). In formula (\ref{DLeqn1}), $\mathrm{dim}(\mathcal{F}_{\overline{z}})=0, \mathrm{dimtot}((\mathcal{F}|_{X_{\overline{s}}})_{\overline{x}_{\overline{z}}})=1+\mathrm{sw}((\mathcal{F}|_{X_{\overline{s}}})_{\overline{x}_{\overline{z}}})$. The Swan conductors are easily computed:\\

\begin{minipage}[c]{0.5\textwidth}
\centering
\begin{tabular}{|l|l|l|}
\hline
sw$(\rho, s)$ & $s=0$ & $s$ generic \\ 
\hline
$\rho=0$                                                    & 0   & $p-N$       \\ 
\hline
$\rho$ generic                                              & $p-1$ & $p-1$       \\ 
\hline
\end{tabular}
\captionof*{table}{$3\leq N < p$}
\end{minipage}
\begin{minipage}[c]{0.5\textwidth}
\centering
\begin{tabular}{|l|l|l|}
\hline
sw$(\rho, s)$ & $s=0$ & $s$ generic \\ 
\hline
$\rho=0$                                                    & 0   & 0       \\ 
\hline
$\rho$ generic                                              & $p-1$ & $p-1$       \\ 
\hline
\end{tabular}
\captionof*{table}{$N \geq p$}
\end{minipage}\\\\
 
By formula (\ref{DLeqn2}), the dimensions of $\phi_{f_s}(\mathcal{F})$ are as follows:

\begin{table}[H]
\begin{center}
\begin{tabular}{|l|l|l|}
\hline
dim($\phi_{f_s}(\mathcal{F})$) & $s=0$ & $s$ generic \\ 
\hline
$3\leq N < p$                                                    & $-(p-1)$   & $-(N-1)$       \\ 
\hline
$N \geq p$                                              & $-(p-1)$ & $-(p-1)$       \\ 
\hline
\end{tabular}
\end{center}
\end{table}
 
We see that if $p>3$ and $3\leq N < p$, then dim($\phi_{f_s}(\mathcal{F})$) depends on the parameter $s$. So the analogue of Theorem \ref{phistability/C} is false. Nevertheless, if $N \geq p$, then dim($\phi_{f_s}(\mathcal{F})$) does not depend on $s$ (for $s$ in a small neighbourhood of $0\in\mathbb{A}^1$). This is a first indication that vanishing cycles depend on the ttfun only up to a finite jet. We will come back to this in §\ref{subsection on high-jet}.\\

In this example, $SS\mathcal{F}$ is not Lagrangian. Does the analogue of Theorem \ref{phistability/C} hold if restricted to sheaves whose $SS$'s are Lagrangian? The answer is no, as the next example shows:

\begin{example}\label{ex2}
    Same setup and notations as above, but consider the Artin-Schreier sheaf determined by $t^p-t=y/x^{p-1}$. One can show $SS\mathcal{F}=T^*_XX \cup T^*_{(0,0)}X \cup \langle dx\rangle_D$ (see Example \ref{cancelss}). Consider the same $\nu$ and the same family of ttfun's as above.
\end{example}
 
The computation is similar, the results are as follows:\\

\begin{minipage}[c]{0.5\textwidth}
\centering
\begin{tabular}{|l|l|l|}
\hline
sw$(\rho, s)$ & $s=0$ & $s$ generic \\ 
\hline
$\rho=0$                                                    & 0   & $p-N-1$       \\ 
\hline
$\rho$ generic                                              & $p-1$ & $p-1$       \\ 
\hline
\end{tabular}
\captionof*{table}{$3\leq N < p-1$}
\end{minipage}
\begin{minipage}[c]{0.5\textwidth}
\centering
\begin{tabular}{|l|l|l|}
\hline
sw$(\rho, s)$ & $s=0$ & $s$ generic \\ 
\hline
$\rho=0$                                                    & 0   & 0       \\ 
\hline
$\rho$ generic                                              & $p-1$ & $p-1$       \\ 
\hline
\end{tabular}
\captionof*{table}{$N \geq p-1$}
\end{minipage}

\begin{table}[H]
\begin{center}
\begin{tabular}{|l|l|l|}
\hline
dim($\phi_{f_s}(\mathcal{F})$) & $s=0$ & $s$ generic \\ 
\hline
$3\leq N < p-1$                                                    & $-(p-1)$   & $-N$       \\ 
\hline
$N \geq p-1$                                              & $-(p-1)$ & $-(p-1)$       \\ 
\hline
\end{tabular}
\end{center}
\end{table}

We remark that the simplicity of the fundamental group (in particular, that it splits locally, and that all ramifications are tame) is one fundamental reason why in the analytic context vanishing cycles have strong stability, strong enough that they “live” on the cotangent bundle, leading to fundamental constructions in microlocal sheaf theory. In the positive characteristic algebraic context, due to the complexity of the \etale fundamental group (or wild ramifications), the (micro)local data of a sheaf is huge. This is analogous to the distinction between regular holonomic $D$-modules and general holonomic $D$-modules. In the appendix, we list some more analogies and distinctions.

\section{Preliminary discussion on algebraic geometry}
This section discusses some results in algebraic geometry that will be used later. They are crucial in the following, but can be skipped at first reading. In this section, $k$ is algebraically closed of characteristic $p>0$, and, for a variety $X$, a sheaf means an object of $D(X)$ (see Conventions). 

\subsection{The Swan conductor}\label{subsec_swan}
References for this paragraph are \cite[\nopp 1.1]{laumon_semi-continuite_1981} and \cite[\nopp VI.4]{serre_local_1979}. Let $C$ be a strict henselian trait, and $\mathcal{L}$ a tor-finite sheaf at its generic point $\eta$ concentrated in degree 0, given by a Galois representation $G_{\eta}\twoheadrightarrow G\hookrightarrow \mathrm{Aut}_{\mathbb{Z}/\ell^n}(\mathcal{L}_{\overline{\eta}})$. Let $C'\rightarrow C$ be the normalisation of $C$ in the Galois covering of $\eta$ corresponding to $G$, note $C'$ is a trait. The Swan conductor $\mathrm{sw}(\mathcal{L})$ of $\mathcal{L}$ is a non-negative integer measuring the wild ramification of $\mathcal{L}$. It can be computed as follows: consider the filtration $G=G_0\supseteq G_1\supseteq...$ induced by $i_G: G\rightarrow \mathbb{N}\cup\{\infty\}, \sigma\mapsto v'(\sigma(\pi')-\pi')$ if $\sigma\neq \mathrm{id}$; $\infty$ if $\sigma=\mathrm{id}$, where $\pi'$ is any uniformiser of $C'$, $v'$ is the discrete valuation on $C'$, and $G_i=\{\sigma\in G|\,\, i_G(\sigma)\geq i+1\}$. Then $$\mathrm{sw}(\mathcal{L})=\sum_{i\geq 1}\frac{\mathrm{dim}(\mathcal{F}_{\overline{\eta}}/\mathcal{F}_{\overline{\eta}}^{G_i})}{[G:G_i]}$$

The following geometric interpretation of $i_G$ will be important for us. Consider the $G$-action on $C'$. For $\sigma\neq \mathrm{id}$, we have $i_G(\sigma)=(\Gamma_{\sigma}\cdot \Delta_{C'})$, where the latter is the intersection number of the graph of $\sigma$ and the diagonal. Note that, if $I_{\sigma,C'}$ is the ideal on $C'$ corresponding to this intersection, then $(\Gamma_{\sigma}\cdot \Delta_{C'})=\lambda_{\mathcal{O}_{C'}}(\mathcal{O}_{C'}/I_{\sigma,C'})$, where $\lambda$ denotes the length as a module.

\subsection{Blowup bounds}\label{subsec_blowup_bound}
The materials in this subsection will be used in the proof of Theorem \ref{thmstab}.

\begin{terminology}\label{BUstages}
    Let $X=X_0\xleftarrow{a_1} X_1\xleftarrow{a_2}X_2\xleftarrow{a_3}\cdots\xleftarrow{a_n}X_n$ be a blowup sequence of length $n$ of a smooth variety $X$, such that each $a_i$ is a blowup at finitely many closed points of $X_{i-1}$. If, for every $2\leq i\leq n$, $a_i$ is the blowup at a single point lying in the exceptional divisor created by $a_{i-1}$, then we say this blowup sequence is monotone and has $n$ \underline{blowup stages}. In general, we say it has $r$ blowup stages if, among all renumberings of the blowups such that each blowup is at a single point, the longest monotone subsequence of blowups (necessarily starting from $X$) has length $r$.
\end{terminology}

For example, the following blowup sequence of length 4 has 3 blowup stages.

\begin{figure}[H]
\centering
\includegraphics[width=0.75\textwidth]{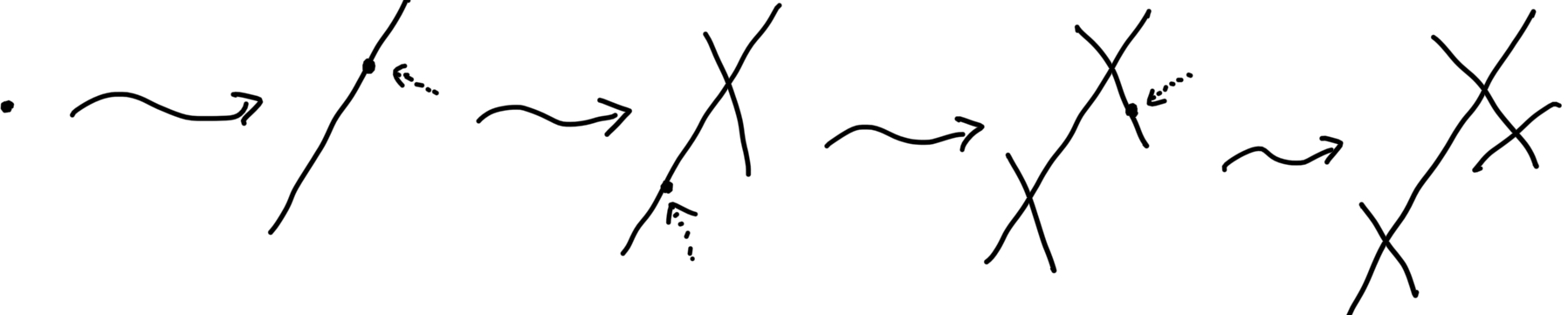}
\caption{A 3-stages blowup sequence on a surface\label{fig:BUstages}}
\end{figure}

\begin{definition}[$ep(I)$]
    Let A be a Noetherian ring, and I an ideal. Define $ep(I)$ (ep stands for épaisseur) to be the smallest $r \in \mathbb{N}$ such that $(\sqrt{I})^r\subseteq{I}$. Note $ep(I)$ exists as $A$ is Noetherian.
\end{definition}

The number $ep(I)$ measures of the “thickness” of $I$. The lemma says it can be computed locally, and in the completion.

\begin{lemma}\label{eplocal}
    (i) ep(I)= $\mathrm{sup}_x(ep(I_x))$, where $x$ ranges through all closed points of $\mathrm{Spec}(A)$, and $I_x$ denotes the localisation of $I$ at $x$.\\
    (ii) Assume A is Noetherian local excellent, then $ep(I)=ep(\hat{I})$, where $\hat{I}$ is the completion of I with respect to the maximal ideal. 
\end{lemma}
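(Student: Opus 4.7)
The plan is to reduce both statements to standard facts about localization and completion, together with the observation that $ep(I) \le r$ is witnessed by the single ideal containment $(\sqrt{I})^r \subseteq I$, which is preserved and reflected by suitable ring extensions.

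For (i), I will use two elementary facts: ideal containment $J \subseteq K$ in $A$ is equivalent to $J_\mathfrak{m} \subseteq K_\mathfrak{m}$ at every maximal ideal $\mathfrak{m}$, and taking radicals commutes with localization, i.e.\ $(\sqrt{I})_\mathfrak{m} = \sqrt{I_\mathfrak{m}}$. Applied to $J = (\sqrt{I})^r$ and $K = I$ this gives
\[
(\sqrt{I})^r \subseteq I \iff (\sqrt{I_x})^r \subseteq I_x \text{ for every closed point } x,
\]
so taking the smallest such $r$ on each side yields $ep(I) = \sup_x ep(I_x)$. The supremum is finite because any $r$ witnessing $ep(I)$ (which exists by Noetherianness) dominates every $ep(I_x)$.

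For (ii), let $\iota : A \to \hat A$ be the completion map. I will use two properties: (a) $\hat A$ is faithfully flat over $A$, so $J \subseteq K$ iff $J\hat A \subseteq K\hat A$ for ideals of $A$, and $(JK)\hat A = (J\hat A)(K\hat A)$; (b) excellence of $A$ means $\iota$ is regular (geometrically regular fibres), so extension along $\iota$ preserves reducedness of quotient rings, equivalently $\sqrt{I\hat A} = \sqrt{I}\,\hat A$ (the nontrivial inclusion $\sqrt{I}\,\hat A \supseteq \sqrt{I\hat A}$ comes from $\hat A / \sqrt{I}\,\hat A$ being reduced, the reverse being automatic). Chaining these,
\[
(\sqrt{I})^r \subseteq I \iff (\sqrt{I}\,\hat A)^r \subseteq I\hat A \iff (\sqrt{I\hat A})^r \subseteq I\hat A,
\]
which yields $ep(I) = ep(\hat I)$.

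The only nontrivial ingredient is the identity $\sqrt{I \hat A} = \sqrt{I}\,\hat A$, which can fail for general Noetherian local rings; it is exactly where the excellence hypothesis is needed, and I would import it as a black box from the standard theory of excellent rings (e.g.\ \cite{matsumura_commutative_1980} or \cite{EGAIV}). Everything else is a formal manipulation of ideals under faithfully flat extensions.
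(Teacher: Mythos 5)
Your proof is correct and follows essentially the same route as the paper: part (i) via localisation commuting with radicals plus local checking of containments, and part (ii) via excellence giving $\sqrt{I}\,\hat A=\sqrt{I\hat A}$ (reducedness of $\widehat{A/\sqrt I}$) together with descent of ideal containments along completion. The only cosmetic difference is that you package the descent step as faithful flatness of $A\to\hat A$, whereas the paper invokes $\hat I\cap A=I$ (ideals being closed in the $\mathfrak m$-adic topology) — these are the same standard fact.
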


\begin{proof} 
    (i) This follows from two standard commutative algebra facts: (a) localisation commutes with taking radicals; (b) inclusion relations of ideals can be checked by localisations at all closed points.\\
    
    (ii) First note that $\hat{A}$ is Noetherian (\cite[\nopp 23.K]{matsumura_commutative_1980}), and $A$ injects into $\hat{A}$ because its topology is Hausdorff ((11.D) in \textit{loc. cit.}). $A/\sqrt{I}$ being reduced implies, by excellence of $A$, $\hat{A}/\sqihat=\widehat{A/\sqrt{I}}$ is reduced, so $\sqihat$ is radical. But $\sqihat$ is contained in $\sqrt{\hat{I}}$, because if $x_{\infty}\in \sqihat$, let $\{x_i\}\subseteq \sqrt{I}$ converge to it, then $\{x_i^{ep(I)}\}\subseteq I$ converges to $x_{\infty}^{ep(I)}$, so $x_{\infty}^{ep(I)}$ lies in $\hat{I}$. So $\sqihat=\sqrt{\hat{I}}$. This argument also shows $ep(\hat{I})\leq ep(I)$. For the converse, notice $\sqrt{I}^{ep(\hat{I})}\subseteq (\sqihat)^{ep(\hat{I})}= \sqrt{\hat{I}}^{ep(\hat{I})}\subseteq \hat{I}$, so $\sqrt{I}^{ep(\hat{I})}\subseteq \hat{I}\cap A=I$, where the last step is because $\hat{I}\cap A$ is the closure of $I$ in $A$ and ideals of $A$ are closed in $A$ ((24.A) in \textit{loc. cit.}).
\end{proof}

\begin{lemma}\label{blowupmult}
    Let $C$ be a curve on a smooth surface $X$. Suppose $C$ is singular at a closed point $x$. Let $\mathrm{mult}_x(C)$ be the multiplicity of $C$ at $x$ (see, e.g., \cite[\nopp 1.20]{kollar_lectures_2007}). Then, after $M$ stages of blowups (Terminology \ref{BUstages}), the largest multiplicity of $\overline{X}\times_XC$ at its singularities is at most $2^{M-1}\cdot \mathrm{mult}_x(C)$. Here $\overline{X}$ is the result after blowups. Note $\overline{X}\times_XC$ includes the exceptional divisors together with their multiplicities.
\end{lemma}

\begin{proof}
   After the first blowup $X\leftarrow X_1$, the strict transform $C_1$ of $C$ has $\mathrm{mult}_{\overline{x}}(C_1)\leq\mathrm{mult}_x(C)$ at each point $\overline{x}$ above $x$, and the exceptional divisor $E_1$ is contained in $X_1\times_XC$ with multiplicity $\mathrm{mult}(E_1, X_1\times_XC)=\mathrm{mult}_x(C)$ (see, e.g., \cite[\nopp 1.38, 1.40]{kollar_lectures_2007}). In the second blowup (at a closed point in $E_1$), $C_2$ still has multiplicities $\leq \mathrm{mult}_x(C)$, and $\mathrm{mult}(E_2, X_2\times_XC) \leq\mathrm{max}_{\{\overline{x}\}}\{\mathrm{mult}_{\overline{x}}(C_1)\}+\mathrm{mult}(E_1, X_1\times_XC)\leq 2\cdot \mathrm{mult}_x(C)$. Iterate. 
\end{proof}

\begin{lemma}\label{epcomputation}
    Let X be a smooth surface, $\sigma\neq \mathrm{id}$ an automorphism of X, $X^{\sigma}_{red}$ the fixed locus, and $x_0\in X^{\sigma}_{red}$ a closed point. Assume $X^{\sigma}_{red}$ is an irreducible divisor smooth at $x_0$. Let $X\xleftarrow{a_1} X_1\xleftarrow{a_2}X_2\xleftarrow{a_3}\cdots\xleftarrow{a_n}X_n$ be a blowup sequence of $M$ stages, such that $a_1$ is the blowup at $x_0$, and each $a_i$ is a blowup at closed points fixed by (extensions of) $\sigma$. Then, $ep(I_{\tilde{\sigma}})\leq (2p+1)^M\cdot ep(I_{\sigma})$, where $\tilde{\sigma}$ denotes the extension of $\sigma$ to $X_n$.
\end{lemma}

Here, the extension of $\sigma$ to blowups is induced by its natural action on the tangent spaces. In the statement and proof of this lemma, we suppress the space in the notation for the ideal of fixed points. For example, in the statement, $I_{\tilde{\sigma}}$ is the abbreviation for $I_{\tilde{\sigma}, X_n}$. We will implicitly use Lemma \ref{eplocal} in the proof.

\begin{proof}
     Clearly, it suffices to show the case when the blowup sequence is monotone (Terminology \ref{BUstages}). Note $\sigma$ induces an automorphism of the Zariski localisation of $X$ at $x_0$, so we may assume $X=\mathrm{Spec}(A)$ with $A$ local. We need to analyse all possible configurations of fixed points in the successive blowups.\\
     
     The configuration we start with is $X^{\sigma}_{red}=$ a smooth curve. The blowup replaces $x_0$ by an exceptional divisor isomorphic to $\mathbb{P}^1$. Denote the extension of $\sigma$ by $\overline{\sigma}$. Then $\overline{\sigma}$ acts linearly on $\mathbb{P}^1$ (which is just the derivative of $\sigma$ at $x_0$). There are three possibilities\footnote{Indeed, let $\overline{\sigma}|_{\mathbb{P}^1}$ be represented by $[u:v]\mapsto [\alpha u+\beta v: \gamma u+\delta v]$. As $\sigma$ is an automorphism, $\begin{pmatrix}\alpha & \beta\\ \gamma & \delta \end{pmatrix}$ is in $\mathrm{GL}_2(k)$. The fixed points correspond to the eigenvectors of this matrix, and the three possibilities above correspond to the three possibilities of its Jordan normal form.}: (a) $\overline{\sigma}|_{\mathbb{P}^1}$ fixes $\mathbb{P}^1$; (b) $\overline{\sigma}|_{\mathbb{P}^1}$ fixes two points, each with multiplicity 1; (c) $\overline{\sigma}|_{\mathbb{P}^1}$ fixes a single point with multiplicity 2. Then perform the second blowup, and so on. In Figure \ref{fig:fixedptconfig}, we draw all possible local configuration changes under blowups. Solid lines and points represent fixed points. The dotted arrows represent the point of blowup. Each crossing is a simple normal crossing. One can analyse the change of $ep(I_{\sigma})$ in all cases and find the desired estimate. We illustrate with two cases, the others are similar.

\begin{figure}[H]
\centering
\includegraphics[width=0.8\textwidth]{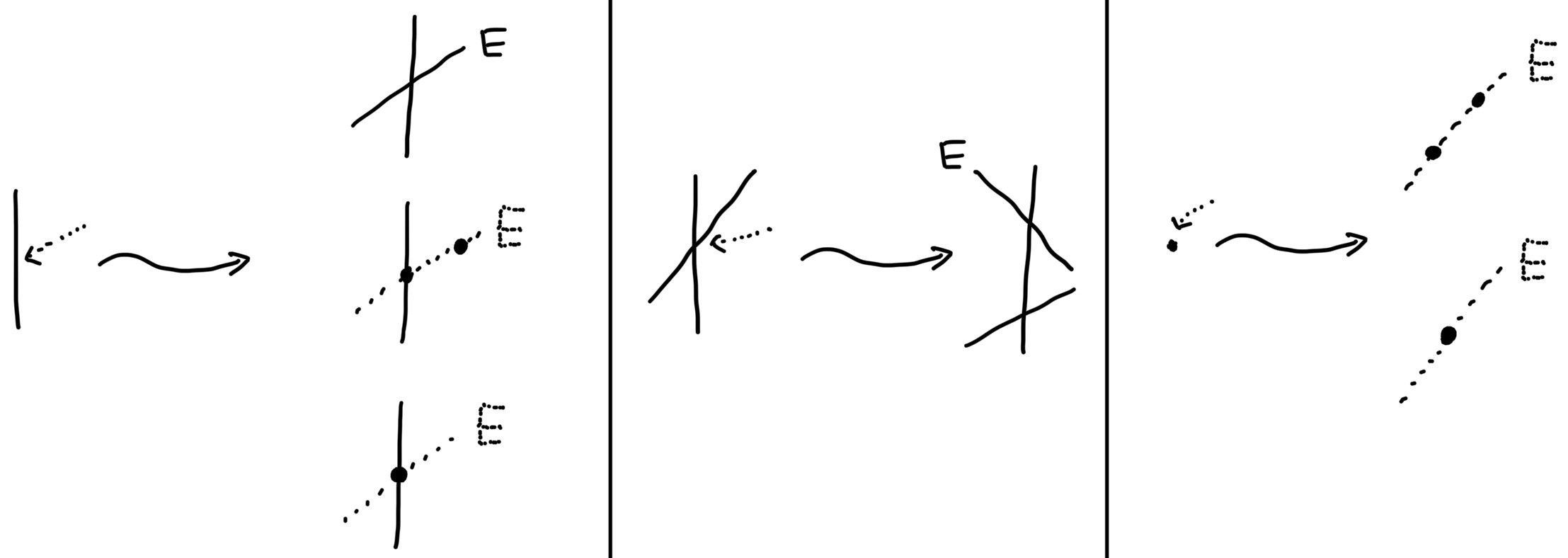}
\caption{Possible configuration changes of fixed points\label{fig:fixedptconfig}}
\end{figure}

     To separate notations, in the following we will use $W, w, \sigma$, etc. to denote the starting space, point, action, etc. and $\overline{W},\overline{w},\overline{\sigma}$, etc. to denote those after one blowup. We abbreviate $ep(I_{\sigma}), ep(I_{\overline{\sigma}})$ as $ep, \overline{ep}$. Let $\pi$ be the projection  $\overline{W}\rightarrow W$.\\
     
     General setup and observations: choose local coordinates $(x,y)$ at $w$, use coordinates $((x,y),[u:v])$ on $\overline{W}$. If $W=\mathrm{Spec}(A)$ then $\overline{W}$ is $\mathrm{Spec}(A[\frac{x}{y}])$ in the $(y,u)$-chart and is $\mathrm{Spec}(A[\frac{y}{x}])$ in the $(x,v)$-chart. Denote $\sigma^*$ the corresponding action on $A$. Then, essentially by definition, $I_{\sigma}=(\sigma^*(a)-a)$ where $a$ ranges through elements in $A$. Write $\sigma^*(x)=x+f$ and $\sigma^*(y)=y+g$, for some $f,g\in I_{\sigma}$. Note $\pi^*(I_{\sigma})\subseteq I_{\overline{\sigma}}$.\footnote{This is one precise sense how the blowup “improves the situation”.}\\
     
     Case (i). Line to cross (left column top): we may assume $W^{\sigma}_{red}=\{x=0\}$. We have $\sqrt{I_{\sigma}}=(x)$ and $x^{ep}\in I_{\sigma}$. We look at $(y,u)$-chart, the other chart is similar. Denote by $I_{\overline{\sigma},(y,u)}$ the restriction of $I_{\overline{\sigma}}$ to the $(y,u)$-chart. Then $\sqrt{I_{\overline{\sigma},(y,u)}}=(yu)$. Since $\pi^*(I_{\sigma})\subseteq I_{\overline{\sigma}}$, $x^{ep}$ lies in $I_{\overline{\sigma},(y,u)}$. But in the $(y,u)$-chart $x=yu$, so $(yu)^{ep}\in I_{\overline{\sigma}}$, hence $\overline{ep}_{(y,u)}\leq ep$.\\
     
     Case (ii). Point to point (right column bottom): we have $\sqrt{I_{\sigma}}=(x,y)$ and $(x,y)^{ep}\subseteq I_{\sigma}$. Without loss of generality, assume that the new fixed point $\overline{w}$ lies in the $(y,u)$-chart, with coordinate $(0,u_1)$. Consider the completion of $\overline{W}$ at $\overline{w}$. Denote by $I_{\overline{\sigma},\hat{w}}$ the completion of $I_{\overline{\sigma}}$ at $\overline{w}$. Then $\sqrt{I_{\overline{\sigma},\hat{w}}}=(y,u-u_1)$.\\
     
     We now exhibit some elements in $I_{\overline{\sigma},\hat{w}}$, which will be sufficient for estimating $\overline{ep}$. Since $\overline{\sigma}^*(u)=\overline{\sigma}^*(x/y)=\frac{x+f}{y+g}=\frac{u+f/y}{1+g/y}$, we have $\overline{\sigma}^*(u)-u=\frac{f/y-ug/y}{1+g/y}\in I_{\overline{\sigma},(y,u)}$. Upon localising to $\overline{w}$, $1+g/y$ becomes a unit, so $f/y-ug/y\in I_{\overline{\sigma},\hat{w}}$. On the other hand, we know $(x,y)^{ep}\subseteq I_{\sigma}$ and $\pi^*(I_{\sigma})\subseteq I_{\overline{\sigma}}$. This implies $y^{ep}\in I_{\overline{\sigma},\hat{w}}$.\\
     
     Expand $f,g$ in power series in $y$ and $u'=u-u_1$, the terms in $f/y-ug/y$ which do not involve $y$ are precisely the terms in the quadratic equation on $\mathbb{P}^1$ for the fixed point $\overline{w}$. So $f/y-ug/y\in k[[y,u']]$ is of the form $au'^2$ or $au'^2+y(...)$ with $(...)\neq 0$, for some $a\neq 0$ in $k$. It is an exercise to see that in the former case $\overline{ep}\leq ep+2$, and in the latter case $\overline{ep}\leq (2p+1)\cdot ep$.\footnote{Proof in the latter case: $\forall p^m\in\mathbb{N}$, $(au'^2+y(...))^{p^m}=a^{p^m}u'^{2p^m}+y^{p^m}(...)^{p^m}\in I_{\overline{\sigma},\hat{w}}$. If $p^m$ is the smallest integer $\geq ep$ (which is clearly $< p\cdot ep$), then, using $y^{ep}\in I_{\overline{\sigma},\hat{w}}$, we get $u'^{2p^m}\in I_{\overline{\sigma},\hat{w}}$, so $\overline{ep}\leq ep+2p^m$.}
\end{proof}

The aim of the rest of this subsection is Proposition \ref{prop_boundonBUstages}. We first fix the setup and have some preliminary discussion. Let $X$ be a smooth surface, $D$ a divisor which is reduced and irreducible, $U=X-D$ the complement, $U'\rightarrow U$ a Galois covering with Galois group $G$, $\pi: X'\rightarrow X$ the normalisation of $X$ in $U'$, $D'=X'\times_X D$ the pullback, and $x\in D$ a closed point. Note $\pi$ is finite. The following lemma is standard, we include a proof for completeness.

\begin{lemma}
    With the above setup, let $X_{\hat{x}}$ be the completion of $X$ at $x$, and $X'_{\hat{x}}=X'\times_XX_{\hat{x}}$. Then $X'_{\hat{x}}\cong\bigsqcup_{x'\in\pi^{-1}(x)}X'_{\hat{x}'}$, where $X'_{\hat{x}'}$ is the completion of $X'$ at $x'$. Moreover, for each $x'$, $X'_{\hat{x}'}\rightarrow X_{\hat{x}}$ is the normalisation of $X_{\hat{x}}$ in the function field $K(X'_{\hat{x}'})$, which is a finite separable extension of $K(X_{\hat{x}})$.
\end{lemma}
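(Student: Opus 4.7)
The plan is to combine two standard facts: for a finite algebra over a Noetherian local ring, the $\mathfrak{m}$-adic completion decomposes as a product over the maximal ideals upstairs; and for excellent rings, normality is preserved under completion. Once these are granted, the rest of the statement reduces to bookkeeping with fraction fields and generic fibres.

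For the decomposition $X'_{\hat{x}}\cong\bigsqcup_{x'\in\pi^{-1}(x)}X'_{\hat{x}'}$, observe that since $\pi$ is finite, $A:=(\pi_*\mathcal{O}_{X'})_x$ is a finite $\mathcal{O}_{X,x}$-algebra, hence semi-local, whose maximal ideals correspond bijectively (via going up/down) to the points $x'\in\pi^{-1}(x)$, with $A_{\mathfrak{m}_{x'}}=\mathcal{O}_{X',x'}$. The standard structure theorem for completions of finite semi-local extensions (see, e.g., Matsumura, \emph{Commutative Ring Theory}, Thm.~8.15) then gives $\widehat{A}\cong\prod_{x'}\widehat{\mathcal{O}_{X',x'}}$, where the completion on the left is taken with respect to $\mathfrak{m}_xA$, equivalently the Jacobson radical of $A$. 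Finiteness of $A$ over $\mathcal{O}_{X,x}$ yields $\widehat{A}=A\otimes_{\mathcal{O}_{X,x}}\widehat{\mathcal{O}_{X,x}}$, so taking $\mathrm{Spec}$ produces the claimed disjoint union.

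For the identification of each $X'_{\hat{x}'}\to X_{\hat{x}}$ as a normalisation, I will show that $\widehat{\mathcal{O}_{X',x'}}$ is a normal domain finite over $\widehat{\mathcal{O}_{X,x}}$; being a finite normal extension of a Noetherian normal domain, it is automatically the integral closure of $\widehat{\mathcal{O}_{X,x}}$ in its own fraction field $K(X'_{\hat{x}'})$. Since $X$ is a variety it is excellent, and $X'\to X$ is finite, so $X'$ is excellent as well; by construction $X'$ is the normalisation of $X$ in $U'$, hence each $\mathcal{O}_{X',x'}$ is a normal Noetherian local domain. The key ingredient — that excellent rings have well-behaved completions, and in particular that normality ascends from $R$ to $\widehat R$ — then implies $\widehat{\mathcal{O}_{X',x'}}$ is normal, and as a Noetherian normal local ring it has a unique minimal prime and is therefore a domain. (Note $\widehat{\mathcal{O}_{X,x}}$ is itself regular since $X$ is smooth, so it is normal and the statement is meaningful.)

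Finally, for the finiteness and separability of $K(X'_{\hat{x}'})/K(X_{\hat{x}})$: finiteness is immediate from finiteness of $\widehat{\mathcal{O}_{X',x'}}$ over $\widehat{\mathcal{O}_{X,x}}$. For separability I compute the generic fibre of $X'\times_X X_{\hat{x}}\to X_{\hat{x}}$ in two ways: using the decomposition already obtained it equals $\mathrm{Spec}\bigl(\prod_{x'}K(X'_{\hat{x}'})\bigr)$, while directly it equals $\mathrm{Spec}\bigl(K(X')\otimes_{K(X)}K(X_{\hat{x}})\bigr)$. Hence $K(X')\otimes_{K(X)}K(X_{\hat{x}})\cong\prod_{x'}K(X'_{\hat{x}'})$ as $K(X_{\hat{x}})$-algebras. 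Since $U'\to U$ is \'etale Galois, $K(X')/K(X)$ is finite separable, and separability is preserved under arbitrary base change of fields, so each factor $K(X'_{\hat{x}'})/K(X_{\hat{x}})$ is finite separable. The only nontrivial input throughout the argument is the preservation of normality under completion for excellent rings; once that is invoked, every other step is elementary.
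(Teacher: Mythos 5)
Your proposal is correct and follows essentially the same route as the paper: finiteness of the normalisation over the local ring, the product decomposition of the completion of a semi-local finite algebra, excellence to carry normality through completion, and base change for separability. The only difference is presentational — you invoke ``normality ascends to the completion of an excellent ring'' as a black box where the paper unfolds it via geometric regularity of the formal fibres (Matsumura (21.E)), and your two-sided computation of the generic fibre makes explicit what the paper compresses into ``separability follows by base change.''
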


\begin{proof}
    Let $X_{x}=\mathrm{Spec}(A), X_{\hat{x}}=\mathrm{Spec}(\hat{A})$. Then $ X'\times_X X_{x}=\mathrm{Spec}(A'_L)$, where $A'_L$ is the normalisation of $A$ in the finite separable extension $L=K(X')$ of $K(X)=Q(A)$ ($Q$ denotes the total ring of fractions). As $A$ is Japanese,  $A'_L$ is a finite $A$-module, so $A'_L\otimes_A \hat{A}\cong \hat{A}'_L$ by \cite[\nopp 23.L, Theorem 55]{matsumura_commutative_1980}, where $\hat{A}'_L$ is the completion of $A'_L$ as an $A$-algebra at $\mathfrak{m}_A$. $\hat{A}'_L$, being finite over a local ring, is semi-local, and its maximal ideals are in bijection with $\pi^{-1}(x)$. Apply (24.C) in \textit{loc. cit.}, we get $\hat{A}'_L\cong \Pi_{x'\in \pi^{-1}(x)} (\hat{A}'_L)_{\hat{x}'}$ (the product of completions of $\hat{A}'_L$ at $x'$)\footnote{Note $V(\mathfrak{m}_{\hat{A}'_L})=V(\mathfrak{m}_{\hat{A}}\hat{A}'_L)$, so $\hat{A}'_L$ is already complete with respect to $\mathfrak{m}_{\hat{A}'_L}$.}. This proves the first statement.\\

    For the second statement, note we have the inclusions $\hat{A}\subseteq \hat{A}'_L\subseteq Q(\hat{A}'_L)$. It suffices to show $\hat{A}'_L$ is normal. In fact, $\hat{A}'_L$ is finite hence integral over $\hat{A}$, if it is normal, then it is the integral closure of $\hat{A}$ in $Q(\hat{A}'_L)$. Further, $Q(\hat{A}'_L)\cong \Pi_{x'\in \pi^{-1}(x)}Q((\hat{A}'_L)_{\hat{x}'})$, so for each $x'$, $(\hat{A}'_L)_{\hat{x}'}$ is the integral closure of $\hat{A}$ in $Q((\hat{A}'_L)_{\hat{x}'})$, which is a finite extension of $Q(\hat{A})$. The separability follows from the separability of $U'$ over $U$ and base change. To see $\hat{A}'_L$ is normal, consider 
\[\begin{tikzcd}
	{A'_L} & {\hat{A}'_L} \\
	A & {\hat{A}}
	\arrow[from=2-1, to=2-2]
	\arrow[from=2-1, to=1-1]
	\arrow[from=1-1, to=1-2]
	\arrow[from=2-2, to=1-2]
	\arrow["\lrcorner"{anchor=center, pos=0.125, rotate=-90}, draw=none, from=1-2, to=2-1]
\end{tikzcd}\]

     The formal fibres of $A'_L$ are base changes of formal fibres of $A$ under finite field extensions. Since $A$ is excellent, its formal fibres are geometrically regular, so the formal fibres of $A'_L$ are also geometrically regular. So the normality of $A'_L$ implies the normality of $\hat{A}'_L$ by (21.E) in \textit{loc. cit.}
\end{proof}

Now consider $X'_{\hat{x}'}\rightarrow X_{\hat{x}}$ for a fixed $x'$. Suppose further that $X'$ is smooth at $x'$. Denote the function rings of $X_{\hat{x}}$ and $X'_{\hat{x}'}$ by $A$ and $R$, respectively (note the notations are different from those in the above proof). Let $C=V(h), h\in \mathfrak{m}_A$ be a smooth curve in $X_{\hat{x}}$, with no common components with $D_{\hat{x}}$, where $D_{\hat{x}}:=D\times_XX_{\hat{x}}=V(g)$, for some $g\in\mathfrak{m}_A$. Denote $A'=A/hA, R'=R/hR$. Let $\mathfrak{C}(R/A)$ be the Dedekind codifferent. We refer to \cite[\nopp 0BW0 and related tags]{Stacks} for generalities on Dedekind codifferents. We summarise what we need in the following:

\begin{facts}[about Dedekind codifferents]
     Notations here are separate from the above. Let $A\rightarrow B$ be a map between Noetherian rings. Assume the map is finite, any nonzerodivisor in $A$ maps to a nonzerodivisor in $B$, and $K\rightarrow L$ is \etale, where $K:=Q(A), L:=Q(A)\otimes_A B$. Under these assumptions, $\mathfrak{C}(B/A)$ is defined to be $\{x\in L| \mathrm{Tr}_{L/K}(xb)\in A, \forall b\in B\}$. Assume further $A\rightarrow B$ is flat. We have:\\\\
     (a) $\mathfrak{C}(B/A)$ is a finite $A$-module (being the inverse of an ideal of a Noetherian ring (\cite[\nopp 0BW1]{Stacks})).\\\\
     (b) If $A\rightarrow B$ is a local complete intersection map, then $\mathfrak{C}(B/A)$ equals the inverse of the Kähler different (\cite[\nopp 0BW1, 0BW5]{Stacks}). Note, with our assumptions, the Kähler different is the same as the Jacobian ideal (see \cite[\nopp 4.4]{swanson_integral_2006} for generalities on Jacobian ideals). Both equal the ideal generated by (the image in $B$ of) $\Delta:=\mathrm{det}(\partial f_i/\partial x_j)$, where $B\cong A[x_1,...,x_n]/(f_1,...,f_n)$ is any presentation of $B$ as an $A$-algebra.
\end{facts}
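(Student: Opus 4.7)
The plan is to handle (a) by a standard fractional-ideal argument (realizing $\mathfrak{C}(B/A)$ as $b^{-1} I$ for an ideal $I$ of the Noetherian ring $B$) and to handle (b) by separating the equality of the Kähler different with the Jacobian ideal (a direct Fitting computation) from the equality of the codifferent with the inverse Kähler different (a classical consequence of duality for regular immersions).

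For (a), the first step is to verify that $\mathfrak{C}(B/A)$ is a $B$-submodule of $L$: if $x \in \mathfrak{C}(B/A)$ and $c \in B$, then $\mathrm{Tr}_{L/K}(cx \cdot b) = \mathrm{Tr}_{L/K}(x \cdot cb) \in A$ for all $b \in B$. Next, since $K \to L$ is finite \etale, the trace pairing on $L$ is nondegenerate; and because $B$ is finite over $A$ with $L = K \otimes_A B$, a set of $A$-generators of $B$ spans $L$ over $K$, so I can extract a $K$-basis $e_1, \ldots, e_n \in B$ of $L$. Writing any $x \in \mathfrak{C}(B/A)$ in the dual basis $e_1^*, \ldots, e_n^*$ as $x = \sum c_i e_i^*$, I get $c_j = \mathrm{Tr}_{L/K}(x e_j) \in A$, so $\mathfrak{C}(B/A) \subseteq \bigoplus_i A e_i^*$. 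Clearing denominators with a single nonzerodivisor $b \in A$ (which remains a nonzerodivisor in $B$ by hypothesis) yields $b \cdot \mathfrak{C}(B/A) \subseteq B$, realizing $b \cdot \mathfrak{C}(B/A)$ as an ideal of the Noetherian ring $B$. It is therefore finite over $B$, hence over $A$, and multiplication by $b^{-1}$ gives the same conclusion for $\mathfrak{C}(B/A)$.

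For (b), the first assertion---that the Kähler different equals $(\Delta)$---follows directly from the definition: given a presentation $B = A[x_1,\ldots,x_n]/(f_1,\ldots,f_n)$, with the $f_i$ a regular sequence by the lci and relative-dimension-zero hypotheses, the conormal sequence yields the presentation $B^n \xrightarrow{J} B^n \to \Omega_{B/A} \to 0$ with $J$ the Jacobian matrix, and the zeroth Fitting ideal is then $(\det J) = (\Delta)$. The second assertion $\mathfrak{C}(B/A) = \Delta^{-1} B$ is more delicate. I would factor $A \to B$ through $B' := A[x_1,\ldots,x_n]$, where $B' \to B$ is a regular immersion of codimension $n$. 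The Koszul resolution identifies $\mathrm{Ext}^n_{B'}(B, B')$ with $B$ via a canonical generator dual to $\Delta$; combining this with the Tate trace formula for complete intersections converts the Dedekind codifferent $\{x \in L : \mathrm{Tr}_{L/K}(xB) \subseteq A\}$ into $\Delta^{-1} B$.

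The main obstacle is this final identification in (b): it translates between the trace-theoretic definition of $\mathfrak{C}(B/A)$ and the Koszul/duality side, and is essentially a local Grothendieck duality statement requiring the Tate residue formula. Since the paper cites this as a standard fact via Stacks tags [0BWG, 0BW5], the most efficient route is to invoke those directly rather than reproduce the residue computation; what I would write out explicitly are only the fractional-ideal argument in (a) and the Fitting-ideal half of (b), borrowing the codifferent-equals-inverse-Kähler equality from Stacks.
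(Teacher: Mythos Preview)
Your proof sketch is essentially correct, but there is nothing to compare it against: the paper does not prove this statement. It is presented under a \texttt{facts} environment precisely because it is a summary of known results, each item carrying a citation to the Stacks Project (tags 0BW1, 0BWG, 0BW5) or to Swanson--Huneke. The paper's ``proof'' consists entirely of those references.

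Your argument for (a) is the standard one and is fine. For (b), your decomposition into ``K\"ahler different $=$ Jacobian ideal'' (Fitting computation) and ``codifferent $=$ inverse K\"ahler different'' (duality/Tate trace) is the right structure, and you correctly identify the second half as the nontrivial input that one would simply cite. That is exactly what the paper does: it defers to the Stacks tags rather than reproducing the residue computation. So your proposal is not so much a different route as an expansion of what the paper leaves to the literature.
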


\begin{proposition}\label{prop_boundonBUstages}
     With the above setup, let $M_C$ be the number of blowup stages needed to resolve $C':=X'_{\hat{x}'}\times_{X_{\hat{x}}}C\hookrightarrow X'_{\hat{x}'}$. Then $M_C\leq r\cdot s\cdot (C\cdot D_{\hat{x}})$, where $r$ is the smallest integer such that $g^rA\subseteq \mathrm{Ann}_A(\mathfrak{C}(R/A)/R)$, and $s$ is the smallest number of generators of $\mathfrak{C}(R/A)/R$ as an $A$-module.
\end{proposition}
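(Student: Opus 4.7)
The plan is to bound $M_C$ in two steps: first reduce to bounding the $\delta$-invariant of $C'_{red}$ on the smooth surface $X'_{\hat{x}'}$, then bound that $\delta$-invariant using the codifferent $\mathfrak{C}(R/A)$.

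First I would argue $M_C \leq \delta := \ell_{A'}(\tilde{R'_{red}}/R'_{red})$, where $\tilde{R'_{red}}$ is the normalisation of $R'_{red}$. A blowup of a plane curve singularity of multiplicity $m$ decreases the local $\delta$-invariant by $\binom{m}{2} \geq 1$, so each blowup stage strictly decreases the total $\delta$ of the current strict transform by at least one, giving $M_C \leq \delta$.

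The main step is to bound $\delta$ by the codifferent. Set $R^\vee = \mathfrak{C}(R/A)$ and $M = R^\vee/R$. Since $R$ is $A$-free (Auslander--Buchsbaum applied to $R$ Cohen--Macaulay over the regular local $A$), base change gives $R^\vee \otimes_A A' \cong \mathrm{Hom}_{A'}(R',A') =: (R')^\vee$, and tensoring the sequence $0 \to R \to R^\vee \to M \to 0$ with $A' = A/hA$ yields
\[
0 \to M[h] \to R' \to (R')^\vee \to M/hM \to 0.
\]
Nilpotents of $R'$ have zero trace to $A'$ (generically $R'$ is \'etale over $A'$ and hence reduced, so nilpotents vanish after inverting the uniformiser of $A'$, and their trace in $A'$, which is determined by its value in $Q(A')$, is therefore zero), so the middle map factors through $R'_{red}$; the hypothesis that $V(h)$ and $V(g) = D_{\hat{x}}$ share no component makes $R'_{red}$ equidimensional of dimension one with no vertical components, hence $A'$-torsion-free and thus $A'$-flat, so $R'_{red} \hookrightarrow (R')^\vee$ is injective with cokernel of $A'$-length $\ell_{A'}(M/hM)$.

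The crucial geometric input is the inclusion $\tilde{R'_{red}} \subset (R')^\vee$ inside $Q(R'_{red})$: every $q \in \tilde{R'_{red}}$ is integral over the normal DVR $A'$, and for $r' \in R'$ the product $q r'$ lies in $\tilde{R'_{red}}$ and is again integral over $A'$, so its trace $\mathrm{Tr}_{Q(R'_{red})/Q(A')}(q r')$ lies in the integral closure of $A'$ in $Q(A')$, namely $A'$ itself, which is exactly the defining condition for $(R')^\vee$. This yields $\delta \leq \ell_{A'}((R')^\vee/R'_{red}) = \ell_{A'}(M/hM)$. The final length estimate is elementary: $M/hM$ is a quotient of $(A/(g^r,h))^s$, and since $(g,h)$ is a regular sequence in the two-dimensional regular local ring $A$, filtering $A/(g^r,h)$ by powers of $g$ gives $\ell_A(A/(g^r,h)) = r \cdot \ell_A(A/(g,h)) = r \cdot (C.D_{\hat{x}})$, so $\ell_{A'}(M/hM) \leq r \cdot s \cdot (C.D_{\hat{x}})$, and chaining the inequalities gives the proposition. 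The main subtlety I anticipate is the inclusion $\tilde{R'_{red}} \subset (R')^\vee$ via the trace pairing, together with the careful verification of the reducedness and flatness facts used to inject $R'_{red}$ into $(R')^\vee$.
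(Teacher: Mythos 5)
Your proof is correct and follows essentially the same route as the paper's: bound $M_C$ by $\delta=\lambda_{A'}(\overline{R}'/R')$, embed the normalisation into the codifferent $\mathfrak{C}(R'/A')$ via the trace argument, identify $\mathfrak{C}(R'/A')$ with the base change of $\mathfrak{C}(R/A)$, and bound $\lambda(M\otimes_A A')$ by $r\cdot s\cdot(C.D_{\hat{x}})$. The only (harmless) variations are that you justify $M_C\leq\delta$ by the drop of the $\delta$-invariant under blowup rather than by Northcott's first-neighbourhood-ring theorems, and you obtain the base-change compatibility of the codifferent from the $A$-freeness of $R$ and $\mathrm{Hom}_A(R,A)$ rather than from the complete-intersection/Jacobian-different description; also, $R'$ is automatically reduced (it is Cohen--Macaulay of dimension one and generically \'etale over the DVR $A'$ because $C$ shares no component with $D_{\hat{x}}$), so your $R'_{red}$ precautions, while sound, are not needed.
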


The proof is due to Bernd Ulrich\footnote{Ulrich's original argument in fact also applies to the case without the assumption of $X'_{\hat{x}'}$ being smooth. We present this special case for the sake of simplicity. In that generality, the last paragraph needs to be modified: instead of Fact (b), one applies \cite[\nopp 3.5]{kunz_regular_1988} to compute $\mathfrak{C}(R/A)$ (resp. $\mathfrak{C}(R'/A')$), and get $\Delta^{-1}((f_1,...,f_n):I)$ (resp. $\tilde{\Delta}^{-1}((\tilde{f}_1,...,\tilde{f}_n):\tilde{I})$), where $R\cong A[x_1,...,x_n]/I$ is a presentation and $(f_1,...,f_n)$ is a regular sequence in $I$ (which exists because $R$ is Cohen-Macaulay, being normal of dimension 2).}.

\begin{proof}
    First note $M_C\leq \mathrm{dim}_k(\overline{R}'/R')$, where $\overline{R}'$ is the normalisation of $R'$, i.e., the integral closure of $R'$ in $Q(R')$. This follows from a simple application of \cite[Theorem 4, Theorem 5]{northcott_notion_1957} and the well-known fact that finitely many blowups resolve $R'$. Now consider the inclusions in $Q(R')$: $R'\subseteq \overline{R}'\subseteq \mathfrak{C}(\overline{R}'/A')\subseteq \mathfrak{C}(R'/A')$. We get $M_C\leq \mathrm{dim}_k(\overline{R}'/R')=\lambda_A(\overline{R}'/R')\leq \lambda_A(\mathfrak{C}(R'/A')/R')$.\\
    
    \underline{Claim}: $\mathfrak{C}(R'/A')= \mathfrak{C}(R/A)'(:=\mathfrak{C}(R/A)\otimes_A A'$). Consequently $\mathfrak{C}(R'/A')/R'= \mathfrak{C}(R/A)'/R'= (\mathfrak{C}(R/A)/R)'$ $(:=(\mathfrak{C}(R/A)/R)\otimes_A A')$.\\

    Assume this for now, we prove the proposition. It remains to bound $\lambda_A(\mathfrak{C}(R/A)/R)')$. Denote $N=\mathfrak{C}(R/A)/R$. $N$ is a finite $A$-module, supported in $D_{\hat{x}}$ (because the $A\rightarrow R$ is \etale elsewhere). Let $r$ and $s$ be as in the statement of the proposition. Then $N$ is naturally an $A/g^rA$-module, $N'= N\otimes_{A/g^rA}A/(g^r, h)$. We have $(A/g^rA)^s\twoheadrightarrow N$, hence $(A/(g^r, h)^s\twoheadrightarrow N'$ after tensoring with $A/(g^r, h)$. So $\lambda_A(N')\leq \lambda_A((A/(g^r, h))^s)=r\cdot s\cdot (C\cdot D_{\hat{x}})$.\\

    Finally, we prove the claim. By assumption, $A$ and $R$ are regular, so $R$ is a local complete intersection over $A$. Choose any presentation $R\cong A[x_1,...,x_n]/(f_1,...,f_n)$, where $(f_1,...,f_n)$ is a regular sequence in  $A[x_1,...,x_n]$. Then, by Fact b) above, $\mathfrak{C}(R/A)=R\Delta^{-1}$, where $\Delta$ denotes the image of $\mathrm{det}(\partial f_i/\partial x_j)$ in $R$. Base change to over $A'$, we get $R'\cong A'[x_1,...,x_n]/(\tilde{f}_1,...,\tilde{f}_n)$, where $\tilde{f}_i$ denotes the image of $f_i$ in $A'[x_1,...,x_n]$. Since $\mathrm{Spec}(R') (=C')$ is of codimension 1 in $\mathrm{Spec}(R) (=X'_{\hat{x}'})$, $(\tilde{f}_1,...,\tilde{f}_n)$ is still a regular sequence in $A'[x_1,...,x_n]$. So, by Fact b) again, $\mathfrak{C}(R'/A')=R'\tilde{\Delta}^{-1}$, where $\tilde{\Delta}$ denotes the image of $\mathrm{det}(\partial \tilde{f}_i/\partial x_j)$ in $R'$. We conclude that $\mathfrak{C}(R'/A')=(\mathfrak{C}(R/A))'$.
\end{proof}
    
\subsection{Resolution of a class of singularities}\label{subsec_resolution}
The materials in this subsection will be used in the proof of Proposition \ref{tamearemuc}.

\begin{lemma}\label{lem_resolution1}
    Let $X$ be an affine and smooth variety, $\{x_1,...,x_n\}$ an \etale coordinate system on $X$ \footnote{I.e., $\{x_1,...,x_n\}\subseteq \mathcal{O}_X(X)$ and the map $X\xrightarrow{(x_1,...,x_n)}\mathbb{A}^n$ is \etale.}such that $D=\cup_{i=1}^rD_i:=\cup_{i=1}^r\{x_i=0\}$ is the decomposition of a simple normal crossing divisor (sncd) $D\hookrightarrow X$ into its irreducible components, $0<r\leq n$. Denote $\cap_{i=1}^rD_i$ by $\underline{D}$. Let $f$ be a ttfun of $T^*_{\underline{D}}X$ \footnote{We use the following notation: for $D=\cup_{i=1}^rD_i\hookrightarrow X$ a sncd, $T^*_DX:=\cup_I T^*_{D_I}X$, where $I$ ranges through nonempty subsets of $\{1,2,...,r\}$, and $D_I:=\cap_{i\in I}D_i$.\label{footnoteonnotation}} at $(x,\xi)$ where $x=\mathrm{origin}$, and $\xi=dx_1+...+dx_r$.\footnote{For $C$ a conical closed subset of $T^*X$ and $(x,\xi)$ a smooth point of $C$, we say $f$ is a ttfun  of $C$ at $(x,\xi)$ if $f$ satisfies the same conditions as in Definition \ref{ttfun}, with “$SS\mathcal{F}$” replaced by “$C$”.} Denote $f^{-1}(0)$ by $H$. Then, $D_1\cap H,...,D_{r-1}\cap H$ form a sncd on $H$ and $x_r|_H$ is a ttfun of $T^*_{D_1\cap...\cap D_{r-1}\cap H}H$ at $(x,-(dx_1+...+dx_{r-1})|_H)$. (For $r=1$, $T^*_{D_1\cap...\cap D_{r-1}\cap H}H:=T^*_HH$.)
\end{lemma}

\begin{proof}
    It follows easily from $f$ being a ttfun that:\\
    (i) $D_1,...,D_{r-1}$ indeed form a sncd on $H$;\\
    (ii) $\Gamma_{dx_r|_H}$ intersects $T^*_{D_1\cap...\cap D_{r-1}\cap H}H$ precisely at $(x,-(dx_1+...+dx_{r-1})|_H)$.\\
    
    We want to show the intersection is transverse. By the “$\Rightarrow$” part of the proof of Proposition \ref{misclemmacloimm}, it suffices to show this in the ambient space $X$, i.e., that \\$\Gamma_{dx_r}\cdot \langle dx_1,...,dx_{r-1}, df\rangle=1\cdot (x, dx_r)$, where $\langle dx_1,...,dx_{r-1}, df\rangle$ denotes the pushforward of $T^*_{D_1\cap...\cap D_{r-1}\cap H}H$ into $X$. The $r=n$ case is easy. We assume $r\leq n-1$ in the following. Note $\Gamma_{dx_r}$ and $\langle dx_1,...,dx_{r-1}, df\rangle$ intersect precisely at $(x, dx_r)$, so it suffices to show that their tangents at $(x, dx_r)$ are linearly independent. The computation is straightforward, here are the results:\\\\
    Use coordinates $\{x_1,...,x_n;p_1,...,p_n\}$ on $T^*X$.\\
    $\Gamma_{dx_r}$: tangent space at $(x, dx_r)$ is spanned by $\{\partial_{x_1},...,\partial_{x_n}\}$.\\
    $\langle dx_1,...,dx_{r-1}, df\rangle$: tangent space  at $(x, dx_r)$ is spanned by $\{\partial_{p_1},...,\partial_{p_{r-1}}, \sum _{i=1}^n \partial_{p_i},$\\$ (\partial_{x_{r+1}}+\sum_{i=1}^n f_{i,r+1}\partial_{p_i}),...,(\partial_{x_{n}}+\sum_{i=1}^n f_{i,n}\partial_{p_i})\}$, where $f_{i,j}$ denotes the derivative of $f$ in $x_i$ followed by in $x_j$.\\
    
    These are linearly independent if and only if the matrix $\{f_{i,j}\}_{i,j\in \{r+1,...,n\}}$ is nondegenerate. But this follows exactly from the assumption that $f$ is a ttfun for $T^*_{\underline{D}}X$.
\end{proof}

\begin{lemma}\label{lem_resolution2}
    Same setup as in the previous lemma. Then the embedded singularity $D\cap H \hookrightarrow H$ can be resolved in two steps: first blow up at $x$, then blow up along the intersection of the exceptional divisor with the strict transform of $D_1\cap ... \cap D_{r-1}\cap H$. (There is only one blowup for $r=1$.)
\end{lemma}

\begin{proof}
    As everything happens on $H$, for convenience, we make the following notation changes in this proof (new $\dashleftarrow$ old): $X\dashleftarrow H$, $D_i\dashleftarrow D_i\cap H$, $x_i\dashleftarrow x_i|_H$,  (both for $i=1,2,...,r-1$), $D\dashleftarrow \cup_{i=1}^{r-1}D_i\cap H$, $\underline{D}\dashleftarrow \cap_{i=1}^{r-1}D_i\cap H$, $H\dashleftarrow D_r\cap H$, $f\dashleftarrow x_r|_H$. We also rename $n$ and $r$ such that our new $X$ is of dimension $n$, and new $D$ has $r$ components. In this new notation, the statement becomes: the embedded singularity $D\cup H \hookrightarrow X$ can be resolved by first blowing up at $x$, then blowing up along the intersection of the exceptional divisor with the strict transform of $\underline{D}$.\\
    
    The $r=n$ case is simple. We check the $r\leq n-1$ cases. The condition on $f$ implies that, when pulled back to the completion $X_{\hat{x}}=\mathrm{Spec}(k[[x_1,...,x_n]])$ of $X$ at $x$, $f$ is of the form $x_1+...+x_r+Q+Q'+P+(...)$ where $Q$ is a nondegenerate quadratic form in $\{x_{r+1},...,x_{n}\}$, $Q'$ is a quadratic form in $\{x_1,...,x_r\}$, $P$ is a linear combination of monomials of the form $x_ax_{\alpha}$ for $a\in \{1,...,r\}$, $\alpha\in \{r+1,...,n\}$, and $(...)$ means higher degree terms. In the rest of the proof, $a,b...$ always mean an index in $\{1,...,r\}$;  $\alpha,\beta...$ always mean an index in $\{r+1,...,n\}$; $i,j...$ always mean an index in $\{1,...,n\}$; $\sum$ means over all allowed indices unless specified. By a linear change of coordinates in $\{x_{\alpha}\}$ and possibly replacing $X$ by a smaller affine open neighbourhood of $x$, we may assume $Q=\sum x_{\alpha}^2$.\\
    
    Blow up at $x$. Use coordinates $((x_1,...,x_n),[p_1:...:p_n])$ on $X\times \mathbb{P}^{n-1}$. Then  $\mathrm{Bl}_x(X)$ is the subvariety cut out by $\{x_ip_j=x_jp_i\}_{\mathrm{all}\, i,j}$. We look at the $p_n=1$ piece, the others can be checked by the same method. On this piece, we may use coordinates $\{x_n,p_1,...,p_{n-1}\}$, as $x_i=x_np_i$ for $i=1,...,n-1$. The exceptional divisor is $E=\{x_n=0\}$. We list the strict transforms of relevant objects:\\
    $\bullet$ $D_a\leadsto D_a'=\{p_a=0\}$;\\  
    $\bullet$ $\underline{D}\leadsto\underline{D}'=\{p_1=...=p_r=0\}$;\\
    $\bullet$ $H\leadsto H'=\{f^{(1)}=0\}$, where $f^{(1)}$ is a regular function on $\mathrm{Bl}_x(X)_{p_n\neq 0}$ whose pullback to $\mathrm{Bl}_{\hat{x}}(X_{\hat{x}})_{p_n\neq 0}$ is $\sum p_a+(x_n+\sum_{\alpha=r+1}^{n-1} x_np_{\alpha}^2)+Q'/x_n+P/x_n+(...)$, with $Q'/x_n$ consisting of (linear combinations of) terms of the form $x_np_ap_b$, and $P/x_n$ of terms of the forms $x_np_ap_{\alpha}$ and $x_np_a$. \\
    
    The conormals of $D_a', \underline{D}'$ are spanned by $dp_a, \{dp_1,...,dp_r\}$ respectively. We compute: $df^{(1)}|_E=\sum dp_a+ (1+\sum_{\alpha=r+1}^{n-1} p_{\alpha}^2)dx_n+A$, where $A$ consists terms of the form $p_ap_bdx_n, p_ap_{\alpha}dx_n, p_adx_n$. It is an exercise to deduce from these that:\\
    \noindent
    (i) $\{D_1',...,D_r', H', E\}$ form a sncd except along $\underline{D}'\cap H'\cap E=\underline{D}'\cap E$;\\
    (ii) along $\underline{D}'\cap E$: outside the conic $C:=\{x_n=p_1=...=p_r=0, 1+\sum_{\alpha=r+1}^{n-1} p_{\alpha}^2=0\}$, every $r-1$ members of $\{D_1',...,D_r', H', E\}$ form a sncd; on $C$, $\{D_1',...,D_r', E\}$ form a sncd, and $df^{(1)}=\sum dp_a$.\\
    
    It remains to resolve the singularity along $\underline{D}'\cap E$. Blow up along $\underline{D}'\cap E$. It is another exercise to see, using (ii), that the singularity outside $C$ is resolved. We check that the singularity is also resolved over $C$.\\
    
    Use coordinates $((x_n,p_1,...,p_{n-1}),[q_n:q_1:...:q_r])$ on $(\mathrm{Bl}_{x}(X_{x})_{p_n\neq 0})\times\mathbb{P}^{r}$. Then $\mathrm{Bl}_{\underline{D}'\cap E}(\mathrm{Bl}_{x}(X_{x})_{p_n\neq 0})$ is the subvariety cut out by $\{x_nq_i=q_np_i, p_iq_j=q_ip_j\}_{\mathrm{all}\, i,j}$. We look at the $q_n=1$ piece, the others can be checked by the same method. On this piece, we may use coordinates $\{x_n,q_1,...,q_r,p_{r+1},...,p_{n-1}\}$, as $p_a=x_nq_a$ for $a=1,...,r$. The exceptional divisor is $F=\{x_n=0\}$. We list the strict transforms of relevant objects:\\
    $\bullet$ the strict transform of $E$ lies at infinity and is irrelevant on this piece;\\
    $\bullet$ $D_a'\leadsto D_a''=\{q_a=0\}$;\\
    $\bullet$ $\underline{D}'\leadsto \underline{D}''=\{q_1=...=q_r=0\}$;\\
    $\bullet$ $H'\leadsto H''=\{f^{(2)}=0\}$, where $f^{(2)}$ is a regular function on $(\mathrm{Bl}_{\underline{D}'\cap E}(\mathrm{Bl}_{x}(X_{x})_{p_n\neq 0}))_{q_n\neq 0}$ whose pullback to $(\mathrm{Bl}_{\underline{D}'\cap E}(\mathrm{Bl}_{\hat{x}}(X_{\hat{x}})_{p_n\neq 0}))_{q_n\neq 0}$ is $\sum q_a+(1+\sum_{\alpha=r+1}^{n-1} p_{\alpha}^2)+Q'/x^2_n+P/x^2_n+(...)$, with $Q'/x^2_n$ consisting of terms of the form $x_n^2q_aq_b$, and $P/x^2_n$ of terms of the forms $x_nq_ap_{\alpha}$ and $x_nq_a$.\\
    
    We compute: $df^{(2)}|_{x_n=0}=\sum dq_a+(\sum_{\alpha=r+1}^{n-1} 2p_{\alpha}dp_{\alpha})+A$, where $A$ consists of terms of the form $q_ap_{\alpha}dx_n, q_adx_n$. Recall that we want to show $\{D_1'',...,D_r'', H'', F, E'\}$ form a sncd. $E'$ is irrelevant here, and it suffices to check over $C$, i.e., on the locus $\{x_n=0, 1+\sum_{\alpha=r+1}^{n-1} p_{\alpha}^2=0\}$. 
    But along this locus, $\{p_{r+1},...,p_{n-1}\}$ are not all 0, so $df^{(2)}|_{x_n=0}$ contains some $dp_{\alpha}$ component and is thus not contained in the span of $\{dx_n,dq_1,...,dq_r\}$, consequently $\{D_1'',...,D_r'', H'', F, E'\}$ form a sncd. 
\end{proof}

\subsection{A retraction lemma}
We learned the following lemma from Owen Barrett. This will be used in the proof of Proposition \ref{misclemmacloimm}.

\begin{lemma}\label{owenretraction}
    Let $i: Z\hookrightarrow X$ be a closed immersion of smooth schemes over a field $k$, and $z\in Z$ a point. Then in some Zariski open neighbourhood $X'$ of $z$ in $X$, there exists an \etale neighbourhood $\Tilde{X'}\xrightarrow{\beta}X'$ of $z$ in $X'$ and maps $\alpha, r$ making the following diagram commute:
\[\begin{tikzcd}
	{Z'} & {\tilde{X'}} & {X'}
	\arrow["\alpha"', hook, from=1-1, to=1-2]
	\arrow["\beta"', from=1-2, to=1-3]
	\arrow["r"', curve={height=12pt}, from=1-2, to=1-1]
\end{tikzcd}\]
where $Z'=Z\times_X X'$, $\alpha$ is a closed immersion, $\beta$ is \etale, $r$ is a retraction, and $\beta\alpha=i$. Moreover, the retraction $r$ is smooth along $Z'$.
\end{lemma}

\begin{proof}
    As $Z$ is smooth, there exists an \etale map $Z'\rightarrow \mathbb{A}_k^m$ for some Zariski open neighbourhood $Z'$ of $z$ in $Z$. Extend $Z'$ to a Zariski open $X'$ in $X$. By further shrinking, we may assume $Z', X'$ are affine. Consider the following pushout, and choose a retraction $r'$:
\[\begin{tikzcd}
	{Z'} & {X'} \\
	{\mathbb{A}^m_k} & {X''}
	\arrow[hook, from=1-1, to=1-2]
	\arrow[from=1-1, to=2-1]
	\arrow["f", from=1-2, to=2-2]
	\arrow[hook, from=2-1, to=2-2]
	\arrow["\lrcorner"{anchor=center, pos=0.125, rotate=180}, draw=none, from=2-2, to=1-1]
	\arrow["{r'}"{description}, curve={height=-12pt}, from=2-2, to=2-1]
\end{tikzcd}\]
(If $X''=\mathrm{Spec}A, \mathbb{A}^m_k=\mathrm{Spec}(k[x_1,...,x_m])$, choosing an $r'$ amounts to choosing a lift for each $x_i$ of $A\twoheadrightarrow k[x_1,...,x_m]$.) We construct $\Tilde{X'}$ etc. using the following two pullback diagrams:
\[\begin{tikzcd}
	{Z'} & {Z'\times_{\mathbb{A}^m_k}X'} && {Z'\times_{\mathbb{A}^m_k}Z'} & {Z'\times_{\mathbb{A}^m_k}X'} \\
	{\mathbb{A}^m_k} & {X'} && {Z'} & {X'}
	\arrow["{pr_2}", from=1-5, to=2-5]
	\arrow["i"', hook, from=2-4, to=2-5]
	\arrow["{pr_2}", from=1-4, to=2-4]
	\arrow["{i'}", hook, from=1-4, to=1-5]
	\arrow["\lrcorner"{anchor=center, pos=0.125}, draw=none, from=1-4, to=2-5]
	\arrow["\Delta", curve={height=-12pt}, from=2-4, to=1-4]
	\arrow[from=1-1, to=2-1]
	\arrow["{r'f}", from=2-2, to=2-1]
	\arrow["{pr_2}", from=1-2, to=2-2]
	\arrow["{pr_1}"', from=1-2, to=1-1]
	\arrow["\lrcorner"{anchor=center, pos=0.125, rotate=-90}, draw=none, from=1-2, to=2-1]
\end{tikzcd}\]

In the right diagram, note $Z'\times_{\mathbb{A}^m_k}Z'$ is a disjoint union of several copies of $Z'$ because $pr_2$ is \etale. $\Delta$ is an isomorphism to the diagonal copy. Let $\Tilde{X'}=Z'\times_{\mathbb{A}^m_k}X'-(Z'\times_{\mathbb{A}^m_k}Z'-\Delta(Z')), \alpha=i'\Delta$, $\beta=pr_2$, $r=pr_1$. It is an exercise to see that these satisfy the requirement. Note the smoothness of $r$ along $Z'$ follows from the smoothness of $Z', \tilde{X}'$ and the injectivity of $dr$ on cotangent spaces (see, e.g., \cite[\nopp 6.2.10]{liu_algebraic_2002}).
\end{proof}

\section{The stability of vanishing cycles}\label{stab of phi}
This section is devoted to discussing the stability of vanishing cycles in the positive characteristic algebraic context. In §\ref{subsection-stabofdimtot} we discuss the independence of dimtot($\phi$) with respect to the ttfun. In §\ref{subsection on high-jet} we discuss the variation of $\phi$ with respect to high jets of the ttfun. We fix the following setup: $X$ is a smooth variety over a field $k$ algebraically closed of characteristic $p>2$, $\mathcal{F}\in D(X)$, and $(x,\xi)$ is a \emph{smooth} point of $SS\mathcal{F}$. Note that in this setup ttfun's at $(x,\xi)$ always exist (\cite[\nopp 4.12]{beilinson_constructible_2016}).

\subsection{The stability of dimtot($\phi$)}\label{subsection-stabofdimtot}

\begin{proposition}[\cite{saito_characteristic_2017}]\label{dimtotstab}
    With the above setup, assume further $\mathcal{F}\in D_{ctf}(X)$, then for a ttfun $f$, $\mathrm{dimtot(}\phi_f(\mathcal{F})_x\mathrm{)}$ is independent of $f$.
\end{proposition}

To see this is true, just apply the Milnor formula (\cite[\nopp 5.9]{saito_characteristic_2017}): for a ttfun $f$, $\mathrm{dimtot(}\phi_f(\mathcal{F})_x\mathrm{)}$ is equal to minus the coefficient of $CC\mathcal{F}$ at $x$. But logically, this proposition comes before the Milnor formula. Indeed, the very fact that the dimtot of vanishing cycles “lives” on the cotangent bundle allows one to define the characteristic cycle. See Remark \ref{dimtotstabproof} for a logically direct proof of this proposition.
\begin{proposition}[\cite{saito_characteristic_2017}] \label{phistabradon}
    The integer $\mathrm{dimtot(}\phi\mathrm{)}$ is invariant under the Radon transform. More precisely, in the algebraic Radon setup \ref{radonsetup}, assume further $\mathcal{F}\in D_{ctf}(\mathbb{P})$, let $(x,\xi)$ be a smooth point of $SS\mathcal{F}$ with $\xi\neq 0$. Denote by $\nu$ the image of $(x,\xi)$ in $PSS\mathcal{F}$. Let $(a, \alpha)$ be any representative of $\nu$ in $T^*\mathbb{P}^{\vee}$. Let $f$ (resp. $g$) be any ttfun for $\mathcal{F}$ (resp. $R\mathcal{F}$) at $(x,\xi)$ (resp. $(a,\alpha)$). Then $\mathrm{dimtot(}\phi_f(\mathcal{F})_x\mathrm{)} = \mathrm{dimtot(}\phi_g(R\mathcal{F})_a\mathrm{)}$.
\end{proposition}
 
This is a consequence of \cite[\nopp 6.4, 6.5]{saito_characteristic_2017}. For later purposes, we record a direct proof in our setting.
\begin{proof}
    By the compatibility of vanishing cycles with proper pushforwards, $\phi_g(\q_!\p^*\mathcal{F})_a\cong \q_*\phi_{g\q}(\p^*\mathcal{F})$. By the following Lemma \ref{radonintersect}, $g\q$ is a ttfun for $\p^*\mathcal{F}$ at $(z, \zeta)$, where $z=(x,a), \zeta=d\q(\alpha)$. Moreover, $z$ is the only point in $\q^{-1}(a)$ where $d(g\q)$ and $SS(\p^*\mathcal{F})$ intersects. So $\q_*\phi_{g\q}(\p^*\mathcal{F})\cong \phi_{g\q}(\p^*\mathcal{F})_z$ as objects in $D^b_c(\mathbb{Z}/\ell^n[G_\eta])$ (see Conventions for notations). By Proposition \ref{dimtotstab}, dimtot($\phi_{g\q}(\p^*\mathcal{F})_z$) can be computed by any ttfun for $\p^*\mathcal{F}$ at $(z, \zeta)$. Locally near $x\in\mathbb{P}$, fix an isomorphism of $Q\xrightarrow{\p}\mathbb{P}$ with the projection  $\mathbb{P}\times\mathbb{A}^{n-1}\rightarrow\mathbb{P}$ such that $z$ is identified with $(x,0)$. Let $\{y_1,...,y_{n-1}\}$ be the standard coordinate on $\mathbb{A}^{n-1}$. Then, for $h=y_1^2+...+y_{n-1}^2$, $f+h$ is a ttfun at $(z,\zeta)$. Apply Thom-Sebastiani \cite[\nopp 4.5]{illusie_around_2017}, we get $\phi_f(\mathcal{F})_x\ast\phi_h(\underline{\mathbb{Z}/\ell^n})_0\simeq\phi_{f+h}(\p^*\mathcal{F})_z$. To show $\mathrm{dimtot(}\phi_f(\mathcal{F})_x\mathrm{)} = \mathrm{dimtot(}\phi_g(R\mathcal{F})_a\mathrm{)}$, it suffices to show $\mathrm{dimtot(}\phi_f(\mathcal{F})_x\mathrm{)} = (-1)^{n-1}\mathrm{dimtot(}\phi_{f+h}(\p^*\mathcal{F})_z\mathrm{)}$, where the $n-1$ comes from the definition of the Radon transform. By the perverse t-exactness of shifted vanishing cycles, we may assume $\mathcal{F}$ is perverse. Then $\phi_f(\mathcal{F})_x$ is in degree $0$, and $\phi_{f+h}(\p^*\mathcal{F})_z$ is in degree $n-2$. It is well-known that $\phi_h(\underline{\mathbb{Z}/\ell^n})_0$ is in degree $n-2$ and of rank $1$ (\cite[\nopp XV 2.2.5]{SGA7}). Apply  \cite[\nopp 5.12]{illusie_around_2017}, we get $\mathrm{dimtot(}\phi_{f+h}(\p^*\mathcal{F})_z\mathrm{)}=(-1)^{n-1}\mathrm{dimtot(}\phi_{f}(\mathcal{F})_x\mathrm{)}$.
\end{proof}

\begin{lemma}\label{radonintersect}
In the algebraic Radon setup \ref{radonsetup}, let $\mathcal{F}\in D(\mathbb{P})$, $(a,\alpha)$ a smooth point in $SSR\mathcal{F}$, $\alpha\neq 0$, and $g$ a ttfun for $R\mathcal{F}$ at $(a,\alpha)$. Then:\\
(i) on $\q^{-1}(a)$, $\Gamma_{d(g\q)}$ intersects $SS(\p^*\mathcal{F})$ only at $(z, \zeta)\in T^*Q$, where $z$ is the point in $Q\cong PT^*\mathbb{P}^{\vee}$ corresponding to $(a, \alpha)$, and $\zeta=d\q(\alpha)$;\\
(ii) the intersection of $\Gamma_{d(g\q)}$ and $SS(\p^*\mathcal{F})$ at $(z, \zeta)$ is transverse.\\
In particular, $g\q$ is a ttfun for $\p^*\mathcal{F}$ at $(z, \zeta)$.
\end{lemma}
\begin{proof}
    (i) If $(z', \zeta')$ is in the intersection and $z'\in \q^{-1}(a)$, because $SS^+(R\mathcal{F})=\q_{\circ}SS^+(\p^*\mathcal{F})=\q_{\circ}\p^{\circ}SS^+\mathcal{F}$, there must exist an $(x', \xi')\in T^*\mathbb{P}$ such that (a) it lies in $SS^+\mathcal{F}$; (b) it corresponds to $(a, \alpha)$. But (a) forces $x'=\p(z')$, $\xi'=$ the unique covector at $x'$ which pulls back under $d\p_{x'}$ to $\zeta'$; (b) forces $(z', \zeta')=(z, \zeta)$.\\
    
    Note, actually more is true: if we restrict to a small Zariski neighbourhood $V$ of $a$ on which $\Gamma_{dg}$ intersects $SS^+(R\mathcal{F})$ only at $(a, \alpha)$, then $(z, \zeta)$ is the only point of intersection of $\Gamma_{d(g\q)}$ and $SS(\p^*\mathcal{F})$ on $\q^{-1}(V)$.\\
    
    (ii) Let $V$ be a neighbourhood as above. Consider the correspondence: 
\[\begin{tikzcd}
	& {Q\times_VT^*V} \\
	{T^*Q} && {T^*V}
	\arrow["u"', from=1-2, to=2-1]
	\arrow["v", from=1-2, to=2-3]
\end{tikzcd}\]

Abbreviate $SS(\p^*\mathcal{F})$ as $C$. Abuse notation, denote the restriction of $\Gamma_{dg}, \Gamma_{d(g\q)}$ to over $V$ by $\Gamma_{dg}, \Gamma_{d(g\q)}$ again. Let $u_*, v^*$ denote the intersection theoretic pushforward and pullback. We claim $\Gamma_{d(g\q)}=uv^{-1}\Gamma_{dg}=u_*v^*\Gamma_{dg}$ (i.e. no $> 1$ multiplicities are introduced in the intersection theoretic pull and push). Assume this for now. Note $\Gamma_{d(g\q)}$ intersects $C$ at the single point $(z, \zeta)$. We want to compute the intersection number. Since $u$ is a closed immersion and $v$ is proper smooth, $u^*C$ and $v^*\Gamma_{dg}$ also intersect at a single point and, by the projection formula from intersection theory, $C\cdot (u_*v^*\Gamma_{dg})=(u^*C)\cdot (v^*\Gamma_{dg})=(v_*u^*C)\cdot \Gamma_{dg}$. We claim that $v_*u^*C=vu^{-1}C$. Assuming this, then $vu^{-1}C^+=\q_{\circ}C^+=SS^+(R\mathcal{F})$, so $C\cdot (u_*v^*\Gamma_{dg})=SSR\mathcal{F}\cdot \Gamma_{d(g\q)}=1$.\\

It remains to show the two claims. The first claim follows from $v$ being smooth and $u$ being a closed immersion. For the second claim, we show separately below $u^*$ and $v_*$ introduce no $> 1$ multiplicities:\\

For $u^*$: This is intersecting $C$ with $Q\times_VT^*V$. Since $(a,\alpha)$ is a smooth point, $C$ is also smooth near $(z, \zeta)$. By counting dimensions, it suffices to find $n-1$ tangent vectors of $C$ which are not tangent to $Q\times_VT^*V$. One verifies that the tangents of $C$ in the $\p$ fibre direction work.\\

For $v_*$: After removing the zero section, $u^*C$ lies in the “diagonal” of $Q\times_VT^*V$, so is mapped isomorphically to its image. More precisely, consider $Q\times_{\mathbb{P}^{\vee}}T^*\mathbb{P}^{\vee}\rightarrow T^*\mathbb{P}^{\vee}$ ($Q\times_VT^*V\rightarrow T^*V$ is then its base change to $V$). Note $Q\times_{\mathbb{P}^{\vee}}(T^*\mathbb{P}^{\vee}-T_{\mathbb{P}^{\vee}}^*\mathbb{P}^{\vee})\rightarrow (T^*\mathbb{P}^{\vee}-T_{\mathbb{P}^{\vee}}^*\mathbb{P}^{\vee})$ admits a natural $\mathbb{G}_m$ action. Take the quotient (which does not change multiplicity computations), we get $Q\times_{\mathbb{P}^{\vee}}Q\rightarrow Q$ (identifying $(T^*\mathbb{P}^{\vee}-T_{\mathbb{P}^{\vee}}^*\mathbb{P}^{\vee})/\mathbb{G}_m$ with $Q$), where the map is just the projection to the second factor. Then, by $C=\p^{\circ}SS\mathcal{F}$ and the description of $T_{z'}^*Q$ in the Radon setup \ref{radonsetup}, one checks that $(u^*C-\mathrm{zero\,section})/\mathbb{G}_m$ lies in the diagonal of $Q\times_{\mathbb{P}^{\vee}}Q$, so is mapped isomorphically to its image.
\end{proof}

\subsection{The high-jet stability of $\phi$}\label{subsection on high-jet}
Return to Examples \ref{ex1}, \ref{ex2}: we noticed that when $N$ is large enough, the Swan conductors are independent of $s$ (for $s$ in a small neighbourhood of $0\in\mathbb{A}^1$), consequently $\mathrm{dim}(\phi)$'s are independent of $s$. This suggests that the dependence of vanishing cycles on the ttfun  is only up to a high enough jet. In this section, we formulate precisely the notion of vanishing cycles being stable with respect to the variation of the ttfun in order $\geq N$-terms and prove such a result in a special case. At the end, we discuss the relation between our result and a result of Saito (whose formulation and proof inspired ours).

\subsubsection{Transverse test families}\label{testfamsection}

The definition of the ttfam in the algebraic context is as follows:

\begin{definition}[transverse test family (algebraic)]\label{ttfamdef}
    A \underline{transverse test family (ttfam)} of $\mathcal{F}$ at a \emph{smooth} point $(x, \xi)$ of $SS\mathcal{F}$, denoted by $(T,U,V,f)$, is the following data (here $\mathbb{A}^1=\mathbb{A}^1_k$):\\
\[\begin{tikzcd}
	{} & {U\times T} & V & {x_T:=x\times T} & {} \\
	&& {\mathbb{A}^1_T:=\mathbb{A}^1\times T} \\
	&& {}
	\arrow[hook', from=1-3, to=1-2]
	\arrow[hook', from=1-4, to=1-3]
	\arrow["f", from=1-3, to=2-3]
\end{tikzcd}\]
where:\\
(i) T is a connected smooth finite type scheme over k. We will often identify T with $0\times T\subseteq\mathbb{A}^1\times T$.\\
(ii) U is an \emph{\etale} neighbourhood of x ($x$ is implicitly viewed as a point of $U$), V is a Zariski open of $U\times T$ containing $x_T$.\\
(iii) f is a morphism such that, for all closed points s of T, the \underline{s-slice} $f_s: V_s (:= V\times_{\mathbb{A}^1_T}\mathbb{A}^1_s) \rightarrow \mathbb{A}^1_s$ is a ttfun with respect to $\mathcal{F}$ at $(x,\xi)$ (in particular, $f_s$ is $SS\mathcal{F}$-transversal except at x).
\end{definition}

\begin{definition}[the vanishing cycle associated to a ttfam]\label{def_vanishingcycleofttfam}
    Let $\mathcal{F}\in D(X)$ and $(T,U,V,f)$ be a ttfam for $SS\mathcal{F}$ at $(x,\xi)$. \underline{The vanishing cycle associated to this ttfam} is the following sheaf on the oriented product topos $x_T\overleftarrow{\times}_{\mathbb{A}^1_T}(\mathbb{A}^1_T-T)$:
\[\phi_f(\mathcal{F}):= \Phi_f(\mathcal{F}_V)|_{x_T\overleftarrow{\times}_{\mathbb{A}^1_T}(\mathbb{A}^1_T-T)}\]
Here $\mathcal{F}_V$ is the pull back of $\mathcal{F}$ to $V$, and $\Phi$ is the vanishing cycle over general bases for $f: V\rightarrow\mathbb{A}^1_T$.\footnote{We refer to \cite{orgogozo_modifications_2006, illusie_around_2017} for details on oriented products and vanishing cycle over general bases.}\label{vanishinggeneralbase}
\end{definition}

\begin{remark}\label{rmkttfam}
    (i) In a ttfam, the condition on $f$ implies that $f$ is $SS\mathcal{F}$-transversal outside $x_T$ (essentially because for $s\hookrightarrow T$ the conormal bundle of $V_s\hookrightarrow V$ is isomorphic to the pullback of conormal bundle of $\mathbb{A}^1_s\hookrightarrow \mathbb{A}^1_T$. See, for example, \cite[the proof of 3.9.1]{saito_characteristic_2017}). This implies it is ULA with respect to $\mathcal{F}$ outside $x_T$.\\\\
    (ii) Directly from the definition of being  $SS\mathcal{F}$-transversal, $f$ is smooth in a neighbourhood of the base of $SS\mathcal{F}_V$ except possibly at points in $x_T$. If $\xi\neq 0$, then $f$ is also smooth on $x_T$; if $\xi=0$, then $f$ is not smooth on $x_T$, nevertheless it is still flat on $x_T$ because 
    $V\rightarrow \mathbb{A}^1_T$ is always a family of hypersurfaces in a neighbourhood of $x_T$.\\\\
    (iii) Apply \cite[\nopp 6.1]{orgogozo_modifications_2006} and \cite[comments after 1.6.1]{illusie_around_2017} we see: $\Phi_f(\mathcal{F}_V)$ is constructible and commutes with every base change. In particular, it is supported on $x_T\overleftarrow{\times}_{\mathbb{A}^1_T} \mathbb{A}^1_T$ and its restriction to each slice equals the usual vanishing cycle, i.e. for any closed point s of T, $\Phi_f(\mathcal{F}_V)|_{V_s\overleftarrow{\times}_{\mathbb{A}^1_T} \mathbb{A}^1_s}$ is supported on $x\overleftarrow{\times}_{\mathbb{A}^1_s} (\mathbb{A}^1_s-{0})\cong \mathbb{A}^1_{s,(0)}-\{0\}$ and canonically isomorphic to $\phi_{f_s}(\mathcal{F})_x$.\\\\
    (iv) Apply \cite[\nopp 2.8]{saito_characteristic_2017} to geometric points $x$ of $x_T$, $t$ of $T$ (identified with $0\times T\subseteq\mathbb{A}^1_T$) and $u$ of $\mathbb{A}^1_T -T$, we get a distinguished triangle:
    \[\Psi_f(\mathcal{F}_V)_{x\leftarrow t}\rightarrow \Psi_f(\mathcal{F}_V)_{x\leftarrow u}\rightarrow \Phi_f(\mathcal{F}_V)_{t\leftarrow u}\rightarrow\]
    where the first map is the cospecialisation and the second is the composition of\\ $\Psi_f(\mathcal{F}_V)_{x\leftarrow u}\rightarrow\Phi_f(\mathcal{F}_V)_{x\leftarrow u}$ and the cospecialisation $\Phi_f(\mathcal{F}_V)_{x\leftarrow u}\rightarrow\Phi_f(\mathcal{F}_V)_{t\leftarrow u}$.
    Compose this with 
\[\begin{tikzcd}
	{\mathcal{F}_x} & {\mathcal{F}_x} & 0 & {}
	\arrow[shorten <=3pt, shorten >=3pt, Rightarrow, no head, from=1-1, to=1-2]
	\arrow[shorten <=3pt, shorten >=3pt, from=1-2, to=1-3]
	\arrow[from=1-3, to=1-4]
\end{tikzcd}\]
    and take the cone (\cite[\nopp 1.1.11]{BBDG}) we get
\[\begin{tikzcd}
	{\mathcal{F}_x} & {\mathcal{F}_x} & 0 & {} \\
	{\Psi_f(\mathcal{F}_V)_{x\leftarrow t}} & {\Psi_f(\mathcal{F}_V)_{x\leftarrow u}} & {\Phi_f(\mathcal{F}_V)_{t\leftarrow u}} & {} \\
	{\Phi_f(\mathcal{F}_V)_{x\leftarrow t}} & {\Phi_f(\mathcal{F}_V)_{x\leftarrow u}} & {\Phi_f(\mathcal{F}_V)_{t\leftarrow u}} & {} \\
	{} & {} & {}
	\arrow[from=2-1, to=2-2]
	\arrow[from=2-2, to=2-3]
	\arrow[from=2-3, to=2-4]
	\arrow[shorten <=8pt, shorten >=8pt, Rightarrow, no head, from=1-1, to=1-2]
	\arrow[from=1-1, to=2-1]
	\arrow[from=1-2, to=2-2]
	\arrow[shorten <=8pt, shorten >=8pt, from=1-2, to=1-3]
	\arrow[from=1-3, to=2-3]
	\arrow[from=2-1, to=3-1]
	\arrow[from=2-2, to=3-2]
	\arrow[from=3-1, to=4-1]
	\arrow[from=3-2, to=4-2]
	\arrow[Rightarrow, no head, from=2-3, to=3-3]
	\arrow[from=3-3, to=4-3]
	\arrow[from=3-3, to=3-4]
	\arrow[from=3-1, to=3-2]
	\arrow[from=3-2, to=3-3]
	\arrow[from=1-3, to=1-4]
\end{tikzcd}\]
where the two maps in the third row are cospecialisations.\\\\
(v) Later we will often consider the condition of $\Phi_f(\mathcal{F}_V)|_{x_T\overleftarrow{\times}_{\mathbb{A}^1_T}(\mathbb{A}^1_T-T)}$ being a local system. Combine (iii) and (iv), we see: $\Phi_f(\mathcal{F}_V)|_{x_T\overleftarrow{\times}_{\mathbb{A}^1_T}(\mathbb{A}^1_T-T)}$ is a local system if and only if, in the following diagram, $(f_T, \mathcal{F}_T)$ ($\mathcal{F}_T$ is the base change of $\mathcal{F}$ to $V_T$) is ULA, which is equivalent to $\Phi_{f_T}(\mathcal{F}_T)=0$ since $\Phi$ commutes with base change by (iii) (c.f. \cite[Example 1.7 (b)]{illusie_around_2017}):
\begin{equation}\label{ttfamred}
\begin{tikzcd}
	{V_T:=V\times_{\mathbb{A}^1_T}T} \\
	T
	\arrow["{f_T}", from=1-1, to=2-1]
\end{tikzcd}
\end{equation}
 If this is satisfied, $\Phi_f(\mathcal{F}_V)$ is automatically supported on $x_T\overleftarrow{\times}_{\mathbb{A}^1_T}(\mathbb{A}^1_T-T)$.
\end{remark}

\subsubsection{Generic finite depth}\label{sectiongenfindepth}
Recall the following definition in the introduction, which makes precise the notion of the vanishing cycle being dependent on the ttfun up to the $N$-th jet.

\begin{definition}[depth of $\mathcal{F}$]
     Let $(x,\xi)$ be a smooth point of $SS\mathcal{F}$. The \underline{depth of $\mathcal{F}$ at} \underline{$(x,\xi)$} is the smallest $N\geq 2\in \mathbb{N}$ such that $\phi_f(\mathcal{F})$ is a local system for every ttfam $(T,U,V,f)$ at $(x,\xi)$ satisfying the following condition: $f_s\equiv f_{s'}$ mod $\mathfrak{m}_x^N$, for all closed points $s, s'$ of $T$. If such an $N$ does not exist, we say the depth is $\infty$.
\end{definition}

\begin{remark}\label{depthdefremark}
    (i) In the definition, one cannot allow for all functions having an isolated intersection with the $SS$ at $(x,\xi)$. Because the latter can have arbitrary intersection multiplicities\footnote{E.g. $f_s: \mathbb{A}^1\rightarrow \mathbb{A}^1$, $x\mapsto (1-s)x^n+sx^m$.}, and the depth thus defined would be infinity in general. \\
    (ii) The depth is \etale local.\\
    (iii) In the next section, we will study functorialities of the depth as well as sheaves with depth 2.
\end{remark}

We record a basic question which we do not know how to answer yet:
\begin{question}\label{questiondepthsmpull}
    How does the depth change under smooth pullbacks?
\end{question}

Here is the first version of our main result.

\begin{theorem}\label{thmstab}
    Let $X$ be a smooth surface over an algebraically closed field $k$ of characteristic $p>2$, $x\in D$ a smooth point of an irreducible divisor, $U=X-D$, $\mathcal{F}=j_{!} \mathcal{F}_U$, where  $\mathcal{F}_U$ is a tor-finite local system on $U$ concentrated in degree 0, and $(x,\xi)$ a smooth point of $SS\mathcal{F}$ with $\xi\neq 0$. Then $\mathcal{F}$ has finite depth at $(x,\xi)$ if both of the following conditions are satisfied:\\
(i) Either $(x,\xi)$ is \emph{not} conormal to $D$, or the component of $SS\mathcal{F}$ that $(x,\xi)$ lies in is the conormal of $D$.\\
(ii) Let $\overline{U}\rightarrow U$ be the minimal\footnote{I.e., the covering corresponding to the quotient $\pi_1(U,\overline{\eta}_U)\twoheadrightarrow G$, where $G$ is the image of $\pi_1(U,\overline{\eta}_U)$ in $\mathrm{Aut}_{\mathbb{Z}/\ell^n}(\mathcal{F}_{\overline{\eta}_U})$.} \etale Galois covering trivialising $\mathcal{F}$, $\overline{X}$ be the normalisation of $X$ in $\overline{U}$, and $\overline{D}=D\times_X\overline{X}$. Then $\overline{X}$ and $\overline{D}_{red}$ are smooth at points above $x$.
\end{theorem}

The proof will give an explicit bound for the depth. We will use materials from §\ref{subsec_swan} and §\ref{subsec_blowup_bound}.

\begin{proof}
    By Remark \ref{rmkttfam}.(v), $\phi_f(\mathcal{F})$ being a local system is equivalent to Diagram \ref{ttfamred} being ULA. By Deligne-Laumon (Theorem \ref{DLthm}), this is equivalent to the function $a_s$ (Formula \ref{DLeqn1}) being constant. By the constructibility of $a_s$, this is further equivalent to being constant for closed $s$. It is elementary to check that assumption (i) ensures Deligne-Laumon is applicable in our situation, and $a_s$ is just the Swan conductor of $\mathcal{F}$ restricted to the curve $C_s:=\{f_s=0\}\subseteq V_s$ at $x$. So it suffices to show:\\
    
    In the setup of the theorem, there exists some $N\in \mathbb{N}$ such that for any test curves $C\equiv C'\mod{\mathfrak{m}_x^N}$ on a same \etale open neighbourhood of $x$,\footnote{I.e., $C, C'$ arise as zero loci of ttfun's on the \etale open.} we have $\mathrm{sw}(C)=\mathrm{sw}(C')$, where $\mathrm{sw}$ denotes the Swan conductor at $x$ of the restriction of $\mathcal{F}$.\\
    
    We first fix $C\hookrightarrow W$ for some \etale open neighbourhood $W$ of $x$ and find $N_C$ such that for every other test curve $C'\hookrightarrow W$ satisfying $C\equiv C'\mod{\mathfrak{m}_x^{N_C}}$, we have $\mathrm{sw}(C)=\mathrm{sw}(C')$. Then we show $N_C$ is bounded uniformly as $C$ varies.\\
    
    Let $\overline{W}\rightarrow W$ be the base change of $\overline{X}\rightarrow X$ via $W\rightarrow X$. Consider the following diagram:
\[\begin{tikzcd}
	{\tilde{C}+E} & {\tilde{W}} \\
	{\overline{C}} & {\overline{W}} \\
	C & W
	\arrow[hook, from=3-1, to=3-2]
	\arrow[from=2-2, to=3-2]
	\arrow[from=2-1, to=3-1]
	\arrow[hook, from=2-1, to=2-2]
	\arrow["\lrcorner"{anchor=center, pos=0.125}, draw=none, from=2-1, to=3-2]
	\arrow[from=1-2, to=2-2]
	\arrow[from=1-1, to=2-1]
	\arrow[hook, from=1-1, to=1-2]
	\arrow["\lrcorner"{anchor=center, pos=0.125}, draw=none, from=1-1, to=2-2]
\end{tikzcd}\]
where $\Tilde{W}$ is obtained from $\overline{W}$ by successive blowups at closed points until $\overline{C}$ is resolved, $\Tilde{C}$ is the strict transform of $\overline{C}$, $E$ is the collection of exceptional divisors (with multiplicities). Note $\tilde{C}$ is smooth and equals to the normalisation of $\overline{C}$.\footnote{E.g., by Zariski's Main Theorem.} We require that we blowup each time simultaneously at all points above $x$ in the strict transforms of $\overline{C}$, so that the $G$-action always extends. Let $M_1=$ maximum of the multiplicities in $E$. Let $M_2=\mathrm{max}_{\sigma\neq \mathrm{id} \in G}\{ep(I_{\Tilde{\sigma},\Tilde{W}})\}$, where $\tilde{\sigma}$ is the extension (by the universal property of normalisations) of $\sigma$ to $\tilde{W}$.\\\\
\underline{Claim}: $N_C:= M_1+M_2\cdot (D\cdot C)_x\cdot |G|$ satisfies our purpose. Here $(D\cdot C)_x$ is the intersection number of $D$ and $C$ at $x$. A simple computation shows that, for $\xi$ as in (i) of Theorem \ref{thmstab}, if $\xi$ is not conormal to $D$, then $(D\cdot C)_x=1$; if $\xi$ is conormal to $D$, then $(D\cdot C)_x=2$.\\\\
Proof of the claim: if  $C'\hookrightarrow W$ is another test curve, let $\doverline{C'}$ be its normalisation. Recall the facts about the Swan conductor in §\ref{subsec_swan}. To show $\mathrm{sw}(C)=\mathrm{sw}(C')$, it suffices to show there exists a bijection of points $\{\tilde{x}\}\leftrightarrow\{\doverline{x'}\}$ of points of $\tilde{C}, \doverline{C'}$ above $x$ and for corresponding points the quantities $\lambda_{\mathcal{O}_{\tilde{C},\tilde{x}}}(\mathcal{O}_{\tilde{C},\tilde{x}}/I_{\tilde{\sigma},\tilde{C}}\mathcal{O}_{\tilde{C},\tilde{x}})$, $\lambda_{\mathcal{O}_{\doverline{C'},\doverline{x'}}}(\mathcal{O}_{\doverline{C'},\doverline{x'}}/I_{\doverline{\sigma'},\doverline{C'}}\mathcal{O}_{\doverline{C'},\doverline{x'}})$ equal for each $\sigma\neq \mathrm{id}$ in $G$. But if $C\equiv C'\mod{\mathfrak{m}_x^{N_C}}$, then $\tilde{C}\equiv \tilde{C'}\mod{I_{E_{red}}^{N_C-M_1}}$, which implies: (a) $\{\Tilde{x}\}:=\tilde{C}\cap E_{red}=\tilde{C'}\cap E_{red}$, and (b) a fortiori $\tilde{C}\equiv \tilde{C'}\mod{\mathfrak{m}_{\tilde{x}}^{N_C-M_1}}$ so $\tilde{C'}$ is also smooth (hence equal to $\doverline{C'}$). From now on we abbreviate $\mathcal{O}_{\tilde{C},\tilde{x}}$ (resp. $\mathcal{O}_{\tilde{C}',\tilde{x}}$) by $\mathcal{O}$ (resp. $\mathcal{O}'$) and drop the subscripts on “$\lambda$”. Consider $\lambda_{\mathcal{O}_{\tilde{C},\tilde{x}}}(\mathcal{O}_{\tilde{C},\tilde{x}}/I_{\tilde{\sigma},\tilde{C}}\mathcal{O}_{\tilde{C},\tilde{x}})$. We have the following estimations: $$\lambda(\mathcal{O}/I_{\tilde{\sigma},\tilde{C}}\mathcal{O})=\lambda(\mathcal{O}/I_{\Tilde{\sigma},\Tilde{W}}\mathcal{O})\leq M_2\cdot \lambda(\mathcal{O}/\sqrt{I_{\Tilde{\sigma},\Tilde{W}}}\mathcal{O})$$
$$\leq M_2\cdot \lambda(\mathcal{O}/\sqrt{I_{\Tilde{D}}}\mathcal{O})\leq M_2\cdot \lambda(\mathcal{O}/I_{\Tilde{D}}\mathcal{O})=M_2\cdot (\Tilde{D}\cdot \Tilde{C})_{\Tilde{x}}\leq M_2.(\Tilde{D}\cdot \Tilde{C})=M_2\cdot (D\cdot C)_x\cdot |G|$$ where $I_{\Tilde{D}}$ is the ideal corresponding to $\Tilde{D}$ in $\Tilde{W}$. For the last equality, we used the projection formula from intersection theory in the form of \cite[\nopp 9.2.13]{liu_algebraic_2002}. Since $\tilde{C}\equiv \tilde{C'}\mod{\mathfrak{m}_{\tilde{x}}^{M_2\cdot (D\cdot C)_x\cdot |G|}}$, $\lambda(\mathcal{O}/I_{\tilde{\sigma},\tilde{C}}\mathcal{O})$ and $\lambda(\mathcal{O}'/I_{\tilde{\sigma},\tilde{C'}}\mathcal{O}')$ must equal because $\tilde{C}$ and $\tilde{C'}$ are equal in the $(M_2\cdot (D\cdot C)_x\cdot |G|)$-th infinitesimal neighbourhood of $\tilde{x}\in \tilde{W}$. This proves the claim.\\

It remains to show $N_C=M_1+M_2\cdot (D\cdot C)_x\cdot |G|$ is bounded with respect to $C$. By Lemma \ref{blowupmult}, $M_1\leq 2^{M_C-1}\cdot \mathrm{max}_{\overline{x}}\{\mathrm{mult}_{\overline{x}}(\overline{C})\}$, where $M_C$ is the smallest number of blowup stages needed to resolve $\overline{C}$. By Lemma \ref{epcomputation}, $M_2\leq (2p+1)^{M_C}\cdot \mathrm{max}_{\sigma\neq \mathrm{id} \in G}$ $\{ep(I_{\sigma,\overline{X}})\}$. So it suffices to bound $M_C$ and $\mathrm{max}_{\overline{x}}\{\mathrm{mult}_{\overline{x}}(\overline{C})\}$. The boundedness of $M_C$ follows from Proposition \ref{prop_boundonBUstages}. We now show that $\mathrm{mult}_{\overline{x}}(\overline{C})\leq |G|\cdot (C\cdot D)_x$. Indeed, denote $\pi: \overline{X}\rightarrow X$, then $\mathrm{mult}_{\overline{x}}(\overline{C})\leq \pi_*(\overline{C}\cdot \pi^* D)=|G|\cdot (C\cdot D)_x$, where the first inequality follows from the fact that at an intersection point of two curves with no common components, the intersection number is greater than or equal to the product of their multiplicities at that point, and the second equality follows from the projection formula in \textit{loc. cit.}
\end{proof}

This completes the proof of Theorem \ref{thmstab}. The version of our main result as stated in the introduction now follows easily from this together with a few results to be proved in the next section.

\begin{theorem}[Generic finite depth]\label{thmstabbeau}
    Let $X$ be a smooth surface over an algebraically closed field $k$ of characteristic $p>2$, and $\mathcal{F}\in D(X)$. Then, there exists a Zariski open dense $V=X-$\{finitely many closed points\} and a Zariski open dense $S\subseteq SS(\mathcal{F}|_V)$ such that for any $(x,\xi)\in S$, there exists an integer $N\geq2$ such that the depth of $\mathcal{F}$ at $(x,\xi)$ is $\leq N$. Moreover, we have an upper bound: if $\mathcal{F}$ is locally constant in some punctured neighbourhood of $x$, then $N=2$; if $x$ lies in a ramification divisor $D$ of $\mathcal{F}$, then $N\leq2^{M-1}\cdot i_x\cdot |G|+(2p+1)^M\cdot \max_{\sigma\neq \mathrm{id}\in G}\{ep(I_{\sigma, \overline{X}})\}\cdot i_x\cdot |G|$, where $i_x=1$ for $(x,\xi)$ not conormal to $D$, $i_x=2$ if the component of $SS\mathcal{F}$ that $(x,\xi)$ lies in is the conormal of $D$, $M=r\cdot s\cdot i_x$, and $r, s$ are as in Proposition \ref{prop_boundonBUstages} (applied to $\overline{X}_{\hat{x}'}\rightarrow X_{\hat{x}}$ for any $x'$ above $x$).
\end{theorem}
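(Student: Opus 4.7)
The plan is to deduce Theorem \ref{thmstabbeau} from Theorem \ref{thmstab} by devissage combined with shrinking $X$ and $SS\mathcal{F}$ to a generic locus. First, I would remove finitely many closed points of $X$ to obtain a Zariski open $V \subseteq X$ satisfying all of the following simultaneously: each cohomology sheaf of $\mathcal{F}$ is locally constant away from a smooth divisor $D \subset V$ (the common ``ramification divisor''); the minimal \'etale Galois cover $\overline{U} \to U := V \setminus D$ trivialising $\mathcal{F}|_U$ has normalization $\overline{V} \to V$ with $\overline{V}$ and $\overline{D}_{\mathrm{red}} := (\overline{V} \times_V D)_{\mathrm{red}}$ both smooth on $V$. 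This is possible because $\overline{V}$ is a two-dimensional normal scheme of finite type over $k$, hence regular away from a finite set, and similarly for $\overline{D}_{\mathrm{red}}$.

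Next, I would apply devissage in the derived category. Iterated truncation triangles $\tau^{\leq n}\mathcal{F} \to \tau^{\leq n+1}\mathcal{F} \to \mathcal{H}^{n+1}(\mathcal{F})[-n-1] \to$, combined with the lemma from the next section asserting subadditivity of depth on distinguished triangles (at points $(x,\xi)$ which are smooth in the singular support of each term), reduce to the case where $\mathcal{F}$ is concentrated in degree $0$. Then the triangle $j_! j^* \mathcal{F} \to \mathcal{F} \to i_* i^* \mathcal{F} \to$, for $j$, $i$ the inclusions of $U$ and $D$ into $V$, further reduces the problem to bounding the depth of $j_! \mathcal{L}$ for a local system $\mathcal{L}$ on $U$ and of $i_* \mathcal{G}$ for a local system $\mathcal{G}$ on the smooth curve $D$ (after one more shrinking so that $i^* \mathcal{F}$ is locally constant).

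Now I would choose $S \subseteq SS(\mathcal{F}|_V)$ by removing the finite set of closed points at which distinct components of $SS(\mathcal{F}|_V)$ cross, or where $SS(\mathcal{F}|_V)$ fails to be smooth, making every $(x,\xi) \in S$ a smooth nonexceptional point in the sense of Theorem \ref{thmstab}. For $(x,\xi) \in S$ with $x \notin D$, the sheaf $\mathcal{F}$ is locally constant in a neighbourhood of $x$, so vanishing cycles along any ttfam vanish identically and the depth is $2$. For $(x,\xi) \in S$ with $x \in D$: Theorem \ref{thmstab} applies directly to $j_! \mathcal{L}$, giving the bound $2^{M-1} \cdot i_x \cdot |G| + (2p+1)^M \cdot \max_{\sigma \neq \mathrm{id}} \{ep(I_{\sigma, \overline{X}})\} \cdot i_x \cdot |G|$ once $M_C$ is bounded by $r \cdot s \cdot i_x$ via Proposition \ref{prop_boundonBUstages}; the term $i_* \mathcal{G}$ is handled by a further auxiliary lemma (next section) on pushforward from a smooth codimension-one subvariety, and the two contributions are combined via the subadditivity lemma.

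The main obstacle is establishing the three auxiliary lemmas on the functorialities of depth---subadditivity across distinguished triangles and compatibility with $i_*$---where the subtle point is verifying that the hypothesis ``$(x,\xi)$ is a smooth point of $SS$'' is preserved through the devissage. A secondary but necessary piece of bookkeeping is to ensure that all the generic smoothness assumptions (on $D$, $\overline{V}$, $\overline{D}_{\mathrm{red}}$, and on $SS\mathcal{F}$), as well as the tor-finiteness needed to invoke Deligne--Laumon inside Theorem \ref{thmstab}, can be simultaneously arranged by removing a single finite set of closed points from $X$.
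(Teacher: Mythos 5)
Your proposal follows essentially the same route as the paper: d\'evissage by truncation and recollement, the subadditivity of depth across distinguished triangles (Lemma \ref{misclemmatriang}), the closed-immersion compatibility (Proposition \ref{misclemmacloimm}) for the $i_*\mathcal{G}$ term, and Theorem \ref{thmstab} for the $j_!\mathcal{L}$ term. One correction: for $(x,\xi)\in S$ with $\mathcal{F}$ locally constant near $x$, the only smooth points of $SS\mathcal{F}$ over $x$ have $\xi=0$, and there the slicewise vanishing cycles $\phi_{f_s}(\mathcal{F})_x$ do \emph{not} vanish (the ttfun has a nondegenerate critical point at $x$); what vanishes is the family vanishing cycle $\Phi_{f_T}(\mathcal{F}_T)$, and that is the nontrivial content of Proposition \ref{LSmucs}, proved by blowing up along $x_T$ — so ``vanish identically'' is not a valid justification there. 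Relatedly, the paper keeps isolated non-locally-constant points and handles them via the purity lemma (Lemma \ref{lemma_purity}) rather than discarding them into the finite set removed from $X$, which is why the clause ``locally constant in some punctured neighbourhood'' appears in the statement.
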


Note $r$ and $s$ do not depend on the choice of $x'$, as the Galois action acts transitively on $\{x'\}$.

\begin{proof}
    It is clear that away from finitely many closed points and by standard dévissage (recollement and induction on amplitudes) (the dévissage works because of Lemma \ref{misclemmatriang}), we are reduced to three cases: (1) $\mathcal{F}$ is locally constant on a punctured neighbourhood of $x$; (2) $\mathcal{F}=i_*\mathcal{L}$ where $i$ is the closed immersion of a smooth curve $D$ on $X$, and $L$ is a local system on $D$; (3) the situation of Theorem \ref{thmstab}. Case (1) follows from Lemma \ref{lemma_purity} and Lemma \ref{misclemmatriang}, case (2) follows from Proposition \ref{LSmucs} and Proposition \ref{misclemmacloimm}, case (3) follows from Theorem \ref{thmstab}.
\end{proof}

\begin{example}
    Consider the sheaf in Example \ref{ex1} and $(x,\xi)=((0,0),dy)$. One checks that the normalisation of $k[x,y]$ in $k(x,y)[t]/(t^p-t-y/x^p)$ is $k[x,y,xt]/((xt)^p-x^{p-1}(xt)-y)$. Let $\tau=xt$, then $\overline{X}=\mathrm{Spec}(k[x,\tau])$. There is a single point $\overline{x}=\{x=\tau=0\}$ above $x$. One checks that $i_x=1$; $G=\mathbb{Z}/p$; $\sigma\in\mathbb{Z}/p$ acts on $\overline{X}$ by $(x,\tau)\mapsto(x,\tau+\sigma x)$, so $\forall \sigma\neq 0, I_{\sigma, \overline{X}}=(x)$ hence $ep(I_{\sigma, \overline{X}})=1$; finally, using the notations of Proposition \ref{prop_boundonBUstages}, $\Delta=-x^{p-1}$,  $\mathfrak{C}(R/A)/R=R\frac{1}{x^{p-1}}/R$, $r=p-1$, and $s=p$. So our bound gives $\mathrm{depth}(\mathcal{F})_{((0,0),dy)}\leq2^{p(p-1)-1}\cdot p+(2p+1)^{p(p-1)}\cdot p$.
    
\end{example}

We remark that, in the above example, by directly computing the Swan of test curves using explicit equations, one can show $\mathrm{depth}(\mathcal{F})=p$ at every point $((0,y),\xi)\in SS\mathcal{F}$ with $\xi\neq 0$ (see Example \ref{depthcompute} for an illustration in a simpler case). So our estimate in Theorem \ref{thmstabbeau} is not sharp.\\

We now discuss Saito's result and its relation with ours.

\begin{theorem}[{\cite[\nopp 2.14]{saito_characteristic_2015}}]\label{thmsaito}
    Let $X$ be a smooth surface over a field $k$ which is algebraically closed of characteristic $p>0$, $\Lambda$ a finite field of characteristic $\ell\neq p$, and $\mathcal{F}\in D(X,\Lambda)$ of the form $\mathcal{F}=j_{!} \mathcal{F}_U$, where $U$ is an open dense subscheme of $X$ and $\mathcal{F}_U$ is a local system on it concentrated in degree 0. Let $Z=X-U$. Let $(x,\xi)\in SS\mathcal{F}$ be a smooth point, $x$ closed. Let $f: X \rightarrow \mathbb{A}^1$ be a morphism such that $(x,\xi)$ is an isolated characteristic point of $f$ with respect to $\mathcal{F}$. Assume $f$ is flat and its restriction to $Z-x$ is \'etale. Then there exists a positive integer $N$ such that for every $g: X \rightarrow \mathbb{A}^1$ satisfying $f \equiv g \mod{\mathfrak{m}_x^N}$, there exists an isomorphism $\phi_f(\mathcal{F})_x \congto \phi_g(\mathcal{F})_x$ as objects in $D^b_c(\Lambda[G_{\eta}])$.
\end{theorem}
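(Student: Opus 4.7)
The plan is to mimic the strategy used for Theorem \ref{thmstab} above, with two simplifications: the bound on $N$ need only depend on $f$ (not be uniform), and the geometric hypotheses on $(x,\xi)$ are replaced by the flatness and étaleness assumptions on $f$. First I would connect $f$ and $g$ by the interpolating morphism $h : X \times \mathbb{A}^1 \to \mathbb{A}^1$, $h(x,t) = (1-t)f(x) + tg(x)$, viewed as a family over $T = \mathbb{A}^1$. For $N$ large enough (depending on $f$), if $g \equiv f \bmod \mathfrak{m}_x^N$, the hypotheses of the theorem for $f$ transfer to each $h_t$: the vanishing locus $V_t = \{h_t = 0\}$ is smooth away from a single closed point over $x$, and $h_t$ remains flat on a neighbourhood of $x$ with restriction to $Z - x$ étale (up to shrinking $X$). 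By the reformulation in Remark \ref{rmkttfam} v), showing that $\phi_{h_t}(\mathcal{F})_x$ is independent of $t$ (as an object of $D^b_c(\Lambda[G_\eta])$) reduces to showing that the pair $(h_T : V_T \to T,\ \mathcal{F}_T)$ is ULA, where $V_T \subseteq X \times T$ is the family zero locus.

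Next I would apply Deligne–Laumon (Theorem \ref{DLthm}) to $h_T$, which is a family of curves with an isolated point of concentration above $x_T$. This translates ULA into the constancy of $a_t = \mathrm{dimtot}\bigl((\mathcal{F}|_{V_{\overline t}})_{\overline{\eta}_{\overline{z}}}\bigr)$, and since $\dim$ is constant, into the constancy of the Swan conductor at $x$ of $\mathcal{F}$ restricted to the slice curve $C_t = \{h_t = 0\}$. Thus the problem reduces to the purely geometric assertion: \emph{for $N$ sufficiently large (depending on $f$), the Swan conductor of $\mathcal{F}|_{C_g}$ at $x$ equals that of $\mathcal{F}|_{C_f}$ at $x$ whenever $g \equiv f \bmod \mathfrak{m}_x^N$.}

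To handle the Swan conductor I would pass to the minimal Galois cover $\overline{U} \to U$ of group $G$ trivialising $\mathcal{F}_U$, and to its normalisation $\overline{X} \to X$. In the present setting $\overline{X}$ need not be smooth above $x$, so one first invokes Lipman/Zariski resolution for normal surface singularities to obtain a smooth model, then further blows up points to resolve the preimage of $C_f$. Let $\widetilde{X} \to \overline{X}$ denote the result, $\widetilde{C}_f$ the (now smooth) strict transform, and $E$ the total exceptional divisor. Let $M_1$ bound the multiplicities occurring in $E$ and $M_2 = \max_{\sigma \neq \mathrm{id}} ep(I_{\tilde\sigma,\widetilde{X}})$, both of which are finite by the Noetherian hypothesis (Lemma \ref{eplocal}). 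Setting $N := M_1 + M_2 \cdot (D.C_f)_x \cdot |G|$ (where $D$ is the union of ramification divisors), the same computation as in the proof of Theorem \ref{thmstab} shows that any $g \equiv f \bmod \mathfrak{m}_x^N$ produces a $\widetilde{C}_g$ whose strict transform agrees with $\widetilde{C}_f$ to high enough order at each point above $x$ that the length computations $\lambda\bigl(\mathcal{O}/I_{\tilde\sigma}\mathcal{O}\bigr)$ coincide for every $\sigma \neq \mathrm{id}$, hence the Swan conductors agree.

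The main obstacle will be verifying that the interpolating family $h$ satisfies the hypotheses of Deligne–Laumon uniformly in $t$, given that $\overline{X}$ may be singular over $x$; here the key technical point is that flatness of $f$ is an open condition, étaleness of $f|_{Z-x}$ is preserved under high-order perturbation, and the isolated characteristic point property is controlled by the same $SS\mathcal{F}$-transversality used throughout. Unlike Theorem \ref{thmstabbeau}, we do not need the uniform bound from Proposition \ref{prop_boundonBUstages}, since $f$ is fixed and $C_f$ is a single curve whose resolution and $ep$-data give a concrete finite $N$. The constructibility part of Deligne–Laumon then upgrades pointwise constancy on closed points of $T$ to constancy on all of $T$, completing the argument.
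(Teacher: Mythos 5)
This statement is not proved in the paper at all: it is quoted from Saito \cite[\nopp 2.14]{saito_characteristic_2015} precisely in order to be \emph{contrasted} with Theorem \ref{thmstab}, so there is no internal proof to compare against. Your proposal is essentially an attempt to re-derive Saito's theorem by the method of Theorem \ref{thmstab}, and the paper's own discussion immediately after the statement explains why that method falls short. The decisive gap is the coefficient category of the conclusion: the ULA/Deligne--Laumon route shows at best that the vanishing cycles form a local system over the parameter space $T$, hence that $\phi_f(\mathcal{F})_x$ and $\phi_g(\mathcal{F})_x$ are isomorphic as complexes of $\Lambda$-modules. Saito's theorem asserts an isomorphism in $D^b_c(\Lambda[G_{\eta}])$, i.e.\ compatibly with the monodromy action, and the paper states explicitly that ``it is not clear what representation-theoretic data is contained in our notion of stability.'' Your argument never addresses the $G_{\eta}$-action, so it does not prove the theorem as stated; Saito's actual proof is a different argument that tracks the Galois structure.

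Even at the level of vector spaces there are unresolved mismatches of hypotheses. First, $(x,\xi)$ is only an \emph{isolated} characteristic point of $f$, not a transverse one, so the slices $h_t$ of your interpolating family are not ttfun's; Remark \ref{rmkttfam} v), whose proof uses $SS\mathcal{F}$-transversality of $f$ away from $x_T$ to get local acyclicity there, does not apply as stated and must be re-established, and the Deligne--Laumon hypothesis that the non-smooth locus $Z\cap C_{h_t}$ be finite \emph{flat} over $T$ with one point per fibre is asserted rather than checked (this is exactly where the flatness of $f$ and the \'etaleness of $f|_{Z-x}$ must enter). Second, Theorem \ref{thmstab} assumes $D$ is a prime divisor smooth at $x$ and, crucially, that $\overline{X}$ and $\overline{D}_{red}$ are smooth above $x$ (its condition ii)); Saito's theorem makes no such assumptions. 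Your appeal to Lipman resolution does not automatically repair this: one must check that the $G$-action lifts to the chosen resolution, and Lemma \ref{epcomputation} requires the fixed locus of each $\sigma\neq\mathrm{id}$ to start out as a smooth prime divisor, a configuration you have not verified after resolving $\overline{X}$. You are right that the uniformity supplied by Proposition \ref{prop_boundonBUstages} is not needed when $f$ is fixed, but the remaining steps above are genuine obstacles, not routine adaptations.
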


    (i) Saito's result fixes a test function $f$ which has an isolated characteristic point (not necessarily transverse), while our result is a \textit{uniform} bound for \textit{transverse} test functions.\\
    
    (ii) The isomorphism of vanishing cycles in Saito's result is an isomorphism of $G_{\eta}$-representations. In our result, although $\phi_f(\mathcal{F})$ being a local system certainly implies $\phi_{f_s}(\mathcal{F})$'s are isomorphic as (complexes of) vector spaces for all closed points $s$ in $T$, it is not clear what representation-theoretic data is contained in our notion of stability. On the other hand, our loss in representation-theoretic data gained us more functoriality. For example, one has a version of the 2-out-of-3 property for the depth, see Lemma \ref{misclemmatriang}.\\

We end this section with two conjectures.

\begin{conjecture}\label{finitedepth}
    Let $X$ be a smooth variety over an algebraically closed field $k$ of characteristic $p\neq 2$.Then every $\mathcal{F}\in D(X)$ has finite depth at all smooth points of $SS\mathcal{F}$.
\end{conjecture}

\begin{conjecture}\label{finitedepthrep}
    Let $X$ be a smooth variety over an algebraically closed field $k$ of characteristic $p\neq 2$, $\mathcal{F}\in D(X)$, and $(x,\xi)$ a smooth point of $SS\mathcal{F}$. Then there exists a positive integer $N$ (depending on $(x,\xi)$) such that for every \etale neighbourhood $U$ of $x$ and $f, g: U \rightarrow \mathbb{A}^1$ satisfying (a) $f$ and $g$ are ttfun's at $(x,\xi)$; (b) $f \equiv g \mod{\mathfrak{m}_x^N}$, there exists an isomorphism $\phi_f(\mathcal{F})_x \congto \phi_g(\mathcal{F})_x$ as objects in $D^b_c(\mathbb{Z}/\ell^n[G_{\eta}])$. 
\end{conjecture}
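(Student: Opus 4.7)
The plan is to deduce Conjecture \ref{finitedepthrep} from Conjecture \ref{finitedepth} by upgrading the local-system statement on the oriented product $\overleftarrow{T}T$ to an isomorphism of $G_\eta$-representations between its fibres. Assume therefore that Conjecture \ref{finitedepth} holds, let $N_0$ be the depth of $\mathcal{F}$ at $(x, \xi)$, and set $N := \max(N_0, 3)$.

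Given two ttfun's $f, g$ on an \'etale neighbourhood $U$ of $x$ with $f \equiv g \bmod \mathfrak{m}_x^N$, my first step is to interpolate them by a ttfam. The hypothesis $N \geq 3$ forces $f$ and $g$ to have the same 2-jet at $x$; in particular $df(x) = dg(x)$ and their Hessians agree. Setting $T := \mathbb{A}^1_k$ and $h_s := (1-s)f + sg$ on $U \times T$, one has $dh_s(x) = df(x)$ and $\mathrm{Hess}(h_s)(x) = \mathrm{Hess}(f)(x)$ for every closed point $s$ of $T$, so each $h_s$ is a ttfun at $(x,\xi)$ and $(T, U, U\times T, h)$ is a ttfam specialising to $f$ at $s=0$ and to $g$ at $s=1$. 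Since $h_s - h_{s'} = (s-s')(g-f) \in \mathfrak{m}_x^N \subseteq \mathfrak{m}_x^{N_0}$ for all closed $s, s' \in T$, Conjecture \ref{finitedepth} produces a local system $\phi_h(\mathcal{F})$ on $\overleftarrow{T}T$.

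The main step is to extract from this local system the sought isomorphism in $D^b_c(\mathbb{Z}/\ell^n[G_\eta])$. The topos $\overleftarrow{T}T$ projects to $T$ with geometric fibre over a closed point $s$ equal to $\mathbb{A}^1_{s,(0)} - \{0\}$, whose \'etale fundamental group is $G_\eta$, and by Remark \ref{rmkttfam} iii) the restriction of $\phi_h(\mathcal{F})$ to this fibre is $\phi_{h_s}(\mathcal{F})_x$ as a $G_\eta$-representation. Any path in $T$ from $0$ to $1$ induces, via the local-system structure, an isomorphism between $\phi_f(\mathcal{F})_x$ and $\phi_g(\mathcal{F})_x$ as vector spaces; the remaining content is that this isomorphism is $G_\eta$-equivariant. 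This should follow from the structure sequence $G_\eta \to \pi_1(\overleftarrow{T}T) \to \pi_1(T) \to 1$ attached to the fibration, combined with the fact that the $\pi_1(T)$-outer action on $G_\eta$ arising from specialisation within the $\mathbb{A}^1_T$-family is trivial.

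The hard part will be this last point: rigorously identifying $\pi_1(\overleftarrow{T}T)$ and checking triviality of the outer action. A natural route is through Orgogozo's theory of nearby cycles over general bases \cite{orgogozo_modifications_2006}, which endows $\Phi_h(\mathcal{F}_V)$ with a continuous functorial $G_\eta$-action that is compatible with base change in $T$ by construction. An independent strategy sidestepping Conjecture \ref{finitedepth} altogether is to upgrade Saito's Theorem \ref{thmsaito} to a uniform version: apply \textit{loc.\ cit.}\ for each ttfun $f$ at $(x,\xi)$ and use constructibility of the total dimension of vanishing cycles (Proposition \ref{dimtotstab} together with the Deligne--Laumon Theorem \ref{DLthm}) to show that the bound $N(f)$ produced there can be taken independent of $f$, thereby securing Conjecture \ref{finitedepthrep} directly.
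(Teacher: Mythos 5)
The statement you are trying to prove is one of the paper's two concluding \emph{conjectures}; the paper offers no proof of it, so there is nothing to compare your argument against except the paper's own discussion of why the representation-theoretic statement is delicate. Your proposal is not a proof. First, it is conditional on Conjecture \ref{finitedepth}, which is equally open (the paper proves finite depth only generically on surfaces, Theorem \ref{thmstabbeau}, and only for the weaker ``local system'' notion of stability). Second, and more seriously, the step you yourself flag as ``the hard part'' --- upgrading the isomorphism of stalks of the local system $\phi_h(\mathcal{F})$ on $\overleftarrow{T}T$ to an isomorphism in $D^b_c(\mathbb{Z}/\ell^n[G_\eta])$ --- is precisely the obstruction the paper identifies: in item ii) after Theorem \ref{thmsaito} the author states explicitly that while $\phi_f(\mathcal{F})$ being a local system implies the slice vanishing cycles are isomorphic as complexes of vector spaces, ``it is not clear what representation-theoretic data is contained in our notion of stability.'' Your proposed exact sequence $G_\eta \to \pi_1(\overleftarrow{T}T) \to \pi_1(T) \to 1$ and the claimed triviality of the outer action are exactly what would need to be established, and for wildly ramified $\mathcal{F}$ the inertia/ramification data genuinely varies along such families (this is the whole phenomenon of \S\ref{sectiononfailure}), so the triviality of the outer action cannot be taken for granted; it is the content of the conjecture, not an input to it.

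The pieces of your argument that do work are already in the paper in a sharper form: the interpolation of two ttfun's with the same $2$-jet by a ttfam is carried out in the proof of Lemma \ref{phiindepofttfun} i) (where one must also shrink $V$ so that each $h_s$ has no other characteristic points --- a point your linear interpolation glosses over), and the identification of the fibres of $\Phi_h(\mathcal{F}_V)$ over closed $s$ with $\phi_{h_s}(\mathcal{F})_x$ is Remark \ref{rmkttfam} iii). Your alternative route via a ``uniform'' version of Saito's Theorem \ref{thmsaito} is also not a proof: Saito's bound $N$ depends on the fixed test function $f$, and making it uniform over all ttfun's is exactly the hard content of Theorem \ref{thmstab}, which the paper only achieves on surfaces, generically, and without the $G_\eta$-structure. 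In short, both of your strategies terminate at the open problem rather than resolving it.
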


\section{$\mu c$ sheaves}\label{section-muc}
We maintain the same setup as in §\ref{stab of phi}. As mentioned in the introduction, our motivation is to build a microlocal sheaf theory in this setting. A microlocal sheaf theory “lives” on the cotangent bundle, but as discussed in §\ref{sectiononfailure}, due to the complexity of $\pi_1$ (or wild ramifications), microlocal data is huge, reflected in the fact that vanishing cycles depend on higher jets of the ttfun. This suggests at least two directions for further investigation: (i) work on a space larger than $T^*X$ (e.g., higher jet bundles), (ii) restrict the class of sheaves. The previous section is a step in (i): we showed that on a surface, generically and pointwise, the vanishing cycles “live” on some finite jet bundle. In this section, we explore the second route.\\

An immediate thought is to restrict to tame sheaves. However, this is not satisfactory, as tameness is not even preserved under the Radon transform (see Example \ref{tamenotradon}), while as mentioned in §\ref{reviewcomplex}, a fundamental feature of microlocal sheaf theory is contact invariance, of which the Radon transform is the prototypical example. Inspired by the situation in the complex analytic context, we instead consider the class of sheaves with the strongest stability. 

\begin{definition}[$\mu c$, $\mu c^s$ sheaves]\label{defmucsheaves}
    A sheaf $\mathcal{F}\in D(X)$ is \underline{$\mu c$ at a smooth point $(x,\xi)\in SS\mathcal{F}$} if for all ttfam's of $\mathcal{F}$ at $(x,\xi)$, $\phi_f(\mathcal{F})$ is a local system (Definition \ref{def_vanishingcycleofttfam}). A sheaf $\mathcal{F}$ is $\mu c$ if it is $\mu c$ at all smooth points of $SS\mathcal{F}$.\\\\
    A sheaf $\mathcal{F}\in D(X)$ is \underline{$\mu c^s$ at a smooth point $(x,\xi)\in SS\mathcal{F}$} if for all smooth morphism $p: Y\rightarrow X$ and $(y,\eta)\in T^*Y$ with $y\mapsto x, \eta=dp(\xi)$, and all ttfam's of $p^*\mathcal{F}$ at $(y,\eta)$, $\phi_f(\mathcal{F})$ is a local system. A sheaf $\mathcal{F}$ is $\mu c^s$ if it is $\mu c^s$ at all smooth points of $SS\mathcal{F}$.
\end{definition}

We record a question we do not know how to answer yet, it is a special case of Question \ref{questiondepthsmpull}.
\begin{question}
    Is $\mu c$ equivalent to $\mu c^s$?
\end{question}

A $\mu c$ sheaf is just a sheaf of depth 2 at all smooth points of its $SS$. We give them a special name as they are closest to the complex analytic case and are good candidates for microlocal constructions. Actually, we have the analogues of Propositions \ref{phistability/C}, \ref{radonstability/C}. (Note, however, we have no control on the representation structure, see item (ii) after Theorem \ref{thmsaito}.)

\begin{lemma}\label{phiindepofttfun}
    (i) Let $\mathcal{F}\in D(X)$ be $\mu c$, and $(x,\xi)$ a smooth point in $SS\mathcal{F}$. Then for every two ttfun's $f$ and $g$ at $(x, \xi)$, there exists (noncanonically) an isomorphism $\phi_f(\mathcal{F})_x\cong \phi_g(\mathcal{F})_x$ as objects in $D^b_c(\mathbb{Z}/\ell^n)$. We call this the microstalk of $\mathcal{F}$ at $(x,\xi)$.\\\\
    (ii) For $\mu c^s$ sheaves, the microstalks are invariant under the Radon transform: let $\mathcal{F}\in D(\mathbb{P})$ be $\mu c^s$, and $(x,\xi)$ be a smooth point of $SS\mathcal{F}$ with $\xi\neq 0$. Let $(a,\alpha)$ be a point in $SSR\mathcal{F}$ corresponding to $(x,\xi)$. Let $f$ (resp. $g$) be a ttfun for $\mathcal{F}$ (resp. $R\mathcal{F}$) at $(x,\xi)$ (resp. $(a,\alpha)$). Then there exists an isomorphism $\phi_g(\mathcal{F})_{a}\cong\phi_f(\mathcal{F})_{x}$ as objects in $D^b_c(\mathbb{Z}/\ell^n)$.
\end{lemma}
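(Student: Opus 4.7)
For part i), the plan is to connect any two ttfun's $f, g$ at $(x,\xi)$ by a single ttfam and then invoke the $\mu c$ hypothesis. Passing to a common \'etale neighbourhood $U$ of $x$ on which both $f$ and $g$ are defined, set $V = U \times T$ (with $T \subseteq \mathbb{A}^1$ to be determined) and consider $F(u,t) := (1-t)f(u) + t g(u)$, viewed as a map $(F, \mathrm{pr}_T): V \to \mathbb{A}^1_T$. Every slice satisfies $F_t(x) = 0$ and $dF_t|_x = (1-t)\xi + t\xi = \xi$, so the only obstruction to $F_t$ being a ttfun at $(x,\xi)$ is transversality of $\Gamma_{dF_t}$ with $SS\mathcal{F}$ at $(x,\xi)$. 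Since $(x,\xi)$ is a smooth point of $SS\mathcal{F}$, the tangent space $T_{(x,\xi)}\Gamma_{dF_t}$ is determined by $d^2F_t|_x$, which depends affinely on $t$; transversality of tangent spaces is an open condition given by nonvanishing of some polynomial in $t$, so the failure locus is Zariski closed in $\mathbb{A}^1$, proper because $t=0,1$ give ttfun's, and therefore finite. Take $T = \mathbb{A}^1$ minus this finite set; then $(T, U, V, F)$ is a ttfam at $(x,\xi)$ with $T$ connected. The $\mu c$ hypothesis implies $\phi_F(\mathcal{F})$ is a local system on $x_T \cong T$, whose stalks at $s = 0, 1$ are canonically $\phi_f(\mathcal{F})_x$ and $\phi_g(\mathcal{F})_x$ by Remark \ref{rmkttfam} iii); a local system on a connected base has noncanonically isomorphic stalks.

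For part ii), the plan is to reproduce the computation in the proof of Proposition \ref{phistabradon} but upgrade the resulting equality of dimtot's to an actual isomorphism in $D^b_c(\mathbb{Z}/\ell^n)$, which is legitimate because $p^*\mathcal{F}$ is $\mu c$ (using $\mu c^s$-ness of $\mathcal{F}$ together with smoothness of $p: Q \to \mathbb{P}$). By compatibility of $\phi$ with the proper pushforward $q$, $\phi_g(R\mathcal{F})_a \cong q_*\phi_{gq}(p^*\mathcal{F})[n-1]$. Lemma \ref{radonintersect} exhibits $gq$ as a ttfun for $p^*\mathcal{F}$ at the unique relevant point $(z,\zeta) \in q^{-1}(a)$ with $z = (x,a)$ and $\zeta = dq(\alpha)$, so the pushforward collapses to $\phi_{gq}(p^*\mathcal{F})_z[n-1]$. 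Since $p^*\mathcal{F}$ is $\mu c$, part i) lets us replace $gq$ by any other ttfun at $(z,\zeta)$; take $p^*f + h$, where $f$ is a ttfun for $\mathcal{F}$ at $(x,\xi)$ and $h$ is a nondegenerate quadratic form in local coordinates along the fibre of $p$ (that this is a ttfun is the same routine exercise as in Proposition \ref{phistabradon}). Applying the Thom-Sebastiani theorem \cite{illusie_around_2017, fu_thom-sebastiani_2014} identifies $\phi_{p^*f + h}(p^*\mathcal{F})_z$ with $\phi_f(\mathcal{F})_x$ up to a shift $-(n-2)-1$; combined with the Radon shift $[n-1]$, the net shift is $(n-1) - (n-2) - 1 = 0$, as in Proposition \ref{phistabradon}, yielding $\phi_g(R\mathcal{F})_a \cong \phi_f(\mathcal{F})_x$.

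The main obstacle is the interpolation argument in part i): verifying that transversality is genuinely a cofinite condition in $t$ uses the smoothness of $SS\mathcal{F}$ at $(x,\xi)$ together with the affine dependence of the 2-jet, and without the smooth-point hypothesis one could in principle have transversality failing on the whole line. A secondary subtlety in part ii) is that in positive characteristic Thom-Sebastiani is naturally phrased as a convolution of $G_\eta$-representations rather than a tensor product; however, since the conclusion is only an isomorphism in $D^b_c(\mathbb{Z}/\ell^n)$ (the $G_\eta$-action is not claimed to be preserved, as highlighted in the discussion after Theorem \ref{thmsaito}), the quadratic factor contributes only a shift, and the shift bookkeeping is identical to that already carried out in Proposition \ref{phistabradon}.
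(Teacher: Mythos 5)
Your proposal is correct, and part ii) is essentially the paper's own argument: push forward along $q$, collapse to the single point $(z,\zeta)$ via Lemma \ref{radonintersect}, swap $gq$ for $p^*f+h$ using the $\mu c$ property of $p^*\mathcal{F}$ (this is exactly how the paper upgrades the dimtot statement of Proposition \ref{phistabradon} to an isomorphism), and apply Thom--Sebastiani with the same shift bookkeeping; your remark that only the underlying complex, not the $G_\eta$-action, survives the convolution step is the same caveat the paper makes after Theorem \ref{thmsaito}.

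In part i) you take a genuinely different route to the same key step (connecting two ttfun's by a ttfam). The paper proceeds in two stages: first a ttfam over $\mathbb{A}^1$ contracting $f$ to its $2$-jet $\sum\xi_ix_i+\sum a_{ij}x_ix_j$ (legitimate because transversality only sees the $2$-jet), then a ttfam over the space $Q$ of admissible quadratic forms, which is open dense in an affine space and hence connected; this exhibits one universal connected family containing all ttfun's at $(x,\xi)$. You instead interpolate linearly, $F_t=(1-t)f+tg$, and observe that $F_t(x)=0$ and $dF_t|_x=\xi$ automatically, while transversality at $(x,\xi)$ is the nonvanishing of a determinant in the Hessian $d^2F_t|_x$, which is affine in $t$; the failure locus is therefore the zero set of a polynomial in $t$, proper because $t=0$ works, hence finite, and $\mathbb{A}^1$ minus a finite set is connected. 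This is shorter and produces a one-parameter family directly, at the cost of the small transversality-in-$t$ argument; the paper's version buys the stronger structural fact that all ttfun's live in a single connected parameter space, which mirrors the complex-analytic proof of Theorem \ref{phistability/C}. One point you gloss over (as does the paper, to be fair) is that a ttfun must meet $SS\mathcal{F}$ \emph{only} at $(x,\xi)$, so $V$ cannot simply be taken to be all of $U\times T$: you must shrink $V$ around $x_T$ so that each slice $\Gamma_{dF_t}$ has no further intersection points with $SS\mathcal{F}$; this is possible because the transverse intersection at $(x,\xi)$ is isolated on each slice. With that adjustment the argument is complete.
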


\begin{proof}
    (i) That $\mathcal{F}$ is $\mu c$ implies that in a ttfam $(T,U,V,g)$, the stalks of $\phi_g(\mathcal{F})$ are all isomorphic. So it suffices to show that any two ttfun's can be connected by a ttfam. Fix an \etale coordinate $\{x_1,...,x_n\}$ at $x$. Let $f$ be a ttfun on some \etale neighbourhood $U$ of $x$. The restriction of $f$ to the strict localisation $X_{(x)}\cong \mathrm{Spec}(k\{x,y\})$ is of the form $f|_{X_{(x)}}=\sum \xi_i x_i+ \sum a_{ij} x_i x_j+ H$, where $\xi_i$ are components of $\xi$ and $H$ means higher order terms. Consider the $\mathbb{A}^1=\mathrm{Spec}(k[s])$-family: $f_s:=f+(s-1)(f-(\sum \xi_i x_i+ \sum a_{ij} x_i x_j))$, then $f_s|_{X_{(x)}}=\sum \xi_i x_i+ \sum a_{ij} x_i x_j+sH$. Note these are also defined on $U$, and since we have not changed $\leq$ second order terms, $\nu$ is still a transverse intersection point of $\Gamma_{df_s}$ and $SS\mathcal{F}$. $f_s$ is a ttfun on some Zariski open neighbourhood $V_s$ of $x\in U$. Put them together, we get a ttfam $(\mathbb{A}^1,U,V,f)$, connecting $f_1=f$ to $f_0=\sum \xi_i x_i+ \sum a_{ij} x_i x_j$. Now consider $Q=$ the space of all quadratic forms $\{b_{ij}\}$ such that $\sum \xi_i x_i+ \sum b_{ij} x_i x_j$ is a ttfun on some Zariski open neighbourhood of $x\in X$. It is an open dense subspace of an affine space. Let $f_{\{b_{ij}\} }=\sum \xi_i x_i+ \sum b_{ij} x_i x_j$. This defines a ttfam $(Q,U',V',f_{\{b_{ij}\}})$ for some Zariski open $U'$ of $X$, connecting all ttfun's parametrised by $Q$.\\
    
    (ii) By Corollary \ref{Radoncompat}, $R\mathcal{F}$ is also $\mu c^s$, so its microstalks are well-defined. The same computation as in the proof of Proposition \ref{phistabradon} then gives the result.
\end{proof}

\begin{remark}\label{dimtotstabproof}
    Here is a direct proof of Proposition \ref{dimtotstab}: by the proof of Lemma \ref{phiindepofttfun}.(i), all ttfun's can be connected via ttfam's. By \cite[\nopp 1.16]{saito_characteristic_2017}, dimtot is constant in a ttfam (note that by the definition of the ttfam, the non-local-acyclicity locus is mapped isomorphically to its image in the base $\mathbb{A}^1_T$, so being flat implies being locally constant in the terminology of \cite[\nopp 1.16]{saito_characteristic_2017}).
\end{remark}

The rest of this section is devoted to:\\

\noindent (i) showing some basic sheaves are $\mu c$ and $\mu c^s$. In particular, tame simple normal crossing sheaves are $\mu c^s$;\\
(ii) showing some functorialities of the $\mu c$ and $\mu c^s$ conditions. In particular, $\mu c^s$ sheaves are preserved under the Radon transform;\\
(iii) examples.

\subsection{Basic objects}\label{subsection-basicobjects}

Recall definitions and remarks in §\ref{testfamsection}.

\begin{lemma}
    If $X$ is a smooth curve, then every $\mathcal{F}\in D(X)$ is $\mu c$.
\end{lemma}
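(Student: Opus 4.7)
The plan is to split into two cases according to the nature of a smooth point $(x,\xi) \in SS\mathcal{F}$, exploiting the very simple structure of singular supports in dimension one. On a smooth curve $X$, the set $SS\mathcal{F}$ is conic and at most one-dimensional inside the two-dimensional bundle $T^*X$, always contains the zero section, and its intersection with each cotangent fibre $T^*_x X \cong \mathbb{A}^1$ is conic, hence either $\{0\}$ or all of $T^*_x X$. Consequently $SS\mathcal{F} = T^*_X X \cup \bigcup_i T^*_{x_i} X$ for finitely many ramification points $x_i$, and a smooth point $(x,\xi)$ is either (a) of the form $(x,0)$ with $\mathcal{F}$ locally constant near $x$, or (b) of the form $(x_i,\xi)$ with $\xi \neq 0$.

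First I would handle case (b). Fix any ttfam $(T,U,V,f)$ at $(x,\xi)$ and observe that $df_s(x) = \xi \neq 0$ on each slice makes $f_s$ \etale at $x$ as a morphism of curves. By the Jacobian criterion, this slice-wise \etaleness is equivalent to \etaleness of the total morphism $f : V \to \mathbb{A}^1_T$ at points of $x_T$, so after shrinking $V$ I may assume $f$ is \etale on a neighbourhood of $x_T$. The base change $f_T : V_T \to T$ is then \etale near $x_T$, nearby cycles for an \etale morphism are trivial, and consequently $\Phi_{f_T}(\mathcal{F}_T) = 0$. By Remark \ref{rmkttfam}~v) this is precisely the statement that $\phi_f(\mathcal{F}) = \Phi_f(\mathcal{F}_V)|_{\overleftarrow{T}T}$ is a local system.

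Next I would handle case (a). Since the $\mu c$ condition is \etale-local, I may pass to a small \etale neighbourhood of $x$ on which $\mathcal{F}$ is a constant complex $\underline{K}$. The transversality at $(x,0)$ with the zero section forces each slice $f_s$ in any ttfam to be Morse at $x$ (vanishing value, vanishing differential, nondegenerate Hessian), so $f_T : V_T \to T$ is finite flat of degree two near $x_T$, with every geometric fibre a length-two fat point supported at $x$. Because \etale cohomology is insensitive to nilpotent thickenings, the finite flat map $f_T$ induces an equivalence of \etale topoi near $x_T$, so $Rf_{T*}\underline{K}$ identifies with the constant sheaf $\underline{K}$ on $T$, which is a local system; thus $(f_T, \mathcal{F}_T)$ is ULA, $\Phi_{f_T}(\mathcal{F}_T) = 0$, and Remark \ref{rmkttfam}~v) again yields that $\phi_f(\mathcal{F})$ is a local system.

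No substantial obstacle is expected; the whole argument is routine once the case split has been identified. The only mildly delicate point is the use in case (a) of the \etale-local triviality of nilpotent thickenings, but this is entirely standard.
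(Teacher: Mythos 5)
Your argument is essentially the paper's own, just written out in more detail: the paper's proof is the one-sentence observation that in Diagram \ref{ttfamred} the map $f_T$ is, near $x_T$, an isomorphism (really a universal homeomorphism) carrying $\mathcal{F}_T$ to a (locally) constant sheaf, so $(f_T,\mathcal{F}_T)$ is ULA; your cases (a) and (b) are the $\xi=0$ and $\xi\neq 0$ instances of this, and both are argued correctly as far as they go.

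The one genuine defect is that your opening dichotomy is not exhaustive. $SS\mathcal{F}$ does \emph{not} always contain the zero section: its base is $\mathrm{supp}\,\mathcal{F}$, so if $\mathcal{F}$ is punctually supported near $x$ then $SS\mathcal{F}$ is locally the full fibre $T^*_xX$ and $(x,0)$ is a \emph{smooth} point of it, while $\mathcal{F}$ is not locally constant near $x$; this point falls into neither (a) nor (b). Moreover, at such a point the Morse claim underlying case (a) fails: $\Gamma_{df}$ meets the fibre $T^*_xX$ transversally at $(x,0)$ regardless of the Hessian (the tangent of the graph projects isomorphically to $T_xX$, the tangent of the fibre is vertical), so $f=t^n$ is a ttfun for every $n\geq 2$ and $f_T$ is not finite flat of degree $2$. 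The missing case is easy — $\mathcal{F}_T$ is then supported on $x_T$, which $f_T$ maps isomorphically onto $T$, so $\Phi_{f_T}(\mathcal{F}_T)=0$ by compatibility with proper pushforward — but it must be included. Relatedly, the reduction to constant coefficients in case (a) is unnecessary: once $f_T$ is a universal homeomorphism near $x_T$, topological invariance of the \etale site gives ULA for \emph{every} sheaf there, which is also the cleanest way to absorb the overlooked case.
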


\begin{proof}
    Let $(x,\xi)$ be a smooth point of $SS\mathcal{F}$. Notice that for any ttfam at $(x,\xi)$, in Diagram \ref{ttfamred}, $f_T$  is an isomorphism and $\mathcal{F}_T$ is a constant sheaf, so $(f_T,\mathcal{F}_T)$ is ULA.
\end{proof}

\begin{proposition}\label{LSmucs}
    Local systems are $\mu c^s$.
\end{proposition}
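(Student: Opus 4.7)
The strategy is a two-step reduction followed by a local computation. Since smooth pullback preserves local systems, for any smooth $p\colon Y \to X$ the sheaf $p^*\mathcal{F}$ is again a local system on $Y$; thus it suffices to prove that any local system $\mathcal{G}$ on any smooth variety is $\mu c$, and then the $\mu c^s$ property follows at once. For such a $\mathcal{G}$ one has $SS\mathcal{G} = T^*_Y Y$, so every smooth point of $SS\mathcal{G}$ has the form $(y,0)$, and a ttfun at $(y,0)$ is exactly a function $f$ with $f(y)=0$, $df(y)=0$, and nondegenerate Hessian at $y$ (a Morse critical point). Given a ttfam $(T,U,V,f)$ at $(y,0)$, Remark \ref{rmkttfam} v) reduces the claim that $\phi_f(\mathcal{G})$ is a local system to showing that the pair $(f_T\colon V_T \to T,\ \mathcal{G}_T)$ is universally locally acyclic.

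Since ULA can be tested \etale-locally on $V_T$ and $\mathcal{G}_T$ is itself a local system, after trivialising $\mathcal{G}_T$ on a finite \etale cover I may replace it by the constant sheaf $\Lambda$. Away from the critical section $y_T$, the transversality in the ttfam definition forces $df$ to be nowhere zero, so $f_T$ is smooth there and ULA is immediate.

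The work concentrates near $y_T$, where I plan to invoke an \etale Morse lemma in families (available in characteristic $\neq 2$). In local coordinates $(z,s)$ on $V$ with $z$ vanishing on $y_T$, the conditions $f|_{y_T}=0$ and $d_z f|_{y_T}=0$ give an expansion $f = Q(z,s) + O(z^3)$ with $Q(z,s) = \sum_{i,j} Q_{ij}(s)\, z_i z_j$ a family of nondegenerate quadratic forms in $z$, nondegeneracy being the slice-wise Hessian condition. I would first diagonalise $Q$ over $\mathcal{O}_T$, then pass to the finite \etale cover of $T$ extracting square roots of the (unit) diagonal entries to reduce to $Q = \sum z_i^2$, and finally apply the Morse lemma proper on $V$ to absorb the cubic remainder by an \etale change of coordinates. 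In the resulting coordinates, $V_T$ is \etale-locally the product $\{\sum z_i^2 = 0\}\times T$ and $f_T$ is the projection to $T$, for which $(f_T,\Lambda)$ is visibly ULA since the family is a product. Descending along the finite \etale cover of $T$ using that ULA descends through finite \etale surjections concludes the argument.

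The main obstacle is the careful execution of the family Morse lemma, in particular the base change of $T$ needed to extract square roots of the Hessian entries; conceptually the statement just reflects the classical Picard--Lefschetz picture that in characteristic $\neq 2$ a family of ordinary quadratic singularities has constant (local-system) vanishing cycles.
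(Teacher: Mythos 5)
Your argument is correct in outline, but it takes a genuinely different route from the paper. The paper, after the same two reductions (smooth pullbacks of local systems are local systems, hence $\mu c^s$ reduces to $\mu c$; the statement is \'etale-local so one may assume the sheaf constant), does \emph{not} normalise $f$: it blows up $V$ along the critical section $x_T$, observes that the total transform of $V_T$ becomes a simple normal crossing divisor \emph{relative to} $T$, writes $j_!j^*\mathcal{F}$ as $\pi_*\overline{j}_!\mathcal{G}$, invokes Saito's computation of $SS$ for snc $!$-extensions to get transversality of $\overline{V}\rightarrow T$, and concludes ULA by properness of $\pi$ and the triangle $j_!j^*\mathcal{F}\rightarrow\mathcal{F}\rightarrow i_*\mathcal{F}_T$. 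You instead put the family into Morse normal form: \'etale-locally near each closed point of $y_T$ (residue field algebraically closed, $p\neq 2$), the iterated completing-the-square argument in the strictly henselian local ring does produce relative coordinates with $f=\sum w_i^2$ --- every unit is a square there, so in fact no finite \'etale cover of $T$ is needed, only \'etale localisation on $V$; this is essentially the relative form of the ordinary-quadratic-point normalisation of \cite[Exp.\ XV, 1.2--1.3]{SGA7}. The resulting constant family $\{\sum w_i^2=0\}\times T\rightarrow T$ is ULA by Deligne's local acyclicity over a point plus stability of ULA under base change, and ULA is indeed checkable \'etale-locally on the source, which also justifies your reduction to constant coefficients (do use \'etale-locality rather than a trace argument, since the degree of the trivialising cover may be divisible by $\ell$). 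What each approach buys: yours is more elementary and self-contained (no blowup, no $SS$ formula for snc extensions) and makes the Picard--Lefschetz picture explicit; the paper's resolve-to-snc strategy is the one that generalises, since the very next proposition (tame snc sheaves are $\mu c^s$) uses the same blowup-and-transversality mechanism in a setting where no Morse normal form exists.
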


\begin{proof}
    It suffices to show they are $\mu c$ because pullback of local systems are local systems. The problem being \etale local, we may assume the sheaf is constant. Let $\underline{M}_X\in D(X)$ be a constant sheaf, $x\in X$.  Let $(T, U, V, f)$ be a ttfam for $\underline{M}_X$ at $(x, \xi=0)$. On each slice $V_s\xrightarrow{f_s} \mathbb{A}^1_s$, $f_s$ being a ttfun implies it has a nondegenerate quadratic singularity at $x$ over $0\in \mathbb{A}^1_s$ (in the sense of \cite[Exp. XV, 1.2.1]{SGA7}). We want to show Diagram \ref{ttfamred} is ULA. Consider the following diagram:
\[\begin{tikzcd}
	{\overline{V_T}} & {\overline{V}} & {\overline{V}-\overline{V_T}} \\
	{V_T} & V & {V-V_T} \\
	T
	\arrow["{f_T}"', from=2-1, to=3-1]
	\arrow["i", hook, from=2-1, to=2-2]
	\arrow["h", from=2-2, to=3-1]
	\arrow[from=1-1, to=2-1]
	\arrow[hook, from=1-1, to=1-2]
	\arrow["\pi"', from=1-2, to=2-2]
	\arrow["j"', hook', from=2-3, to=2-2]
	\arrow["\cong", from=1-3, to=2-3]
	\arrow["{\overline{j}}"', hook', from=1-3, to=1-2]
\end{tikzcd}\]
    where $h$ is the composition of $f$ and the projection $\mathbb{A}^1_T\rightarrow T$, $\pi: \overline{V}\rightarrow V$ is the blowup of $V$ along $x_T$. Note $\overline{V_T}\hookrightarrow \overline{V}$ is a simple normal crossing divisor over $T$. By the distinguished triangle $j_!j^*\underline{M}_V\rightarrow \underline{M}_V\rightarrow i_*i^*\underline{M}_V\rightarrow$ and the fact that $(h,\underline{M}_V)$ is ULA, to show $(f_T,i^*\underline{M}_V)$ is ULA, it suffices to show $(h,j_!j^*\underline{M}_V)$ is ULA. But $j_!j^*\underline{M}_V\cong \pi_*\overline{j}_!\underline{M}_{(\overline{V}-\overline{V_T})}$. By \cite[\nopp 4.11]{saito_characteristic_2017}, $SS(\overline{j_!}\underline{M}_{(\overline{V}-\overline{V_T})})=T_{\overline{V}}^*\overline{V}\cup T_{\overline{V}_T}^*\overline{V}$,\footnote{See Footnote \footref{footnoteonnotation} for the notation.} so $\overline{V}\rightarrow T$ is $SS(\overline{j_!}\underline{M}_{(\overline{V}-\overline{V_T})})$-transversal, so $(h\pi, \overline{j}_!\underline{M}_{(\overline{V}-\overline{V_T})})$ is ULA. By the compatibility of vanishing cycles and proper pushforwards, $\Phi_h(\pi_*\overline{j}_!\underline{M}_{(\overline{V}-\overline{V_T})})\cong\overleftarrow{\pi_*}\Phi_{h\pi}(\overline{j}_!\underline{M}_{(\overline{V}-\overline{V_T})})=0$. 
\end{proof}

\begin{proposition}\label{tamearemuc}
    Let $D\subseteq X$ be an sncd, and $j: U\rightarrow X$ its complement. If $\mathcal{F}\in D(X)$ is of the form $\mathcal{F}=j_!\mathcal{F}_U$ for $\mathcal{F}_U$ a local system tame along $D$, then $\mathcal{F}$ is $\mu c$ (hence $\mu c^s$ because its smooth pullbacks are of the same form).
\end{proposition}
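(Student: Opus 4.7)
Following the strategy of the proof of Proposition \ref{LSmucs}, the plan is to reduce to a ULA statement and then handle the failure of transversality between $V_T$ and $\tilde{D}$ by an explicit resolution. By Remark \ref{rmkttfam} v), it suffices to show that for every ttfam $(T,U,V,f)$ at $(x,\xi)$, the pair $(f_T: V_T\to T, \mathcal{F}_T)$ is ULA. If $\xi=0$, smoothness of $(x,\xi)$ in $SS\mathcal{F} = T^*_XX \cup \bigcup_I T^*_{D_I}X$ forces $x\notin D$, so $\mathcal{F}$ is a local system in a neighbourhood of $x$ and the claim follows from Proposition \ref{LSmucs}. So I may assume $\xi\neq 0$, and let $I\subseteq\{1,\ldots,k\}$ be the unique multi-index with $(x,\xi)\in T^*_{D_I}X$.

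Work in \'etale local coordinates where $D_i=\{x_i=0\}$ and $\xi=\sum_{i\in I}a_i dx_i$ (say $a_1\neq 0$), and let $\tilde{D}=D\times T\cap V$. Because $df_s(x)=\xi\neq 0$ for every $s\in T$, $V_T$ is smooth and smooth over $T$ near $x_T$, and each hypersurface $V_T\cap\tilde{D}_i$ is smooth at $x_T$. However, the divisors $\{V_T\cap\tilde{D}_i\}_{i\in I}$ on $V_T$ fail to form an SNC divisor at $x_T$: since $\xi$ vanishes on $T_xD_I$, the tangent space of $V_T$ at $x$ contains $T_xD_I$, which makes the conormals of the $V_T\cap\tilde{D}_i$ in $V_T$ linearly dependent. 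On the other hand, transversality of $\Gamma_{df_s}$ with $T^*_{D_I}X$ at $(x,\xi)$ is exactly the condition that $f_s|_{D_I}$ has a nondegenerate critical point at $x$, which in characteristic $p>2$ can be diagonalised. The plan is to construct $\pi:\widetilde{V_T}\to V_T$ as the blowup of $V_T$ along $x_T$ and to verify by a direct computation in local charts -- using the nondegeneracy of the Morse form uniformly in $s$ -- that the total transform $\pi^{-1}(\tilde{D}\cap V_T)$, i.e.\ the union of the exceptional divisor and the strict transforms of the $V_T\cap\tilde{D}_i$, is an SNC divisor in $\widetilde{V_T}$ all of whose strata are smooth over $T$. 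The key local fact is that, for $i\in I$ with $i\neq 1$, the equation of $V_T\cap\tilde{D}_1$ factors after blowup as $x_2\cdot(c_2+\sum_{j\geq 3}c_j y_j+O(x_2))$ with $c_2\neq 0$, so the strict transform separates from the others on the exceptional divisor.

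Once the resolution is in place, $\pi_*\pi^*\mathcal{F}_T=\mathcal{F}_T$ by a direct stalk calculation: $\pi$ is an isomorphism off $x_T$, and its exceptional fibre lies in $\pi^{-1}(\tilde{D}\cap V_T)$ where the pullback of the $!$-extension $\mathcal{F}_T$ vanishes. Because $\mathcal{F}_U$ is tame along $D$ and the blowup centre lies in $\tilde{D}$, $\pi^*\mathcal{F}_T$ is the $!$-extension of a tame local system along the SNC divisor $\pi^{-1}(\tilde{D}\cap V_T)$. By Saito's formula \cite[\nopp 4.11]{saito_characteristic_2017}, $SS(\pi^*\mathcal{F}_T)\subseteq T^*_{\widetilde{V_T}}\widetilde{V_T}\cup T^*_{\pi^{-1}(\tilde{D}\cap V_T)}\widetilde{V_T}$; the $T$-smoothness of every stratum of this divisor makes $f_T\pi$ transversal to $SS(\pi^*\mathcal{F}_T)$, so $(f_T\pi,\pi^*\mathcal{F}_T)$ is ULA. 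By compatibility of ULA with proper pushforward, $(f_T,\mathcal{F}_T)=(f_T,\pi_*\pi^*\mathcal{F}_T)$ is ULA. The main obstacle is the explicit local verification in the second paragraph: the single blowup must simultaneously separate all the strict transforms \emph{and} produce strata smooth over $T$, and this rests on the Morse condition holding uniformly across $s\in T$, which is precisely what the ttfam hypothesis encodes.
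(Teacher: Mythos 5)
Your overall scaffold is the same as the paper's: reduce to showing $(f_T,\mathcal{F}_T)$ is ULA via Remark \ref{rmkttfam} v), resolve the embedded singularity $D_T\hookrightarrow V_T$ to a relative SNC divisor by blowing up, use $\pi_*\pi^*\mathcal{F}_T=\mathcal{F}_T$ and the fact that tame SNC sheaves have conormal singular support (so $f_T\pi$ is transversal), and conclude by properness. The handling of $\xi=0$ and the final ULA/pushforward steps are fine.

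The gap is in the resolution step: a single blowup of $V_T$ along $x_T$ does \emph{not} make the total transform of $D_T$ an SNC divisor once $|I|\geq 2$, and your ``key local fact'' that the strict transforms separate on the exceptional divisor is false. Take $X=\mathbb{A}^3$, $D=D_1\cup D_2$ with $D_i=\{x_i=0\}$, $\xi=dx_1+dx_2$, and the ttfun $f=x_1+x_2+x_3^2$, so $H=f^{-1}(0)$ is a smooth surface. Then $D_1\cap H$ and $D_2\cap H$ are smooth curves in $H$ which are \emph{simply tangent} at $x$ (both have tangent direction $\partial_{x_3}$; their intersection number in $H$ is $2$). After one blowup of $H$ at $x$, the two strict transforms still meet at a point of the exceptional divisor $E$ (the point of $E$ corresponding to their common tangent direction), so $E$ and the two strict transforms are three divisor components through a single point of a surface: pairwise transverse, but not SNC. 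At such a point the singular support of the $!$-extended tame sheaf is not contained in the union of conormals of SNC strata, so the transversality of $f_T\pi$ fails exactly there and the argument breaks. This is why the paper performs a \emph{second} blowup, along the intersection of the exceptional divisor with the strict transform of $D_{1,T}\cap\dots\cap D_{r-1,T}$ (in the example above, along the offending point in each slice), after which the configuration genuinely becomes SNC relative to $T$. The Morse/transversality condition you invoke guarantees that this two-step recipe works uniformly in $s$, but it does not rescue the one-step version.
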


Recall that in this situation $SS\mathcal{F}=T^*_XX\cup T^*_D X$ by \cite[\nopp 4.11]{saito_characteristic_2017}. We will use materials from §\ref{subsec_resolution} in this proof.

\begin{proof}
    The question being local, we may assume we are in the following situation: $X$ is affine (and smooth), $\{x_1,...,x_n\}$ is an \etale coordinate system on $X$ (i.e. $\{x_1,...,x_n\}\subseteq \mathcal{O}_X(X)$ and the map $X\xrightarrow{(x_1,...,x_n)}\mathbb{A}^n$ is \etale), and $D=\cup_{i=1}^rD_i=\cup_{i=1}^r\{x_i=0\}$ is the decomposition of $D$ into its irreducible components, $0<r\leq n$. The locally constant locus has been dealt with in the previous proposition. It suffices to show $\mathcal{F}$ is $\mu c$ at $(x,\xi)$ for $x=\mathrm{origin}$, and $\xi=dx_1+...+dx_r$.\\
    
    Let $(T,U,V,f)$ be a ttfam for $\mathcal{F}$ at $(x,\xi)$. We want to show $\Phi_{f_T}(\mathcal{F}_{T})=0$ in Diagram \ref{ttfamred}. For this, we need to understand the geometry of $D_T:=(D\times T)\cap V_T\hookrightarrow V_T$ near $x_T$. First look at each slice $D=D\times \{s\}\hookrightarrow V_s$. By Lemma \ref{lem_resolution1} and Lemma \ref
    {lem_resolution2}, we have: \\
    
    \underline{Fact}: the embedded singularity $D\cap H_s\hookrightarrow H_s$ can be resolved in two steps: first blow up at $x$, then blow up along the intersection of the exceptional divisor with the strict transform of $D_1\cap ... \cap D_{r-1}\cap H_s$. (There is only one blowup for $r=1$.)\footnote{Of course the choice $\{1,2,...,r-1\}$ is unimportant: one can choose any $r-1$ elements in $\{1,2,...,r\}$.}\\
    
    It follows that the embedded singularity $D_T\hookrightarrow V_T$ can be resolved by first blowing up along $x_T$, then blowing up along the intersection of the exceptional divisor with the strict transform of $D_{1,T}\cap ... \cap D_{r-1,T}$, where $D_{i,T}:=(D_i\times T)\cap V_T$. We get the following diagram:
\[\begin{tikzcd}
	{V_T} & {\overline{V_T}} \\
	T
	\arrow["\pi"', from=1-2, to=1-1]
	\arrow["{f_T}"', from=1-1, to=2-1]
	\arrow["{g_T}", from=1-2, to=2-1]
\end{tikzcd}\]
where $\pi$ is proper and induces an isomorphism over $V_T-D_T$, and $\pi^{-1}(D_T)\hookrightarrow \overline{V_T}$ is a sncd \emph{relative to $T$}. Note $\mathcal{F}_{T}=\pi_*\pi^*\mathcal{F}_{T}$ (because $\mathcal{F}_{T}$ is a !-extension from the open), and $\pi^*\mathcal{F}_{T}$ is still a sncd tame sheaf (by \cite[\nopp 4.4]{kerz_different_2010}). So $\Phi_{f_T}(\mathcal{F}_{T})=\overleftarrow\pi_*\Phi_{g_T}(\pi^*\mathcal{F}_{T})=0$, where the last equality comes from the fact that $SS$ of a sncd tame sheaf is conormal.
\end{proof}

\subsection{Properties}\label{subsection-properties}

\begin{proposition}[closed immersion]\label{misclemmacloimm}
    Let $i: Z\hookrightarrow X$ be a closed immersion of smooth varieties, and $\mathcal{F}\in D(Z)$. Then:\\
    (i) $\mathcal{F}$ is $\mu c$ if and only if  $i_*\mathcal{F}$ is $\mu c$. More generally, if $(z, \zeta)$ is a smooth point in $SS\mathcal{F}$ and $(x, \xi)$ is in $SS(i_*\mathcal{F})$ such that $x=i(z), di(\xi)=\zeta$, then $\mathrm{depth}(\mathcal{F})_{(z,\zeta)}=\mathrm{depth}(i_*\mathcal{F})_{(x,\xi)}$.\\
    (ii) $\mathcal{F}$ is $\mu c^s$ if and only if  $i_*\mathcal{F}$ is $\mu c^s$.
\end{proposition}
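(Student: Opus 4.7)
The strategy rests on two ingredients: Beilinson's description $SS(i_*\mathcal{F}) = i_\circ SS\mathcal{F}$ (so that smooth points $(x,\xi)$ of $SS(i_*\mathcal{F})$ over $x \in Z$ correspond bijectively to smooth points $(z,\zeta)$ of $SS\mathcal{F}$ together with a lift $\xi \in T^*_xX$ of $\zeta$ along $T^*X|_Z \twoheadrightarrow T^*Z$), and the compatibility of general-base vanishing cycles with the proper pushforward $i_*$. I will establish a bijection between ttfams at corresponding points preserving both transversality and the ``$\bmod \mathfrak{m}^N$'' condition, then transport the local-system condition through $i_*$.

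\emph{Step 1 (ttfun/ttfam correspondence).} Work in coordinates $x_1,\ldots,x_n$ on $X$ with $Z=\{x_{k+1}=\cdots=x_n=0\}$. Then the tangent space to $i_\circ SS\mathcal{F}$ at $(x,\xi)$ splits as $\langle \partial_{\xi_{k+1}},\ldots,\partial_{\xi_n}\rangle$ (the normal cotangent fibre) direct sum with the natural lift of $T_{(z,\zeta)}SS\mathcal{F}$ along $T^*X|_Z = T^*Z \oplus N^*_{Z/X}$. A short linear-algebra computation then shows: the intersection of $T\Gamma_{df}$ with $T(i_\circ SS\mathcal{F})$ at $(x,\xi)$ is trivial iff the intersection of $T\Gamma_{d(f|_Z)}$ with $TSS\mathcal{F}$ at $(z,\zeta)$ is trivial (the normal-direction equations force a candidate common tangent vector to lie in the analogous intersection for $f|_Z$, and then to vanish). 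Hence $f$ with $df(x)=\xi$ is a ttfun for $i_*\mathcal{F}$ iff $f|_Z$ is a ttfun for $\mathcal{F}$. Restriction of ttfuns from $X$ to $Z$ is immediate; extension is achieved by adding $\sum_{i>k}\xi_i x_i$ to any lift. Applying this slicewise gives a bijection between ttfams at $(x,\xi)$ and ttfams at $(z,\zeta)$, and since restriction/extension is linear in the normal coordinates, the condition $f_s\equiv f_{s'}\bmod\mathfrak{m}^N$ is preserved in both directions.

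\emph{Step 2 (vanishing cycles and conclusion).} Since $i$ is proper, general-base vanishing cycles commute with $i_*$:
\[\Phi_f(i_*\mathcal{F}_V) \cong i_*\Phi_{f|_{V\cap Z_T}}(\mathcal{F}_{V\cap Z_T}).\]
Restricting to $\overleftarrow{T}T$: since $x \in Z$, the immersion $i$ is the identity on $x_T$, so $i_*$ preserves and reflects the local-system condition for sheaves on $\overleftarrow{T}T$. Combined with Step 1, this yields (i) and, by tracking the minimal $N$, (i$'$). For (ii), given smooth $p:Y\to X$, smooth base change along the Cartesian square gives $p^*i_*\mathcal{F} \cong i'_*(p')^*\mathcal{F}$ with $i':Y\times_X Z \hookrightarrow Y$ a closed immersion of smooth varieties and $p':Y\times_X Z \to Z$ smooth; applying (i) to $i'$ reduces $\mu c$-ness of $p^*i_*\mathcal{F}$ to that of $(p')^*\mathcal{F}$. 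The ``only if'' direction uses that every smooth $q:W\to Z$ étale-locally factors through a projection $\mathbb{A}^r\times Z\to Z$, which arises as $p'$ for $p:\mathbb{A}^r\times X\to X$, combined with the étale-locality of $\mu c$ (Remark \ref{depthdefremark} ii).

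\textbf{Main obstacle.} The crux is the transversality equivalence in Step 1; it is an ``iff'' rather than just a one-sided implication precisely because $i_\circ SS\mathcal{F}$ contains all normal cotangent directions, and this two-sidedness is what allows us to deduce both ``$\mathcal{F}$ $\mu c \Rightarrow i_*\mathcal{F}$ $\mu c$'' and its converse. A secondary technical point is ensuring that the general-base $\Phi$ commutes with $i_*$ in the form used above and interacts cleanly with the orientation of $\overleftarrow{T}T$; this follows from the Orgogozo--Illusie framework but must be handled with care when restricting from the vanishing topos to $\overleftarrow{T}T$.
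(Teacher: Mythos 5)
Your proposal follows essentially the same route as the paper's proof: identify ttfams across the closed immersion via the transversality equivalence coming from $SS(i_*\mathcal{F})=i_\circ SS\mathcal{F}$, transport the local-system condition through the compatibility of $\Phi$ over general bases with proper pushforward (using that $\overleftarrow{i}$ is an isomorphism over $x_T\cong z_T$), and handle ii) by smooth base change plus the reduction of an arbitrary smooth map to a projection $\mathbb{A}^m\times Z\to Z$. The one step where your argument is too loose is the extension of ttfams from $Z$ to $X$. Taking ``any lift'' and adding $\sum_{i>k}\xi_i x_i$ has two problems: the lift must be produced as a single morphism over the whole parameter space $T$ on an \'etale neighbourhood (one must first extend the \'etale neighbourhood $Z'$ of $z$ to one of $x$ in $X$, via \cite[\nopp 18.1.2]{EGAIV}), and, more seriously for i$'$), arbitrary lifts do \emph{not} preserve the congruence condition: if $g_s\equiv g_{s'}\bmod\mathfrak{m}_{z,Z}^N$, independently chosen lifts only agree modulo $I_Z+\mathfrak{m}_{x,X}^N$. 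The paper resolves both points at once by composing with an \'etale-local retraction $r\colon\tilde{X}'\to Z'$ (Lemma \ref{owenretraction}), so that $\tilde{f}=f\circ r$ is automatically a family and Lemma \ref{conglemma} yields the congruence. Your phrase ``linear in the normal coordinates'' is implicitly choosing such a splitting, so the idea is right, but it must be made explicit; relatedly, your claimed ``bijection'' of ttfams is an overstatement (restriction followed by extension does not recover the original family), though only the two one-directional constructions are actually needed. Finally, the edge case $\zeta=0$ (where $\mathcal{F}$ is locally constant near $z$ and the lift $\xi$ may be purely conormal or zero) deserves the separate sentence the paper gives it, since your tangent-space computation presumes a clean smooth-point correspondence.
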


\begin{proof}
    (i) Let $(z, \zeta)$ be a point in $SS\mathcal{F}$, and $(x, \xi)$ be a point in $SS(i_*\mathcal{F})=i_{\circ}SS\mathcal{F}$ such that $x=i(z), di(\xi)=\zeta$. First note that $(x, \xi)$ is a smooth point of $SS(i_*\mathcal{F})$ if and only if $(z, \zeta)$ is a smooth point of $SS\mathcal{F}$. This follows from the observation that in 
    the following correspondence $u$ is smooth and $v$ is a closed immersion. 
\[\begin{tikzcd}
	& {Z\times_XT^*X} \\
	{T^*Z} && {T^*X}
	\arrow["u"', from=1-2, to=2-1]
	\arrow["v", from=1-2, to=2-3]
\end{tikzcd}\]

\underline{$\mathcal{F}$ $\mu c \Rightarrow i_*\mathcal{F}$ $\mu c$}: Let $(T, U, V, f)$ be a ttfam at $(x, \xi)$ for $i_*\mathcal{F}$, we want to show $\phi_f(i_*\mathcal{F})$ is locally constant. Consider the restriction of $(T,U,V,f)$ to $Z$:
\[\begin{tikzcd}
	{Z\times T} & {U_Z\times T} & {V_Z} & {z_T} \\
	{X\times T} & {U\times T} & V & {x_T} \\
	{} && {\mathbb{A}^1_T}
	\arrow[hook,"i", from=1-1, to=2-1]
	\arrow[hook, from=1-2, to=2-2]
	\arrow[from=1-2, to=1-1]
	\arrow[from=2-2, to=2-1]
	\arrow["\sim", from=1-4, to=2-4]
	\arrow["i'", from=1-3, to=2-3]
	\arrow["f", from=2-3, to=3-3]
	\arrow[hook', from=1-3, to=1-2]
	\arrow[hook', from=2-3, to=2-2]
	\arrow[hook', from=1-4, to=1-3]
	\arrow[hook', from=2-4, to=2-3]
	\arrow["\lrcorner"{anchor=center, pos=0.125, rotate=-90}, draw=none, from=1-2, to=2-1]
	\arrow["\lrcorner"{anchor=center, pos=0.125, rotate=-90}, draw=none, from=1-3, to=2-2]
\end{tikzcd}\]

By the compatibility of vanishing cycles (over general bases) with proper pushforwards, $\Phi_f((i_*\mathcal{F})_V)\cong \overleftarrow{i'}_*\Phi_{fi'}(\mathcal{F}_{V_Z})$. But $\Phi_f((i_*\mathcal{F})_V)$ is supported on $z_T\overleftarrow{\times}_{\mathbb{A}^1_T}\mathbb{A}^1_T$ and $\overleftarrow{i'}$ restricted to $z_T\overleftarrow{\times}_{\mathbb{A}^1_T}\mathbb{A}^1_T$ is an isomorphism, so $\phi_f(i_*\mathcal{F})=\Phi_f(i_*\mathcal{F}_V)|_{z_T\overleftarrow{\times}_{\mathbb{A}^1_T}(\mathbb{A}^1_T-T)}\cong \Phi_{fi'}(\mathcal{F}_{V_Z})|_{z_T\overleftarrow{\times}_{\mathbb{A}^1_T}(\mathbb{A}^1_T-T)}=\phi_{fi'}(\mathcal{F})$. So, $\mathcal{F}$ being $\mu c$, it suffices to show the restriction of $(T, U, V, f)$ to $Z$ is a ttfam for $\mathcal{F}$ at $(z, \zeta)$. In the definition of the ttfam (i), (ii) are clear. We check (iii):\\

The computation being local, we may assume $V_s=X$. Consider the correspondence above. Abbreviate $SS\mathcal{F}$ as $C$. We want to compute $C\cdot \Gamma_{d(f_s|_Z)}=C\cdot (uv^{-1}\Gamma_{df_s})$. First note $(vu^{-1}C)\cdot \Gamma_{df_s}(=(i_{\circ}SS\mathcal{F})\cdot \Gamma_{df_s}=1\cdot (x,\xi))$, $(u^{-1}C)\cdot v^{-1}\Gamma_{df_s}$, $C\cdot (uv^{-1}\Gamma_{df_s})$ are all supported at a single point because $f_s$ is a ttfun, $u$ is smooth and $v$ is a closed immersion. Then compute: $(vu^{-1}C)\cdot \Gamma_{df_s}=(v_*u^{-1}C)\cdot \Gamma_{df_s}=(u^{-1}C)\cdot v^*\Gamma_{df_s}$, where the second equality comes from $v$ being a closed immersion, third equality comes from the projection formula in intersection theory. A simple computation in a local coordinate shows that the intersection of $\Gamma_{df_s}$ and $Z\times_XT^*X$ is transverse. So $v^{-1}\Gamma_{df_s}$ is also smooth and $(u^{-1}C)\cdot v^*\Gamma_{df_s}=(u^{-1}C)\cdot v^{-1}\Gamma_{df_s}$, i.e. $u^{-1}C$ and $v^{-1}\Gamma_{df_s}$ intersect transversely at a single point. So $C$ and $uv^{-1}\Gamma_{df_s}$ also intersect transversely at a single point.\\

\underline{$\mathcal{F}$ $\mu c \Leftarrow i_*\mathcal{F}$ $\mu c$}: Let $(T, Z',V,f)$ be a ttfam at $(z,\zeta)$ for $\mathcal{F}$. If $\zeta=0$, then $\mathcal{F}$ is locally constant near $z$ and the assertion is clear. Assume $\zeta\neq 0$. By \cite[\nopp 18.1.2]{EGAIV}, we can extend $Z'$ to an \etale neighbourhood $X'$ of $x$ in $X$. By Lemma \ref{owenretraction}, after possibly shrinking $Z'$ and $X'$, there exists an \etale neighbourhood $\tilde{X'}\xrightarrow{\beta}X'$ of $x$ in $X$ and maps $\alpha, r$ satisfying the following diagram:
\[\begin{tikzcd}
	{Z'} & {\tilde{X'}} & {X'}
	\arrow["\alpha"', hook, from=1-1, to=1-2]
	\arrow["\beta"', from=1-2, to=1-3]
	\arrow["r"', curve={height=12pt}, from=1-2, to=1-1]
\end{tikzcd}\]
where $\alpha$ is a closed immersion, $\beta$ is \etale, $r$ is a retraction, and $\beta\alpha$ coincides with the closed immersion $Z'\hookrightarrow X'$. Consider the pullback of $(T, Z',V,f)$ via $r$: 
\[\begin{tikzcd}
	& {\tilde{X'}\times T} & {\tilde{V}} & {z_T} \\
	& {Z'\times T} & V & {z_T} \\
	{} && {\mathbb{A}^1_T}
	\arrow["{r\times \mathrm{id}}"', from=1-2, to=2-2]
	\arrow[Rightarrow, no head, from=1-4, to=2-4]
	\arrow[from=1-3, to=2-3]
	\arrow["f", from=2-3, to=3-3]
	\arrow[hook', from=1-3, to=1-2]
	\arrow[hook', from=2-3, to=2-2]
	\arrow[hook', from=1-4, to=1-3]
	\arrow[hook', from=2-4, to=2-3]
	\arrow["\lrcorner"{anchor=center, pos=0.125, rotate=-90}, draw=none, from=1-3, to=2-2]
	\arrow["{\tilde{f}}"'{pos=0.3}, shift left=1, curve={height=12pt}, from=1-3, to=3-3]
\end{tikzcd}\]

On each slice, $\Tilde{f_s}=f_sr$ is an extension of $f_s$. A similar intersection theoretic computation as above shows that $(T, \Tilde{X'},\Tilde{V},\Tilde{f})$ is a ttfam at $(z,(d\Tilde{f_s})_z)$ for $i_*\mathcal{F}$. Then again by the compatibility of vanishing cycles with proper pushforwards, $\phi_f(\mathcal{F})\cong\phi_{\Tilde{f}}(i_*\mathcal{F})$, and the latter is a local system by assumption.\\

The “More generally” part follows from the same method as above, plus Lemma \ref{conglemma} below.\\

(ii) “$\Rightarrow$” is clear. For “$\Leftarrow$”: The question being \etale local, we may reduce to showing that the pullback of $\mu c$ a sheaf $\mathcal{F}$ along $\mathbb{A}^m\times Z\rightarrow Z$ is $\mu c$. But it equals the restriction to $\mathbb{A}^m\times Z$ of the pullback of $\mathcal{F}$ along $\mathbb{A}^m\times X\rightarrow X$, which is $\mu c$ by (i).
\end{proof}

\begin{lemma}\label{conglemma}
    Let $f: X\rightarrow Y$ be a morphism of schemes, and $y\in Y$. If $g,h\in \mathcal{O}_{y,Y}$ are such that $f\equiv g \mod{\mathfrak{m}^N_{y,Y}}$ for some $N\in \mathbb{N}$, then $g\circ f\equiv h\circ f \mod{\mathfrak{m}^N_{x,X}}$ for every $x\in f^{-1}(y)$.
\end{lemma}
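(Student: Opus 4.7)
The statement as written contains an evident typo (``$f\equiv g$'' should be ``$g\equiv h$''), so I will treat the intended claim: if $g,h\in\mathcal{O}_{y,Y}$ satisfy $g-h\in\mathfrak{m}_{y,Y}^{N}$, then $g\circ f-h\circ f\in\mathfrak{m}_{x,X}^{N}$ for any $x\in f^{-1}(y)$.

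The plan is to reduce the claim to the single observation that $f$ induces a \emph{local} ring homomorphism on stalks, and that any local homomorphism sends powers of the maximal ideal into powers of the maximal ideal. Concretely, for $x\in f^{-1}(y)$, the morphism $f$ gives a ring map $f^{\sharp}:\mathcal{O}_{y,Y}\to\mathcal{O}_{x,X}$, and because $f(x)=y$ this map is local, i.e.\ $f^{\sharp}(\mathfrak{m}_{y,Y})\subseteq\mathfrak{m}_{x,X}$.

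Granting this, the proof is a one-line calculation: since $\mathfrak{m}_{y,Y}^{N}$ is generated (as an ideal) by $N$-fold products of elements of $\mathfrak{m}_{y,Y}$, and since $f^{\sharp}$ is a ring homomorphism, we get $f^{\sharp}(\mathfrak{m}_{y,Y}^{N})\subseteq\mathfrak{m}_{x,X}^{N}$. Applying this to $g-h\in\mathfrak{m}_{y,Y}^{N}$ yields
\[
 g\circ f-h\circ f \;=\; f^{\sharp}(g)-f^{\sharp}(h) \;=\; f^{\sharp}(g-h)\;\in\;\mathfrak{m}_{x,X}^{N},
\]
which is exactly the desired congruence.

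There is no genuine obstacle here; the only thing to be careful about is to record why $f^{\sharp}$ is local, which is just the definition of a morphism of locally ringed spaces applied to the point $x\mapsto y$. So the entire proof fits in a couple of lines and requires no further ingredients.
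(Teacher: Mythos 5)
Your proof is correct and is essentially identical to the paper's: both use the induced local ring homomorphism on stalks and the fact that it carries $\mathfrak{m}_{y,Y}^{N}$ into $\mathfrak{m}_{x,X}^{N}$. You are also right that the hypothesis should read $g\equiv h \bmod \mathfrak{m}_{y,Y}^{N}$.
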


\begin{proof}
    Let $\varphi: \mathcal{O}_{y,Y}\rightarrow\mathcal{O}_{x,X}$ be the induced local ring map. $g\equiv h \mod{\mathfrak{m}^N_{y,Y}}\Rightarrow g\circ f\equiv h\circ f \mod{\varphi(\mathfrak{m}^N_{y,Y})}=\varphi(\mathfrak{m}_{y,Y})^N$, a fortiori $g\circ f\equiv h\circ f \mod{\mathfrak{m}^N_{x,X}}$.
\end{proof}

Like tame sheaves, $\mu c$ sheaves are not stable under general proper pushforwards, however they are stable under pushforwards which resemble (the pushforward part of) an integral transform. More precisely, given a map $f: Y\rightarrow X$ between smooth varieties and $\mathcal{F}\in D(Y)$, we say $f$ is \underline{special with respect to $\mathcal{F}$} if the following conditions are satisfied:\\
    (a) it is smooth and proper;\\
    (b) for any smooth point $(x, \xi)\in SS(f_*\mathcal{F})$ with $\xi\neq 0$, there exists a unique point $(y, \eta)\in (SS\mathcal{F})|_{f^{-1}(x)}$ such that $df(\xi)=\eta$. Furthermore, $(y, \eta)$ is a smooth point of $SS\mathcal{F}$;\\ 
    (c) $f_+SS\mathcal{F}=f_{\circ}SS^+\mathcal{F}$.  Recall (Radon setup \ref{radonsetup}) $SS^{+}\mathcal{F}$ denotes $SS\mathcal{F}\cup T^*_YY$. Here $f_+$ is the map from \emph{cycles} on $T^*Y$ to \emph{cycles} on $T^*X$ defined as follows: take the intersection theoretic pullback and pushforward under the correspondence $T^*Y\leftarrow Y\times_X T^*X\rightarrow T^*X$, then set the coefficient of the zero section to be $1$. In the following we will use a similar notation for pullbacks.

\begin{proposition}[special pushforward]\label{misclemmaspepush}
    Let $f: Y\rightarrow X$ be a morphism of smooth varieties, and $\mathcal{F}$ a $\mu c$ sheaf on $Y$.\\
    (i) If $f$ is special with respect to $\mathcal{F}$, then $f_*\mathcal{F}$ is $\mu c$. More generally, if $f$ is special with respect to $\mathcal{F}$, then for every pair $(x,\xi)$ and $(y,\eta)$ as above, we have $\mathrm{depth}(f_*\mathcal{F})_{(x,\xi)}\leq\mathrm{depth}(\mathcal{F})_{(y,\eta)}$.\\
    (ii) If $f$ is special with respect to $\mathcal{F}$, and $\mathcal{F}$ is $\mu c^s$, then $f_*\mathcal{F}$ is $\mu c^s$.
\end{proposition}

Note, being special implies that the pullback of every ttfun for $f_*\mathcal{F}$ at $(x,\xi)$ with $\xi\neq0$ is a ttfun for $\mathcal{F}$ at $(y,\eta)$ (c.f. the proof of Lemma \ref{radonintersect}).
\begin{proof}
    (i) Let $(x, \xi)$ be a smooth point of $SS(f_*\mathcal{F})$ with $\xi\neq 0$, $(y, \eta)$ be the point in $SS\mathcal{F}$ corresponding to it. Let $(T, U, V, g)$ be a ttfam for $f_*\mathcal{F}$ at $(x, \xi)$. Consider the pullback of this ttfam along $f$:
\[\begin{tikzcd}
	{Y\times T} & {\tilde{U}\times T} & {\tilde{V}} & {y_T} \\
	{X\times T} & {U\times T} & V & {x_T} \\
	{} && {\mathbb{A}^1_T}
	\arrow["f\times \mathrm{id}", from=1-1, to=2-1]
	\arrow[from=1-2, to=2-2]
	\arrow[from=1-2, to=1-1]
	\arrow[from=2-2, to=2-1]
	\arrow["\sim", from=1-4, to=2-4]
	\arrow[from=1-3, to=2-3]
	\arrow["g", from=2-3, to=3-3]
	\arrow[hook', from=1-3, to=1-2]
	\arrow[hook', from=2-3, to=2-2]
	\arrow[hook', from=1-4, to=1-3]
	\arrow[hook', from=2-4, to=2-3]
	\arrow["\lrcorner"{anchor=center, pos=0.125, rotate=-90}, draw=none, from=1-2, to=2-1]
	\arrow["\lrcorner"{anchor=center, pos=0.125, rotate=-90}, draw=none, from=1-3, to=2-2]
	\arrow["h"'{pos=0.3}, shift left=1, curve={height=12pt}, from=1-3, to=3-3]
\end{tikzcd}\]

As $f$ is special with respect to $\mathcal{F}$, for every closed point $s\in T$, the slice $\tilde{V}_s\xrightarrow{h_s}\mathbb{A}^1_s$ satisfies condition (iii) in the definition of a ttfam for $\mathcal{F}$ at $(y, \eta)$. Conditions (i), (ii) are clearly satisfied, so $(T, \tilde{U}, \tilde{V}, h)$ is a ttfam for $\mathcal{F}$ at $(y, \eta)$. Since $\mathcal{F}$ is $\mu c$, $\phi_h(\mathcal{F})$ is a local system. By the compatibility of vanishing cycles (over general bases) with proper pushforwards, we conclude that $\phi_g(f_*\mathcal{F})$ is also a local system.\\

The “More generally” part follows from the same method as above, plus Lemma \ref{conglemma}.\\

(ii) It suffices to check that being special with respect to a sheaf is preserved under smooth pullback. Let $g: W\rightarrow X$ be a smooth map. We have the following diagram: 
\[\begin{tikzcd}
	& Y & {Y_W} \\
	& X & W \\
	{}
	\arrow["g", from=2-3, to=2-2]
	\arrow["f"', from=1-2, to=2-2]
	\arrow["{f'}", from=1-3, to=2-3]
	\arrow["{g'}"', from=1-3, to=1-2]
	\arrow["\lrcorner"{anchor=center, pos=0.125, rotate=-90}, draw=none, from=1-3, to=2-2]
\end{tikzcd}\]

We want to show $f'$ is special with respect to $g'^*\mathcal{F}$.\\

(a): Clear;\\

(b): We need to know the intersection (away from the zero section) of $f'^{\circ}SS(f'_*g'^*\mathcal{F})=f'^{\circ}SS(g^*f_*\mathcal{F})=f'^{\circ}g^{\circ}SS(f_*\mathcal{F})=g'^{\circ}f^{\circ}SS(f_*\mathcal{F})$ and $SS(g'^*\mathcal{F})=g'^{\circ}SS\mathcal{F}$. Clearly, on the fibre $f'^{-1}(x')$, for any $x'\in W$, the intersection is none empty if and only if, on $f^{-1}(g(x'))$, the intersection of $f^{\circ}SS(f_*\mathcal{F})$ and $SS\mathcal{F}$ is nonempty, and if so the intersection is a single smooth point of $SS(g'^*\mathcal{F})$;\\

(c): $f'_+SS(g'^*\mathcal{F})=f'_+g'^+SS\mathcal{F}=g^+f_+SS\mathcal{F}=g^{\circ}f_{\circ}SS^+\mathcal{F}=f'_{\circ}g'^{\circ}SS^+\mathcal{F}=f'_{\circ}SS^+(g'^*\mathcal{F})$, where the second equality comes from the base change formula in intersection theory.
\end{proof}

Recall the notation in Radon setup \ref{radonsetup} for the next corollary and remark.

\begin{corollary}[Radon transform]\label{Radoncompat}
    The Radon transform preserves $\mu c^s$ sheaves: if $\mathcal{F}\in D(\mathbb{P})$ is $\mu c^s$, then $R\mathcal{F}\in D(\mathbb{P}^{\vee})$ is $\mu c^s$.
\end{corollary}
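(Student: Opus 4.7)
The plan is to invoke Proposition~\ref{misclemmaspepush}~ii) with $f = q$ and sheaf $p^*\mathcal{F}$ on $Q$. A first reduction is to observe that $p^*\mathcal{F}$ is itself $\mu c^s$: given any smooth $h: Y \to Q$, we have $h^*p^*\mathcal{F} = (ph)^*\mathcal{F}$, and $ph$ is smooth as a composition of smooth maps, so the $\mu c^s$ hypothesis on $\mathcal{F}$ forces $(ph)^*\mathcal{F}$ to be $\mu c$. Granting that $q$ is special with respect to $p^*\mathcal{F}$, Proposition~\ref{misclemmaspepush}~ii) yields that $q_*p^*\mathcal{F}$ is $\mu c^s$; since $q$ is proper we have $q_! = q_*$, and shifts preserve $\mu c^s$ (as the singular support is shift-invariant), so $R\mathcal{F} = q_!p^*\mathcal{F}[n-1]$ is $\mu c^s$.

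The core of the argument is thus verifying conditions (a), (b), (c) in the definition of a special morphism for the pair $(q, p^*\mathcal{F})$. Condition (a) is immediate: $q$ is a $\mathbb{P}^{n-1}$-bundle over $\mathbb{P}^{\vee}$, hence smooth and proper. For condition (b), given a smooth $(a, \alpha) \in SS(q_*p^*\mathcal{F}) = SSR\mathcal{F}$ with $\alpha \neq 0$, I appeal to Radon setup~\ref{radonsetup}: under the identification $Q \cong PT^*\mathbb{P}^{\vee}$, the codirection of $\alpha$ determines a unique $z \in q^{-1}(a)$, and $\zeta := dq(\alpha)$ is the unique covector at $z$ lifting $\alpha$ under the (injective) cotangent map of the smooth morphism $q$. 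The membership $(z, \zeta) \in SS(p^*\mathcal{F})$, together with uniqueness among points in $SS(p^*\mathcal{F})|_{q^{-1}(a)}$ lifting $\alpha$, is exactly the analysis carried out in the first paragraph of the proof of Lemma~\ref{radonintersect}~i). Smoothness of $(z, \zeta)$ in $SS(p^*\mathcal{F}) = p^{\circ} SS\mathcal{F}$ follows from smoothness of the corresponding $(x, \xi) \in SS\mathcal{F}$---equivalent to smoothness of $(a, \alpha)$ via the equality $PSS\mathcal{F} = PSSR\mathcal{F}$ in $Q$---together with smoothness of $p$.

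Condition (c), the multiplicity statement $q_{+} SS(p^*\mathcal{F}) = q_{\circ} SS^+(p^*\mathcal{F})$, will be the most delicate, but it has essentially been dispatched in the proof of Lemma~\ref{radonintersect}~ii). Concretely, the two assertions proved there---that the inclusion $u: Q \times_{\mathbb{P}^{\vee}} T^*\mathbb{P}^{\vee} \hookrightarrow T^*Q$ and the projection $v$ to $T^*\mathbb{P}^{\vee}$ introduce no multiplicities greater than $1$ under intersection-theoretic pull and push along the correspondence---translate directly into the equality of cycles required by (c). The main obstacle is therefore not a new geometric input but the adaptation of these multiplicity computations uniformly over all of $SS^+(p^*\mathcal{F})$ rather than along the graph of a single ttfun; this adaptation is straightforward given the local pushout description of $T^*_zQ$ in Radon setup~\ref{radonsetup}~ii), which is the structural ingredient driving all the multiplicity bookkeeping.
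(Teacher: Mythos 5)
Your proposal is correct and follows essentially the same route as the paper: the paper's proof is precisely the one-line observation that, by the proof of Lemma~\ref{radonintersect}, $q$ is special with respect to $p^*\mathcal{F}$, after which Proposition~\ref{misclemmaspepush}~ii) applies. Your additional bookkeeping (that $p^*\mathcal{F}$ is $\mu c^s$ via $h^*p^* = (ph)^*$, and that the shift $[n-1]$ is harmless) correctly fills in steps the paper leaves implicit.
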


\begin{proof}
     It suffices to observe that, by the proof of Lemma \ref{radonintersect}, $\q$ is special with respect to $\p^*\mathcal{F}$ for any $\mathcal{F}\in D(\mathbb{P})$.
\end{proof}

\begin{remark}\label{ptwiseradoncompat}
    Note that we actually proved a pointwise statement: if $\mathcal{F}\in D(\mathbb{P})$ is $\mu c^s$ at $(x,\xi)$, $\xi\neq 0$, then $R\mathcal{F}\in D(\mathbb{P}^{\vee})$ is $\mu c^s$ at $(a,\alpha)$, for every $(a,\alpha)$ corresponding to $(x,\xi)$.
\end{remark}

\begin{lemma}[distinguished triangle]\label{misclemmatriang}
    Being $\mu c$ is compatible with distinguished triangles: let $\mathcal{F}\rightarrow \mathcal{G}\rightarrow \mathcal{H}\rightarrow$ be a distinguished triangle, assume $SS\mathcal{G}=SS\mathcal{F}\cup SS\mathcal{H}$, then:\\
    (i) $\mathcal{F}$ and $\mathcal{H}$ $\mu c$ implies $\mathcal{G}$ $\mu c$. More generally, for every smooth point $\nu\in SS\mathcal{G}$, $\mathrm{depth}(\mathcal{G})_{\nu}\leq \mathrm{max}\{\mathrm{depth}(\mathcal{F})_{\nu},$ $\mathrm{depth}(\mathcal{H})_{\nu}\}.$ \\
    (ii) $\mathcal{F}$ and $\mathcal{H}$ $\mu c
    ^s$ implies $\mathcal{G}$ $\mu c^s$.
\end{lemma}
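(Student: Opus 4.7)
The plan is to prove the general statement i$'$), from which i) follows by taking $N=2$. The strategy is to apply the vanishing cycle functor to the distinguished triangle and use the fact that local systems are stable under 2-out-of-3 in distinguished triangles. Given a smooth point $\nu = (x,\xi) \in SS\mathcal{G}$, set $N := \max\{\mathrm{depth}(\mathcal{F})_\nu,\, \mathrm{depth}(\mathcal{H})_\nu\}$, with the convention that the depth is $2$ when $\nu$ lies outside the relevant singular support. For any ttfam $(T,U,V,f)$ of $\mathcal{G}$ at $\nu$ with $f_s\equiv f_{s'}\mod \mathfrak{m}_x^N$ for all closed $s,s'\in T$, the plan is to show $\phi_f(\mathcal{F})$ and $\phi_f(\mathcal{H})$ are local systems; then applying $\Phi_f(-)_V|_{\overleftarrow{T}T}$ to $\mathcal{F}\to\mathcal{G}\to\mathcal{H}\to$ gives a distinguished triangle $\phi_f(\mathcal{F})\to\phi_f(\mathcal{G})\to\phi_f(\mathcal{H})\to$ on $\overleftarrow{T}T$, forcing the middle term to be a local system as well.

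To justify that the given ttfam of $\mathcal{G}$ at $\nu$ is also (effectively) a ttfam for $\mathcal{F}$ and $\mathcal{H}$ at $\nu$, I will exploit the hypothesis $SS\mathcal{G} = SS\mathcal{F}\cup SS\mathcal{H}$: since $SS\mathcal{G}$ is smooth at $\nu$, the unique local component of $SS\mathcal{G}$ through $\nu$ coincides with a local component of $SS\mathcal{F}$ whenever $\nu\in SS\mathcal{F}$ (and similarly for $\mathcal{H}$), so $\nu$ is automatically a smooth point of $SS\mathcal{F}$ (resp. $SS\mathcal{H}$) in that case, and transversality of $\Gamma_{df_s}$ to $SS\mathcal{G}$ at $\nu$ transfers to the smaller singular support. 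After shrinking $V$ one also gets an isolated intersection. If instead $\nu\notin SS\mathcal{F}$, then after further shrinking $f$ becomes $SS\mathcal{F}$-transversal on all of $V$ (including over $x_T$), so $(f,\mathcal{F}_V)$ is ULA and $\Phi_f(\mathcal{F}_V)=0$, making $\phi_f(\mathcal{F})$ trivially a local system; likewise for $\mathcal{H}$.

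The jet condition is preserved verbatim in all cases (the same function $f$ is being used), so by the definition of depth the ttfam produces local systems $\phi_f(\mathcal{F})$ and $\phi_f(\mathcal{H})$, and the distinguished triangle above delivers i$'$). For ii), the argument is purely formal: smooth pullback $p\colon Y\to X$ sends the given distinguished triangle to a distinguished triangle $p^*\mathcal{F}\to p^*\mathcal{G}\to p^*\mathcal{H}\to$, and $SS(p^*\mathcal{G})=p^{\circ}SS\mathcal{G}=p^{\circ}SS\mathcal{F}\cup p^{\circ}SS\mathcal{H}=SS(p^*\mathcal{F})\cup SS(p^*\mathcal{H})$, so the hypothesis of i) applies to the pulled-back triangle and yields $p^*\mathcal{G}$ being $\mu c$.

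The main (minor) obstacle is the bookkeeping in the previous paragraph around points of $SS\mathcal{G}$ that lie in only one of $SS\mathcal{F}$, $SS\mathcal{H}$: one has to check that shrinking the \'etale neighbourhood $U$ and the Zariski open $V$ does not damage the ttfam structure (the resulting slices remain ttfun's for the relevant sheaf), and that the ULA conclusion genuinely yields vanishing of $\Phi_f$ over the whole oriented product; both of these are standard given Remark \ref{rmkttfam} and the base change property of $\Phi$ recorded there, so no serious new input is required.
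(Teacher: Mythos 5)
Your proposal is correct and follows essentially the same route as the paper: the paper's (very terse) proof likewise applies the vanishing-cycle functor to the triangle and invokes Remark \ref{rmkttfam} v), with the hypothesis $SS\mathcal{G}=SS\mathcal{F}\cup SS\mathcal{H}$ serving exactly the role you give it — guaranteeing that a smooth point of $SS\mathcal{G}$ is a smooth point of $SS\mathcal{F}$ (resp.\ $SS\mathcal{H}$) whenever it lies there, so the ttfam transfers. Your write-up simply makes explicit the bookkeeping (germs of the singular supports agreeing at $\nu$, the trivial ULA case when $\nu$ lies outside one of the singular supports, and the smooth-pullback compatibility of $SS$ for ii)) that the paper leaves to the reader.
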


\begin{proof}
These all follow from applying Remark \ref{rmkttfam}.(v), and using the distinguished triangle
\[\begin{tikzcd}
	{\Phi_{f_T}\mathcal{F}_T} & {\Phi_{f_T}\mathcal{G}_T} & {\Phi_{f_T}\mathcal{H}_T} & 
	\arrow[from=1-1, to=1-2]
	\arrow[from=1-2, to=1-3]
	\arrow[from=1-3, to=1-4]
\end{tikzcd}\]
Note that we need the assumption $SS\mathcal{G}=SS\mathcal{F}\cup SS\mathcal{H}$ because otherwise $SS\mathcal{F}$ and $SS\mathcal{H}$ may cancel each other and a smooth point of $SS\mathcal{G}$ may be a nonsmooth point of $SS\mathcal{F}$ or $SS\mathcal{H}$ (see, e.g., Example \ref{cancelss}).
\end{proof}

Statement (i) in the following lemma is well-known, we include a proof for completeness.

\begin{lemma}[purity]\label{lemma_purity}
    Let Z be a smooth closed subvariety of X of $\mathrm{codim} \geq 2$, and $j: U\hookrightarrow X$ the complement. Let $\mathcal{F}=j_!\mathcal{F}_U$ with $\mathcal{F}_U$ a local system. Then:\\
    (i) $SS\mathcal{F}=T^*_ZX \cup T^*_XX$.\\
    (ii) $\mathcal{F}$ is $\mu c^s$.
\end{lemma}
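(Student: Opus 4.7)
The plan is to use Zariski--Nagata purity to extend $\mathcal{F}_U$ across $Z$, then fit $j_!\mathcal{F}_U$ into a distinguished triangle that reduces both claims to previously established $\mu c^s$ properties.

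Since $X$ is smooth (hence regular) and $Z\hookrightarrow X$ has codimension $\geq 2$, Zariski--Nagata purity gives $\pi_1(U)\congto\pi_1(X)$ for \'etale fundamental groups, so $\mathcal{F}_U$ extends to a local system $\mathcal{L}\in D(X)$ with $j^*\mathcal{L}\cong\mathcal{F}_U$. (At the complex level this follows by d\'evissage on amplitude: $j_!$ commutes with truncation triangles, so one reduces to the case of a local system concentrated in a single degree, where the extension is immediate from the purity isomorphism.) The standard localisation triangle $j_!j^*\mathcal{L}\to\mathcal{L}\to i_*i^*\mathcal{L}\to$ then rewrites, after rotation, as
\[ i_*(\mathcal{L}|_Z)[-1]\to j_!\mathcal{F}_U\to \mathcal{L}\to. \]

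For i), one has $SS\mathcal{L}=T^*_XX$ (since $\mathcal{L}$ is a local system) and $SS(i_*(\mathcal{L}|_Z))=T^*_ZX$, the latter by the singular support correspondence for closed immersions applied to the zero section of $T^*Z$ (compare the proof of Proposition \ref{misclemmacloimm}). The triangle immediately yields $SS(j_!\mathcal{F}_U)\subseteq T^*_XX\cup T^*_ZX$. For the reverse inclusion (in the nontrivial case $\mathcal{L}|_Z\neq 0$), reading the triangle in both directions together with the closedness of $SS$ yields $T^*_XX\cup T^*_ZX\subseteq SS(j_!\mathcal{F}_U)$; if $\mathcal{L}|_Z=0$, which forces $\mathcal{L}$ to vanish on the components of $X$ meeting $Z$, the statement is vacuous.

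For ii), I apply Lemma \ref{misclemmatriang} ii) to the same triangle. The hypothesis $SS\mathcal{G}=SS\mathcal{F}\cup SS\mathcal{H}$ is exactly the equality in i). The middle term $\mathcal{L}$ is $\mu c^s$ by Proposition \ref{LSmucs}; the outer term $i_*(\mathcal{L}|_Z)[-1]$ is $\mu c^s$ since $\mathcal{L}|_Z$ is a local system on $Z$ (hence $\mu c^s$ by Proposition \ref{LSmucs}) and closed immersions of smooth varieties preserve $\mu c^s$ by Proposition \ref{misclemmacloimm} ii) (shifts have no effect). Lemma \ref{misclemmatriang} ii) then concludes that $j_!\mathcal{F}_U$ is $\mu c^s$. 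The main potential obstacle is the complex-level extension of $\mathcal{F}_U$ by purity, handled by the d\'evissage above; every other step is a direct assembly of results already in the paper.
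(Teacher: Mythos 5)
Your proof is correct and follows essentially the same route as the paper's: extend $\mathcal{F}_U$ across $Z$ by purity (after d\'evissage to a single cohomological degree), then run the localisation triangle through Lemma \ref{misclemmatriang}, Proposition \ref{misclemmacloimm} and Proposition \ref{LSmucs}. Your explicit rotation of the triangle so that $j_!\mathcal{F}_U$ sits in the middle term, and your spelled-out argument for the reverse inclusion in i), are slightly more careful renderings of what the paper compresses into ``by dimensional reasons,'' but the substance is identical.
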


\begin{proof}
    (i) By induction on amplitudes and the compatibility of $\mu c$ with distinguished triangles, it suffices to deal with the case where $\mathcal{F}_U$ is concentrated in a single degree. By the Theorem of Purity (e.g. \cite[Exp. X]{SGA1}), $\mathcal{F}_U$ extends to some local system (concentrated in a single degree) $\overline{\mathcal{F}}$ on $X$. Consider the exact sequence $0\rightarrow \mathcal{F} \rightarrow \overline{\mathcal{F}} \rightarrow i_*\mathcal{F}_Z \rightarrow 0$. The second and third terms have $SS=T^*_ZX \cup T^*_XX$, so the first term has $SS\subseteq T^*_ZX \cup T^*_XX$. But $\mathcal{F}$ is not locally constant on $Z$ so $SS\mathcal{F}\neq T^*_XX$, so the inclusion is an equality by dimensional reasons.\\
    
    (ii) By induction on amplitudes, we reduce to the case $\mathcal{F}_U$ is concentrated in a single degree. Then it follows from the same exact sequence above and Lemma \ref{misclemmatriang}, 
 proposition \ref{misclemmacloimm}, and Lemma \ref{LSmucs}.
\end{proof}

\begin{corollary}\label{excodim}
    Let Z be a smooth closed subvariety of X of $\mathrm{codim} \geq 2$, $j: U\hookrightarrow X$ the complement, and $\mathcal{F}\in D(X)$. Assume $\mathcal{F}$ is \emph{not} a local system. Then:\\
    (i) $SS\mathcal{F}=T^*_ZX \cup T^*_XX$ if and only if $\mathcal{F}$ is a local system on $U$ and $Z$.\\
    (ii) If $\mathcal{F}$ satisfies the conditions in (i), then $\mathcal{F}$ is $\mu c^s$.
\end{corollary}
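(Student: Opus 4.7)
Both parts of the corollary will be approached via the standard distinguished triangle
\[j_!j^*\mathcal{F}\rightarrow \mathcal{F}\rightarrow i_*i^*\mathcal{F}\xrightarrow{+1}\]
attached to the closed immersion $i: Z\hookrightarrow X$ and the open complement $j$. The strategy is to control the two outer terms via Lemma \ref{lemma_purity} and Proposition \ref{misclemmacloimm}, and to glue via Lemma \ref{misclemmatriang}. For the ``if'' direction of i), assume $\mathcal{F}|_U$ and $\mathcal{F}|_Z$ are local systems. By induction on amplitude (extending Lemma \ref{lemma_purity} i) from the single-degree case to arbitrary bounded complexes using the standard additivity of $SS$ under triangles), one obtains $SS(j_!j^*\mathcal{F})\subseteq T^*_ZX\cup T^*_XX$. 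Since $i^*\mathcal{F}$ is a local system on $Z$, a direct calculation with the correspondence $T^*Z\leftarrow Z\times_XT^*X\rightarrow T^*X$ gives $SS(i_*i^*\mathcal{F}) = T^*_ZX$. The triangle then yields $SS\mathcal{F}\subseteq T^*_ZX\cup T^*_XX$, and the hypothesis that $\mathcal{F}$ is not a local system rules out equality with $T^*_XX$ alone, forcing $SS\mathcal{F} = T^*_ZX\cup T^*_XX$ by dimensional reasons.

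For the ``only if'' direction of i), restricting $SS\mathcal{F} = T^*_ZX\cup T^*_XX$ to $T^*U$ immediately shows $\mathcal{F}|_U$ is a local system. For $\mathcal{F}|_Z$, I would apply Lemma \ref{lemma_purity} i) (again via amplitude induction) to $j_!(\mathcal{F}|_U)$ and read off from the same triangle that $SS(i_*i^*\mathcal{F})\subseteq T^*_ZX\cup T^*_XX$. Since $i_*i^*\mathcal{F}$ is supported on $Z$, its singular support lies in the fiber $T^*X|_Z$, and intersecting with this fiber collapses the inclusion to $SS(i_*i^*\mathcal{F})\subseteq T^*_ZX$. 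Translating through the closed-immersion correspondence then gives $SS(i^*\mathcal{F})\subseteq T^*_ZZ$, i.e.\ $\mathcal{F}|_Z$ is a local system.

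For ii), the conclusions of i) feed back into the same triangle: $j_!j^*\mathcal{F}$ is $\mu c^s$ by Lemma \ref{lemma_purity} ii), and $i_*i^*\mathcal{F}$ is $\mu c^s$ by Proposition \ref{LSmucs} combined with Proposition \ref{misclemmacloimm} ii). The $SS$ computations in i) deliver the equality $SS\mathcal{F} = SS(j_!j^*\mathcal{F})\cup SS(i_*i^*\mathcal{F})$, which is exactly the hypothesis needed to apply Lemma \ref{misclemmatriang} ii), yielding that $\mathcal{F}$ is $\mu c^s$. The main obstacle I anticipate is verifying this as an \emph{equality} rather than a mere inclusion; some care is needed in the degenerate cases $j^*\mathcal{F}=0$ (where $\mathcal{F}=i_*i^*\mathcal{F}$ and the argument collapses onto Proposition \ref{misclemmacloimm}) and $i^*\mathcal{F}=0$ (where $\mathcal{F}=j_!j^*\mathcal{F}$ and one reduces directly to Lemma \ref{lemma_purity}), while in the generic case the non-vanishing of both restrictions together with the codimension hypothesis on $Z$ ensures that both components $T^*_XX$ and $T^*_ZX$ are genuinely present on both sides.
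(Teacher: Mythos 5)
Your proposal is correct and follows essentially the same route as the paper: the recollement triangle $j_!j^*\mathcal{F}\to\mathcal{F}\to i_*i^*\mathcal{F}$, Lemma \ref{lemma_purity} for the open part, the correspondence for the closed immersion to deduce $SS\mathcal{F}_Z\subseteq T^*_ZZ$, and Lemma \ref{misclemmatriang} together with Propositions \ref{LSmucs} and \ref{misclemmacloimm} for ii). Your explicit attention to the degenerate cases ($j^*\mathcal{F}=0$ or $i^*\mathcal{F}=0$) and to the required equality of singular supports is slightly more careful than the paper's terse "clear", but the argument is the same.
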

In real and complex analytic contexts, statement (i) is true without assumptions on the codimension (\cite[\nopp 8.4.1]{kashiwara_sheaves_1990}). In the positive characteristic algebraic context, it is false for codim = 1, see Example \ref{cancelss}.
\begin{proof}
    By the previous lemma, (ii) and “$\Leftarrow$” in (i) are clear. We show “$\Rightarrow$” in (i). Consider the distinguished triangle $j_!\mathcal{F}_U \rightarrow \mathcal{F} \rightarrow i_*\mathcal{F}_Z \rightarrow$. Note $SS(j_!\mathcal{F}_U)$ and $SS\mathcal{F}$ both equal $T^*_ZX \cup T^*_XX$, so $SS(i_*\mathcal{F}_Z)\subseteq T^*_ZX$. As  $SS(i_*\mathcal{F}_Z)=i_{\circ}SS(\mathcal{F}_Z)$, we get $SS(i_*\mathcal{F}_Z)=T^*_ZX$. It follows that $SS(\mathcal{F}_Z)=T^*_ZZ$, so $\mathcal{F}_Z$ is a local system.
\end{proof}

\subsection{Examples}\label{subsection-example}

\begin{example}\label{depthcompute}
    ($p>3$) The sheaves in Examples \ref{ex1} and \ref{ex2} are not $\mu c$ by the computations there and Lemma \ref{phiindepofttfun}. Example \ref{ex2} shows that the $SS$ being Lagrangian does not imply being $\mu c$. We do not know if being $\mu c$ implies the $SS$ being Lagrangian. Furthermore, similar computations at other points show that Example \ref{ex1} is not $\mu c$ anywhere along the divisor. On the other hand, as we show now, Example \ref{ex2} is $\mu c$ everywhere (along the smooth locus of $SS\mathcal{F}$) except above the origin.\\
    
    Consider $((0,1),dx)\in SS\mathcal{F}$, the other points are similar. Change coordinates, we may assume the sheaf is given by $t^p-t=(y+1)/x^{p-1}$ and the point in question is $(x_0,\xi)=((0,0),dx)$. By the same reasoning as in the first paragraph of the proof of Theorem \ref{thmstab}, it suffices to show: for any smooth curves on an \etale open neighbourhood of $x_0$ passing through $x_0$ with conormal at $x_0$ proportional to $dx$, $\mathrm{sw}(C)$ is independent of the curve. By the Implicit Function Theorem, any such curve is of the form $\{x=c_2y^2+c_3y^3+c_4y^4+...\}, c_2\neq0$ in the formal neighbourhood of $x_0$. The restriction of the sheaf is given by Artin-Schreier equation $$t^p-t=\frac{y+1}{(c_2y^2+c_3y^3+c_4y^4+...)^{p-1}}=\frac{y+1}{y^{2p-2}(c_2+c_3y+c_4y^2+...)^{p-1}}$$ which has Swan conductor $2p-2$, independent of $C$. 
\end{example}

In the following, we will use coordinates $[x:y:z]$ on $\mathbb{P}^2$ and $[a:b:c]$ on its dual.

\begin{example}\label{cancelss} ($p>2$, $\mathbb{Z}/\ell$-coefficient)
    Consider the Artin-Schreier sheaf on $\mathbb{P}^2$ determined by the equation $t^p-t=yz^{p-2}/x^{p-1}$, !-extended along $\{x=0\}$. Note, on the affine part $\{[x:y:1]\}$ this is just Example \ref{ex2}. As shown below, $SS\mathcal{F}=T^*_{\mathbb{P}}\mathbb{P}\cup T^*_{\{x=0\}}\mathbb{P}\cup T^*_{[0:0:1]}\mathbb{P}\cup T^*_{[0:1:0]}\mathbb{P}$. It then follows from Radon setup \ref{radonsetup}.(iii) that $SSR\mathcal{F}=T^*_{\mathbb{P}^{\vee}}\mathbb{P}^{\vee}\cup T^*_{\{b=0\}}\mathbb{P}^{\vee} \cup T^*_{\{c=0\}}\mathbb{P}^{\vee}\cup T^*_{[1:0:0]}\mathbb{P}^{\vee}$. Focus on a neighbourhood of the point $[0:1:0]\in \mathbb{P}^{\vee}$. Claim: although $SSR\mathcal{F}$ is the zero section union the conormal to a smooth divisor near this point, $R\mathcal{F}$ is \emph{not} locally constant on the divisor near this point. Indeed, as $a$ varies, the points $[a:1:0]$ correspond to the lines $\{aX+Y=0\}$ on $\mathbb{P}$ and the stalk $(R\mathcal{F})_{[a:1:0]}\cong R\Gamma(\{aX+Y=0\},\mathcal{F})$ has a jump at $a=0$ (which can be seen, for example, by computing the Euler-Poincaré characteristics using Grothendieck-Ogg-Shafarevich).\\
    
    This shows that the same statement as in Corollary \ref{excodim}.(i) for $\mathrm{codim}=1$ is false. This is in sharp contrast with the real and complex analytic cases.
\end{example}

\begin{proof}[Proof of the description for $SS\mathcal{F}$]
    \footnote{It is in general difficult to compute the $SS$. In principal, for non-degenerate sheaves one can apply \cite[\nopp 4.13]{saito_characteristic_2017}, and for rank 1 sheaves on a surface one can apply \cite{yatagawa_characteristic_2020}. In practice, it is often easier to combine these with the theorem of Deligne-Laumon (\cite[\nopp 2.1]{laumon_semi-continuite_1981}, \cite[\nopp 2.12]{saito_characteristic_2017}), the fact that the $SS$ is middle dimensional (\cite[\nopp 1.3]{beilinson_constructible_2016}), and various smooth and proper estimates to find the $SS$. This is what we do for this example.} 
    As $\mathcal{F}$ is of rank 1, by the discussion at the end of \cite[\nopp §2.1]{yatagawa_characteristic_2020}, $(SS\mathcal{F})|_{\{x=0\}}$ is the union of a single line bundle and the whole cotangent spaces at finitely many closed points. We want to show the line bundle is $\langle dx\rangle_{\{x=0\}}$ and the closed points appearing are exactly $[0:0:1]$ and $[0:1:0]$.\\
    
    First consider the affine part $\mathbb{A}^2:=[x:y:1]$. For every $\lambda\in k^{\times}$, let $\lambda^{\frac{1}{p-1}}$ be a $(p-1)$-th root. The Artin-Schreier sheaf corresponding to $t^p-t=\lambda y/x^{p-1}$ is isomorphic to that corresponding to $t^p-t=y/(\lambda^{-\frac{1}{p-1}} x)^{p-1}$. Note the former (resp. latter) is the pullback of $\mathcal{F}|_{\mathbb{A}^2}$ under $[x:y:1]\mapsto [x:\lambda y:1]$ (resp. $[x:y:1]\mapsto [\lambda^{-\frac{1}{p-1}}x:y:1]$). As the $SS$ commutes with smooth pullbacks, we see that, on $\mathbb{A}^2$, the only whole cotangent space that can appear is $T^*_{[0:0:1]}\mathbb{A}^2$.\\
    
    Consider a point $[0:\alpha:1]$, $\alpha\in k^{\times}$. At this point, the function $f=(y-\alpha)^2-x$ has differential $-dx$. We claim that $\phi_f{(\mathcal{F})}_{[0:\alpha:1]}\neq 0$, so $([0:\alpha:1],-dx)$ lies in $SS\mathcal{F}$. Indeed, apply Deligne-Laumon in the form of \cite[\nopp 2.12.1]{saito_characteristic_2017} to the map $X:=\mathbb{A}^2\rightarrow S:=\mathbb{A}^1: [x:y:1]\mapsto ((y-\alpha)^2-x)$, with $Z=\{x=0\}$, in the notation of \textit{loc. cit.} We get $\mathrm{dim}(\phi_f{(\mathcal{F})}_{[0:\alpha:1]})= \mathrm{dimtot}_{\{y=0\}}(\mathcal{F}|_{\{(y-\alpha)^2-x=0\}})- \mathrm{dimtot}_{Z_{\overline{\eta}}}(\mathcal{F}|_{X_{\overline{\eta}}})$, where $\overline{\eta}$ is a geometric point over the generic point of the strict henselisation $\mathbb{A}^1_{(0)}$. Note $Z_{\overline{\eta}}$ consists of two points, at which $\mathcal{F}$ has identical $\mathrm{dimtot}$'s. On the other hand $\mathrm{dimtot}_{\{y=0\}}(\mathcal{F}|_{\{(y-\alpha)^2-x=0\}})=1+\mathrm{sw}_{y=0}(\text{Artin-Schreier corresponding to $\frac{y}{(y-\alpha)^{2p-2}}$})=1+(2p-2)=2p-1$. So $\mathrm{dim}(\phi_f{(\mathcal{F})}_{[0:\alpha:1]})\neq 0$.\\
    
    We have shown so far that $SS\mathcal{F}$ contains the line bundle $\langle dx\rangle_{\{x=0\}}$. A similar computation using the test function $g=\frac{y}{x+1}$ (resp. $h=\frac{z}{x+1}$) shows $([0:0:1],dy)$ (resp. $([0:1:0],dz)$) lies in $SS\mathcal{F}$. Since $(SS\mathcal{F})|_{\{x=0\}}$ is the union of a single line bundle and the whole cotangent spaces at finitely many closed points, it must contain the whole cotangent spaces at $[0:0:1]$ and $[0:1:0]$. This concludes the proof.
\end{proof}

\begin{example}\label{tamenotradon}($p>2$)
    Let $Z\hookrightarrow \mathbb{P}^2$ be the closed subscheme with equation $z^{p-1}y=x^p$. Let $\mathcal{F}$ be the constant sheaf on $Z$, $*$-extended to $\mathbb{P}$. As shown below, $SS\mathcal{F}=\overline{T^*_{Z-[0:1:0]}\mathbb{P}}\cup T^*_{[0:1:0]}\mathbb{P}$, and  $SSR\mathcal{F}=T^*_{\mathbb{P}^{\vee}}\mathbb{P}^{\vee}\cup T^*_{\{b=0\}}\mathbb{P}^{\vee}\cup C$, where $C$ is the closure in $T^*\mathbb{P}^{\vee}$ of $\{([0:b:1],\langle\frac{1}{b^{1/p}}da+\frac{1}{b}db\rangle), \mathrm{all}\,\,b\}$ (for $b=0$ this means $([0:0:1],\langle db\rangle)$).\\
    
    By Remark \ref{LSmucs}, Proposition \ref{misclemmacloimm} and Remark \ref{ptwiseradoncompat}, $R\mathcal{F}$ is $\mu c^s$ except possibly along $\{b=0\}$. And $R\mathcal{F}$ has wild ramification along $\{a=0\}$ because $(SSR\mathcal{F})|_{\{a=0\}}$ is not conormal to $\{a=0\}$ generically. This shows: (a) being tame is not stable under proper pushforwards; (b) $\mu c$, $\mu c^s$ sheaves can have wildly ramifications.

\begin{proof}[Proof of the description for $SS\mathcal{F}$ and $SSR\mathcal{F}$]
    For $SS\mathcal{F}$, as $Z$ is smooth except at $[0:1:0]$, $(SS\mathcal{F})|_{\mathbb{P}-[0:1:0]}=T^*_{Z-[0:1:0]}(\mathbb{P}-[0:1:0])$. It remains to show $T^*_{[0:1:0]}\mathbb{P}$ lies in $SS\mathcal{F}$. Consider the function $f: [x:1:z]\mapsto x$. Its differential at $[0:1:0]$ is $dx$, and $\phi_f(\mathcal{F})|_{[0:1:0]}=\mathrm{cone}(\mathbb{Z}/{\ell^n}\xrightarrow{\mathrm{diagonal}} (\mathbb{Z}/{\ell^n})^{\oplus (p-1)})\neq 0$. So $([0:1:0], dx)\in SS\mathcal{F}$. As $([0:1:0], dx)\notin \overline{T^*_{Z-[0:1:0]}\mathbb{P}}$, and $SS\mathcal{F}$ is middle dimensional, the conclusion follows.\\
    
    For $SSR\mathcal{F}$, we apply Radon setup \ref{radonsetup}.(iii). The only non-immediate part is that, under Radon setup \ref{radonsetup}.(iii), $\overline{T^*_{Z-[0:1:0]}\mathbb{P}}$ corresponds to $C$. The compuation is as follows: we have $T^*_{Z-[0:1:0]}(\mathbb{P}-[0:1:0])=\{([x:x^p:1],\langle dy\rangle),\mathrm{all}\,\, x\}$. A point $[x:x^p:1]$ corresponds to the line $xa+x^pb+c=0$ in $\mathbb{P}^{\vee}$, and a point $[a:b:c]$ on this line corresponds to the codirection $\langle adx+bdy\rangle$ at $[x:x^p:1]$. Since $\langle adx+bdy\rangle=\langle dy\rangle$ if and only if $a=0$ (hence $b\neq 0$), we conclude that $\overline{T^*_{Z-[0:1:0]}\mathbb{P}}$ corresponds to the closure of $\{[0:\frac{1}{-x^p}:1],\langle xda+x^pdb \rangle,\mathrm{all\,\,nonzero}\,\, x\}$, which is $C$ above.
\end{proof}
\end{example}

\section{Appendix}
We list some analogies and contrasts among the following contexts from the microlocal perspective. These are well-known to experts. We hope they can nevertheless be useful to other readers.\\\\
(i) Ét.: bounded constructible complexes of \etale $\mathbb{Z}/\ell^n$-sheaves on smooth algebraic varieties over algebraically closed fields of positive characteristic $p\neq \ell$;\\ 
(ii) Dist.: complex valued tempered distributions on $\mathbb{R}^n$;\\
(iii) $D$-mod$_{\mathrm{h}}$: bounded holonomic complexes of algebraic $D$-modules on smooth complex algebraic varieties;\\ 
(iv) $\mathbb{C}$-ana.: bounded $\mathbb{C}$-constructible complexes of $\mathbb{C}$-sheaves on complex analytic manifolds. By Riemann-Hilbert this is equivalent to bounded regular holonomic complexes of analytic $D$-modules.\\\\
\underline{6-functor formalisms}: All except Dist. have 6-functor formalisms.
Special features:\\
Ét.: the subclass of tame sheaves is not preserved under (proper) pushforwards;\\
Dist.: having polynomial growth is not preserved under integrations;\\
$D$-mod$_{\mathrm{h}}$: subclass of regular holonomic $D$-modules is  stable under 6-functors.\\\\
\underline{The singular support ($SS$) and characteristic cycle ($CC)$:} the $SS$ and $CC$ are defined for Ét., $D$-mod$_{\mathrm{h}}$ and $\mathbb{C}$-ana. The $CC$ satisfies index formulas. The $SS$ is also defined for Dist. (which are called wavefront sets). The $SS$'s are closed conical subsets in $T^*X$. Special features:\\
Ét.: the $SS$'s are middle dimensional;\\
Dist.: no special feature;\\
$D$-mod$_{\mathrm{h}}$: the $SS$'s are Lagrangian; for general coherent (not necessarily holonomic) $D$-modules the $SS$'s are coisotropic;\\
$\mathbb{C}$-ana.: the $SS$'s are Lagrangian.\\\\
\underline{Fourier transforms:} All of them have Fourier transforms (on $X=\mathbb{A}^n$). Special features:\\
Ét., Dist., $D$-mod$_{\mathrm{h}}$: equivalence on the whole category;\\
$\mathbb{C}$-ana.: not an equivalence on the whole category but becomes an equivalence after restriction to conic sheaves.\\\\
\underline{Microlocal data:}\\
Ét.: large data contained in wild ramifications (in dimension one: representation of local Galois groups);\\
Dist.: large data contained in (essential) singularities\footnote{E.g., Great Picard's Theorem: at an essential singularity $x$ of a complex analytic function $f$, in every punctured neighbourhood of $x$, $f$ takes all complex values infinitely many times, with at most one exception.};\\
$D$-mod$_{\mathrm{h}}$: large data contained in irregular singularities (in dimension one: Stokes data); for general analytic $D$-modules, microlocalisation can be carried out and is the content of the theory of algebraic analysis (microfunctions, microdifferential operators...);\\
$\mathbb{C}$-ana.: relatively small data, microlocalisation can be carried out.\\\\
\underline{Extension properties:}\\
Ét.: fix $\mathbb{A}^1_{k,(0)}-\{0\}\rightarrow\mathbb{G}_{m,k}\hookrightarrow\mathbb{A}^1_k\hookrightarrow\mathbb{P}^1_k$. Given a local system on $\mathbb{A}^1_{k,(0)}-\{0\}$, there are in general many ways to extend it to a local system on $\mathbb{G}_{m,k}$. However, there exists a unique extension (up to isomophism) which is \underline{special} (\cite{katz_local--global_1986});\\
Dist.: given a smooth function on a small punctured disk at the origin of $\mathbb{R}$, there are many ways to extend to a smooth function on $\mathbb{R}-\{0\}$;\\
$D$-mod$_{\mathrm{h}}$: fix $D^\circ\rightarrow\mathbb{G}_{m,\mathbb{C}}\hookrightarrow\mathbb{A}^1_\mathbb{C}\hookrightarrow\mathbb{P}^1_\mathbb{C}$, where $D^\circ$ is the punctured formal disk at the origin. Given a vector bundle with a flat connection on $D^\circ$, there are in general many ways to extend it to a vector bundle with a flat connection on $\mathbb{G}_{m,\mathbb{C}}$. However, there exists a unique extension (up to isomophism) which is \underline{special} (\cite[\nopp II.2.4]{katz_calculation_1987});\\
$\mathbb{C}$-ana.: there is a unique way to extend a local system on a punctured small disk at the origin of $\mathbb{C}$ to a local system on $\mathbb{C}^{\times}$.
\newpage
\printbibliography
\Addresses

\end{document}